\newcommand{\p}{\partial}
\newcommand{\C}{\mathbb C}
\newtheorem{theorem}{Theorem}[section]
\newtheorem{lemma}[theorem]{Lemma}
\newtheorem{definition}[theorem]{Definition}
\newtheorem{example}[theorem]{Example}
\theoremstyle{corollary}
\newtheorem{conjecture}[theorem]{Conjecture}
\theoremstyle{conjecture}
\theoremstyle{assumption}
\newtheorem{question}[theorem]{Question}
\theoremstyle{proposition}
\newtheorem{proposition}[theorem]{Proposition}
\theoremstyle{remark}
\newtheorem{remark}[theorem]{Remark}
\numberwithin{equation}{section}
\DeclareMathOperator{\Id}{Id}
\DeclareMathOperator{\Val}{Val}
\DeclareMathOperator{\Spec}{Spec}
\DeclareMathOperator{\Proj}{Proj}
\newcommand{\ols}[1]{\mskip.5\thinmuskip\overline{\mskip-.5\thinmuskip {#1} \mskip-.5\thinmuskip}\mskip.5\thinmuskip}
\newcommand{\ii}{\ensuremath{\sqrt{-1}}}
\newcommand{\pp}{\bar\partial}
\newcommand{\oo}{\mathcal O}
\newcommand{\ca}{\mathcal C}
\begin{document}

\title{K\"ahler-Ricci shrinkers and Fano fibrations}

\author{Song Sun}
\address{Institute for Advanced Study in Mathematics, Zhejiang University, Hangzhou 310058, China}
\address{
Department of Mathematics, University of California, Berkeley, CA 94720, U.S.A.} 
\email{songsun1987@gmail.com}

\author{Junsheng Zhang}
\address{Simons Laufer Mathematical Sciences Institute\\
17 Gauss Way\\ Berkeley, CA, USA, 94720\\}
\curraddr{}
\email{jszhang@berkeley.edu}
\thanks{This work is completed at the SLMath (MSRI) during Fall 2024 (partially supported by the NSF Grant DMS-1928930). We thank Yuji Odaka for the helpful discussion and comments. The second author also thanks the IASM at Zhejiang University for the hospitality during his visit in Spring 2024, where part of this work was done. The authors thank Carlos Esparza for sharing his draft and Yu Li for bringing to our attention the question of the simply-connectedness of Kähler-Ricci shrinkers.}

\begin{abstract}
In this paper, we build connections between  K\"ahler-Ricci shrinkers, i.e., complete (possibly non-compact) shrinking gradient K\"ahler-Ricci solitons, and algebraic geometry. In particular, we 
\begin{itemize}[leftmargin=*]
\item prove that a K\"ahler-Ricci shrinker is naturally a quasi-projective variety, using birational algebraic geometry;
\item  formulate a conjecture relating the existence of K\"ahler-Ricci shrinkers and K-stability of polarized Fano fibrations, which unifies and extends the  YTD type conjectures for K\"ahler-Einstein metrics, Ricci-flat K\"ahler cone metrics and compact K\"ahler-Ricci shrinkers;
\item   formulate conjectures connecting tangent flows at singularities of K\"ahler-Ricci flows and algebraic geometry, via a 2-step degeneration for the weighted volume of a Fano fibration.
\end{itemize}
 \end{abstract}

\maketitle

\section{Introduction}

A K\"ahler-Ricci shrinker (i.e., a complete shrinking gradient K\"ahler-Ricci soliton)  consists of the data $(X, J, \omega, f)$, where $(X, J,\omega)$ is a complete K\"ahler manifold, and $f$ is a real-valued function on $X$ such that $J\nabla f$ is a holomorphic Killing field and the following equation holds
\begin{equation}\label{e:1.1}
	\mathrm{Ric}(\omega)+\sqrt{-1}\p\bar\p f=\omega.
\end{equation}
The vector field $J\nabla f$ is referred to as the \emph{soliton vector field}.

 Our first goal in this paper is to prove the following general result, which is a first step towards connecting K\"ahler-Ricci shrinkers with algebraic geometry.

\begin{theorem}\label{t:main1}
	A K\"ahler-Ricci shrinker is naturally a quasi-projective variety.
\end{theorem}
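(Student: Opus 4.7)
The plan is to combine the positivity of $-K_X$ that falls out of the shrinker equation with the holomorphic torus action generated by the soliton vector field, and then to upgrade the resulting analytic picture to a genuine algebraic structure via birational geometry of an asymptotic model at infinity.

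First, I would observe that \eqref{e:1.1} endows $K_X^{-1}$ with a natural Hermitian metric of \emph{positive} curvature: interpreting $\omega^n$ as a metric on $K_X^{-1}$ and multiplying by $e^{-f}$, the curvature of the resulting metric is $\mathrm{Ric}(\omega)+\sqrt{-1}\partial\bar\partial f=\omega>0$. Second, the Killing field $J\nabla f$ generates an isometric $\mathbb{R}$-action whose closure in $\Aut(X,J,\omega)$ is a compact torus $T$, with complexification $T_{\C}$ acting holomorphically on $X$; a natural generator has real part equal to the gradient flow of $f$. Since sublevel sets $\{f\le c\}$ are compact and the downward gradient flow retracts $X$ onto the compact minimum locus $Z=\{f=f_{\min}\}$, the manifold $X$ is an equivariant thickening of $Z$ by $T_{\C}$.

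Next, running H\"ormander $L^2$-estimates with weights $e^{-kf}$ against the positively curved bundle $K_X^{-k}$ should produce a rich supply of $T$-equivariant holomorphic sections. Decomposing by $T$-weight yields a graded ring
\[
R \;=\; \bigoplus_{k\ge 0} H^0_{L^2}\!\bigl(X,\,K_X^{-k}\bigr),
\]
each of whose weight pieces is finite-dimensional thanks to the compactness of sublevel sets. Standard $\pp$-extension arguments then show that these sections separate points and tangents on every sublevel set, and so assemble into an equivariant holomorphic map from $X$ into a projective ind-scheme.

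The heart of the argument, and where I expect the main obstacle, is to promote this analytic map into a quasi-projective structure on $X$ itself. My plan here is to pass to an asymptotic model at infinity: a Type-I rescaling along the Ricci flow associated to the shrinker should produce a Ricci-flat K\"ahler cone $C$ with Reeb vector field $J\nabla f$, which by Donaldson--Sun-type theory is a normal affine variety. One then aims to realise $X$ as a $T_{\C}$-equivariant partial resolution (or relative birational model) of $C$, degenerating to it under the gradient flow. Finite generation of $R$, and the identification of $X$ with the resulting quasi-projective $T_{\C}$-variety, should follow from an equivariant MMP argument for Fano-type varieties with torus action, in the spirit of the ``Fano fibration'' picture advertised in the title. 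The most delicate ingredient is verifying that this asymptotic cone is genuinely algebraic in the non-compact shrinker setting, where the compactness tools available in the Donaldson--Sun setup must be replaced by weighted analysis tied directly to the soliton potential $f$.
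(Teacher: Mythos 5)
Your proposal pivots on constructing an asymptotic Ricci-flat K\"ahler cone $C$ at infinity via Type-I rescaling and then realising $X$ as a partial resolution of $C$. This is precisely the step that does \emph{not} go through without additional geometric hypotheses, and it is the obstruction the paper is designed to circumvent. For a general non-compact K\"ahler-Ricci shrinker there is no a priori curvature or volume control at infinity; the existence of a well-defined asymptotic cone is an open problem, stated in the paper as Conjecture \ref{C:asymp conicity}, and the statement that the shrinker is asymptotically conical is known only under extra assumptions such as quadratic curvature decay (see the remark after Theorem \ref{t:main}, citing \cite{ConlonDS}). Donaldson--Sun theory as you invoke it requires non-collapsed Gromov--Hausdorff limits with Ricci bounds, none of which are available here. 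So the ``heart of the argument'' in your plan rests on an unproven premise that is arguably as hard as the theorem itself.

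The related gap is at the $L^2$ stage: running H\"ormander estimates against $K_X^{-k}$ with weight $e^{-kf}$ does produce sections, but to separate points and tangents globally and to conclude finite generation of the weight pieces you need some a priori pseudoconvexity or properness of a holomorphic map, which is exactly what is missing without the asymptotic model. The paper resolves both problems simultaneously and in a fundamentally different way: it perturbs the soliton field to a rational $\eta$ with proper moment map $u$, performs K\"ahler reduction at each regular value $z$ to get polarized klt log pairs $(X_z,D_z,L_z)$, tracks the birational wall-crossings (Types I--IV in the variation of GIT) as $z$ crosses critical values, and uses Birkar's boundedness theorem for Fano-type varieties to deduce a uniform bound on the orbifold orders, hence compactness of the critical set $Q$ (Proposition \ref{p:compact critical set}). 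Only then, with $Q$ compact and the reductions stabilised, does it feed a semi-ample $L$ into Kawamata base point freeness to build an affine cone $Y'$, obtain weak pseudoconvexity on preimages of neighborhoods of the vertex, and run H\"ormander to embed $X$ equivariantly into $Y' \times \mathbb{CP}^N$. The algebraic input (Birkar) replaces what you hoped to get from metric geometry at infinity; without it, your argument cannot close.
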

It follows from the proof that a K\"ahler-Ricci shrinker naturally defines a \emph{polarized Fano fibration} (see  Definition \ref{d:polarized Fano}). Theorem \ref{t:main1} answers an open question in \cite[Section 7.2]{ConlonDS}. Unlike typical approaches on similar compactification problems that use holomorphic functions and sections via analysis, which often require extra geometric assumptions at infinity,  our proof uses instead birational algebraic geometry, specifically Birkar's boundedness theorem for Fano-type varieties \cite{Birkar2}.

Given Theorem \ref{t:main1}, the second goal of this paper is to make precise a general conjecture relating the existence of K\"ahler-Ricci shrinkers and stability in algebraic geometry.
\begin{conjecture}\label{c:1-1}
	A polarized Fano fibration $(\pi: X\rightarrow Y, \xi)$ admits a K\"ahler-Ricci shrinker, which is unique up to the action of $\mathrm{Aut}(X, \xi)$ (the group of automorphisms of $X$ preserving $\xi$),  if and only if it is K-polystable. \end{conjecture}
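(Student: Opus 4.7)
The plan is to build a weighted variational framework for polarized Fano fibrations, generalizing the Berman--Boucksom--Jonsson program for Fano manifolds and its weighted extension (by Han--Li) for compact Kähler-Ricci shrinkers. The central object is a weighted Ding functional $D_\xi$, defined on a suitable space of $\xi$-invariant, relatively bounded Kähler potentials on the polarization class of $\pi\colon X\to Y$, whose critical points are precisely Kähler-Ricci shrinkers. First I would set up this functional-analytic framework in the possibly non-compact, relative setting: define relative pluripotential energies $(E_\xi, J_\xi, I_\xi)$ via a combination of finite-dimensional reduction to the base $Y$ and weighted Monge-Ampère operators along the fibers, and then verify the Berndtsson-type convexity of $D_\xi$ along weak geodesic segments adapted to the Fano fibration structure.

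For the easy direction, given any $\xi$-compatible test configuration $(\mathcal{X},\mathcal{L},\eta)$ for $(\pi\colon X\to Y,\xi)$, the plan is to construct a geodesic ray in the space of $\xi$-invariant potentials and to show that its asymptotic $D_\xi$-slope equals a weighted non-Archimedean Ding / Donaldson-Futaki-type invariant attached to the test configuration. Using Theorem \ref{t:main1} to place the shrinker inside this algebro-geometric framework, it is then a minimizer of $D_\xi$, whence the slope is non-negative; strict positivity on non-product configurations follows from strict geodesic convexity of $D_\xi$, and the same convexity delivers the uniqueness statement modulo $\mathrm{Aut}(X,\xi)$.

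For the hard direction I would follow the implication chain: K-polystability $\Rightarrow$ uniform Ding-stability $\Rightarrow$ coercivity of $D_\xi$ modulo $\mathrm{Aut}(X,\xi)$ $\Rightarrow$ existence of a (possibly singular) minimizer $\Rightarrow$ smoothness. The first two implications should proceed by transposing the non-Archimedean pluripotential theory of BBJ to the Berkovich analytification of the polarized Fano fibration, so that test configurations are replaced by non-Archimedean metrics on the polarization and the Ding energy admits a non-Archimedean realization. Existence of a weak minimizer would then follow from lower semicontinuity, and the regularity step requires a priori estimates and a partial $C^0$-type estimate in the weighted, non-compact setting, plausibly run through a continuity method along an auxiliary family of perturbed shrinker equations.

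The main obstacle is almost certainly the regularity step in the hard direction. In the compact Fano case, the partial $C^0$ conjecture, now a theorem of Donaldson-Sun, is decisive; for non-compact $X$ with a non-trivial base $Y$ one must control the asymptotic geometry uniformly along degenerations without the benefit of a compact ambient space, and must ensure that Gromov-Hausdorff/algebraic limits of shrinker-type metrics remain polarized Fano fibrations in the sense of Definition \ref{d:polarized Fano}. A substantial secondary challenge is the proper definition of the non-Archimedean invariants and Berkovich analytifications in the relative setting, so that the Archimedean notion of K-polystability (via $\xi$-compatible test configurations of the fibration) matches the non-Archimedean Ding-stability required by the variational method.
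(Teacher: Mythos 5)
The statement you were asked to prove is \emph{not} proved in the paper: Conjecture \ref{c:1-1} is formulated precisely as a conjecture, and one of the main contributions of the paper is to make the statement well-posed (via Theorem \ref{t:main1}, Definition \ref{d:polarized Fano}, the weighted volume $\mathbb{W}$, and the notion of K-stability for polarized Fano fibrations in Section \ref{s:K-stability}), not to prove it. The paper even emphasizes that the hard direction requires an appropriate notion of weak solution in the sense of pluripotential theory (see the discussion around Conjecture \ref{YTD type conj}) and that the metric geometry (asymptotic conicity, Conjecture \ref{C:asymp conicity}) is itself open. So there is no ``paper's own proof'' to compare against; what you have produced is a research program, and it should be evaluated as such.

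As a research program, your outline is consistent with the route the paper itself anticipates: extending the variational approach of Berman--Boucksom--Jonsson and its weighted version due to Han--Li, with non-Archimedean K-/Ding-stability on one side and coercivity of a weighted Ding functional on the other. But you should be aware that several steps you state as intermediate lemmas are themselves open and likely as hard as the conjecture. In particular: (i) you have not defined the space of $\xi$-invariant potentials in the non-compact relative setting, nor specified the cohomological normalization; the paper's candidate weak solution is a current with locally bounded potential whose $\xi$-Hamiltonian is proper and bounded below, and even in the smooth case it is not a priori clear that such a weak solution is a complete metric (the paper flags exactly this). (ii) The claim ``K-polystability $\Rightarrow$ uniform Ding-stability'' encodes, in the relative setting, a strong finite-generation/higher-rank-degeneration theorem that in the Fano case required the full force of \cite{LXZ} and in the Fano cone case \cite{LWu}; you would need a new such theorem for Fano fibrations, which the paper explicitly lists as a difficulty (its Remark after the K-stability definition). (iii) The partial-$C^0$/regularity step has no known analogue when $Y$ is positive-dimensional and $X$ is non-compact with unknown asymptotics; you acknowledge this, but it is worth stressing that this is where the argument would presently fail, since one cannot yet control Gromov--Hausdorff limits of minimizing sequences or verify they remain polarized Fano fibrations. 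In short, your proposal is a plausible sketch of the expected strategy, but it should not be read as a proof: each of the three chains above is itself a conjecture at the level of difficulty of the statement you are aiming at.
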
 
The notion of K-stability for polarized Fano fibrations will be defined in Section \ref{s:K-stability}. In general, we allow $X$ to have singularities and thus, an appropriate notion of a \emph{weak} solution is needed in the sense of pluripotential theory; see Section \ref{sec--ytd type} for more detailed discussion on this. Conjecture \ref{c:1-1} sets a uniform ground for the Yau-Tian-Donaldson type conjectures for Fano type situations studied previously, including positive K\"ahler-Einstein metrics, Ricci-flat K\"ahler cones, and compact K\"ahler-Ricci shrinkers.  Along the way, we introduce a local invariant, the \emph{weighted volume} $\mathbb W$, for Fano fibrations. This unifies the normalized volume in \cite{Li2015} for a klt singularity and the $\tilde \beta$-invariant in \cite{HL2024} for a Fano variety; see the discussion at the end of Section \ref{s:weighted volume}.

The Yau-Tian-Donaldson type conjecture in general predicts that the existence of canonical K\"ahler metrics is equivalent to certain stability notions in algebraic geometry. The aim is to relate the question of solving a difficult non-linear partial differential equation to more amenable algebro-geometric numerical conditions. 

For K\"ahler-Einstein metrics on Fano manifolds the Yau-Tian-Donaldson conjecture was proved in \cite{CDS}, which provides an algebro-geometric criterion in terms of K-polystability. The easier direction that the existence of K\"ahler-Einstein metrics implies K-polystability was previously known by  \cite{Berman2016} for general Fano varieties.   
One follow-up research direction focused on extending the result of \cite{CDS} to Fano varieties.  This was established for $\mathbb Q$-Gorenstein smoothable Fano varieties by \cite{SSY}. Based on the variational approach initiated in \cite{BBJ},  it is established in \cite{Li2022} the case of general Fano varieties satisfying a stronger notion of reduced uniform K-stability. The algebro-geometric study of K-stability of Fano varieties has seen many substantial progress recently, using deep techniques from birational geometry such as  \cite{HMX} and  \cite{Birkar19, Birkar2021}. In particular, it is proved in \cite{LXZ} that K-polystability of a Fano variety is equivalent to reduced uniform K-stability, which then completes the algebro-geometric aspect for general Fano varieties. On the differential geometric side however, there are still intriguing questions on the geometric behavior of the singular K\"ahler-Einstein metrics and linking them to metric geometry (for example RCD spaces); there are important recent progress \cite{GPSS, Szekelyhidi}.
For Ricci-flat K\"ahler cone metrics on Fano cones (or Sasaki-Einstein metrics), the YTD type conjecture, formulated in \cite{CollinsSz2015}, was proved in \cite{CollinsSz} for isolated singularities, and in \cite{Li2021} for general Fano cones.

K\"ahler-Ricci shrinkers are natural generalizations of both positive K\"ahler-Einstein metrics (where $f=0$) and of Ricci-flat K\"ahler cones (where $\mathrm{Ric}=0$). In the compact setting, it was proved by \cite{TianZ1} that the soliton vector field is a holomorphic invariant of the Fano manifold which minimizes a strictly convex functional and it was proved in \cite{TianZ2,TianZ1} that K\"ahler-Ricci shrinkers when exist are unique up to automorphisms. A YTD type conjecture for compact K\"ahler-Ricci shrinkers is known to hold due to the work of \cite{BermanW} and  \cite{DaSz} (see also \cite{CSW}). Generalizations to singular Fano varieties are proved by \cite{BermanW, HL2023, BLLZ2023}.

In the non-compact setting a systematic study of K\"ahler-Ricci shrinkers was initiated in \cite{ConlonDS}. In particular, it was realized, using the Duistermaat-Heckman localization formula in symplectic geometry, that the soliton vector field still can be determined by a weighted volume minimization principle in suitable settings. This reveals the ``Fano" aspects of non-compact K\"ahler-Ricci shrinkers, which behave better than the more extensively studied case of Ricci-flat K\"ahler metrics. It also allows some partial classification theory of K\"ahler-Ricci shrinkers. 
In \cite{ConlonDS} a preliminary attempt was made to relate the existence of K\"ahler-Ricci shrinkers to algebro-geometric stability. A complete classification of non-compact K\"ahler-Ricci shrinkers in complex dimension 2 is known, due to the works in \cite{ConlonDS, CCD, BCCD, LiWang}.  Conjecture \ref{c:1-1} incorporates the YTD type conjecture for all the above special cases.

The third goal of this paper is to describe a picture for understanding singularities of K\"ahler-Ricci flow space-time in terms of local algebraic geometry of Fano fibrations (see Definition \ref{def:Fano fibration}), extending the  local singularity theory in \cite{DS2}. The general scheme is to connect tangent cones of singularities in canonical differential geometric objects with algebraic geometry. Tangent cones, arising from rescalings and often showing natural symmetries, are fundamental in studying singularities in geometric PDEs. However, their uniqueness, independent of rescaling subsequences, remains a well-known open problem. In \cite{DS2}  the uniqueness of tangent cones of non-collapsed singularities of polarized K\"ahler-Einstein metrics was established by drawing connections with algebraic geometry. More importantly, \cite{DS2} gives an algebro-geometric description of the unique tangent cone through a \emph{2-step degeneration theory} in terms of a valuation canonically defined by the metric. The latter is conjectured in \cite{DS2} to be an algebro-geometric invariant and to lead to a local stability theory for algebraic singularities. \cite{Li2015}
related this picture to a generalized volume minimization in Sasaki geometry and reformulated this
conjecture using more algebro-geometric terminologies. The conjecture was confirmed in \cite{LiXu2018,LWX} and the algebro-geometric aspect, referred to as the stable degeneration conjecture, is further extended and developed by many people (see for example \cite{Blum, BlumLiu, Xu2020, Xu-Zhuang2021, XZ, Xu-Zhuang2024} and the references therein). 

A parabolic generalization of this picture is the setting of singularities of non-collapsed K\"ahler-Ricci flow space-time.  Natural examples include finite time singularities and ancient solutions of K\"ahler-Ricci flow and the static space of a singular Ricci-flat K\"ahler metric. The corresponding tangent flows are exactly K\"ahler-Ricci shrinkers (with possible singularities). 
On a Fano manifold, it was discovered by  \cite{CSW} that the long-time behavior of the \emph{normalized} K\"ahler-Ricci flow, when the initial class is proportional to the canonical class, generates new algebro-geometric degenerations for Fano manifolds. In particular, it was proved that any long-time solution to the normalized Ricci flow on a Fano manifold leads to a 2-step degeneration theory (similar to \cite{DS2}) to a unique K\"ahler-Ricci shrinker on a Fano variety. It was conjectured in \cite{CSW} that this degeneration is independent of the initial data hence inducing an optimal degeneration which is an algebro-geometric invariant of a Fano manifold. This conjecture was confirmed by \cite{HL2024} for smooth Fano manifolds, based on earlier work of \cite{He} and \cite{DeSz}. Further algebro-geometric aspect of the conjecture on general Fano varieties was proved in \cite{BLLZ2023}.

In this paper we formulate a series of conjectures that would yield the uniqueness of tangent flows of non-collapsed K\"ahler-Ricci flow space-time, in terms of a 2-step degeneration picture for the \textit{weighted volume} minimization; see Section \ref{section--conjectures}. It is very likely that the deep work of \cite{Birkar19,Birkar2021,Birkar,Birkar2} combined with
suitable extensions of the previous techniques \cite{Xu2020,BLX2022,HL2023,BLLZ2023,LWX,XZ} could be used to solve the algebro-geometric aspects of these conjectures. Moreover, we expect the \textit{weighted volume}, as a local invariant for Fano fibrations, to play a role in the study of boundedness questions in birational geometry. We leave these for future study.

\section{Algebro-geometric set-up}
In this section, we introduce the notions of a Fano fibration and a polarized Fano fibration. These provide the algebro-geometric set-up for our discussion later in this paper. 
\subsection{Fano fibrations}
 In this paper, a variety is always a quasi-projective integral scheme over $\mathbb C$. Unless otherwise stated, a point on variety is assumed to be closed. 
 A \emph{fibration} is by definition a surjective projective morphism $\pi:X\rightarrow Y$ between varieties such that $\pi_*\mathcal O_X=\mathcal{O}_Y$. 
\begin{definition}[Fano fibration]\label{def:Fano fibration}
	 A Fano fibration is a fibration $\pi: X\rightarrow Y$ where $X$ and $Y$ are normal varieties,  $X$ has klt singularities, and $-K_X$ is a relatively ample $\mathbb Q$-Carier divisor.
\end{definition} 
This definition agrees with the one introduced in \cite{Birkar}, where the study of Fano fibrations is motivated by the minimal model program. In particular, contractions of $K_X$-negative extremal faces are natural Fano fibrations. However, we emphasize that here do not require $Y$ to be projective. Indeed, our main interest is to study the \emph{local} behavior around a fixed fiber of a Fano fibration. 

\begin{definition}[Fano fibration germ]
    A Fano fibration germ consists of the data $(\pi: X\rightarrow Y, p)$, where $\pi: X\rightarrow Y$ is a Fano fibration and $p$ is a closed point in $Y$. 
    We call $p$ the center of the Fano fibration germ.
\end{definition}

Since we are interested in the germ of $X$  at $\pi^{-1}(p)$, we often assume $Y$ is affine and we are allowed to shrink $Y$ when necessary. In particular, when $Y=\{p\}$, a Fano fibration germ is simply a Fano variety; when $\pi$ is the identity map a Fano fibration germ is simply a klt singularity.  

\begin{remark}As in the literature, one can define the notion of a \emph{log fano fibration} by including a boundary divisor $D$ on $X$, extending the notion of log Fano varieties and log Fano cones. One can also try to extend the discussion to the complex-analytic setting. For our purpose here we will not discuss these in this paper. 
\end{remark}

\subsection{Polarized Fano fibrations}
\emph{Polarized} Fano fibrations are Fano fibrations endowed with a continuous symmetry.
We first recall the definition of \emph{polarized affine cones} in \cite{CollinsSz}, which provides the algebro-geometric framework for studying canonical metrics in Sasaki geometry.  Throughout this paper, $\mathbb T$ denotes a real compact torus whose Lie algebra is denoted by $\mathrm{Lie}(\mathbb T)$.

\begin{definition}[Polarized affine cone \cite{CollinsSz}]
	A polarized affine cone $(Y, \xi,\mathbb T)$ consists of a normal affine variety $Y=\Spec(R)$ equipped with a $\mathbb T$-action with a unique fixed point and an element $\xi\in \mathrm{Lie}(\mathbb T)$ in the Reeb cone. The Reeb cone of the $\mathbb T$-action is defined as the set of all $\eta\in \mathrm{Lie}(\mathbb T)$  such that under the weight decomposition 
	$$R=\bigoplus_{\alpha\in \mathrm{Lie}(\mathbb T)^*}R_\alpha,$$ 
	we have $\langle \alpha, \eta\rangle>0$ for all non-zero $\alpha$ with $R_\alpha\neq \{0\}$.
\end{definition}

\begin{definition}[Polarized Fano fibration] \label{d:polarized Fano}
	A polarized Fano fibration $(\pi: X\rightarrow Y, \xi)$ consists of a Fano fibration $\pi: X\rightarrow Y$ equipped with an equivariant torus $\mathbb T$-action and an element $\xi\in \mathrm{Lie}(\mathbb T)$, such that $(Y, \mathbb T, \xi)$ is a polarized affine cone. 
	
	We also call $\xi$ the Reeb vector field. The Reeb cone of the polarized Fano fibration is defined to be the Reeb cone of $Y$.
	\end{definition} 

   Notice that $Y$ is determined by $X$ as $\mathrm{Spec}(R_X)$, where $R_X$ denotes the ring of regular functions on $X$. A polarized Fano fibration naturally induces a  Fano fibration germ, with the center given by the vertex of $Y$ (the unique fixed point of the $\mathbb T$-action). By definition it also follows that there are $\mathbb T$-equivariant embeddings of $Y$ into $\C^{N_1}$ and $X$
 into $\C^{N_1}\times \mathbb C\mathbb P^{N_2}$, for some linear actions of $\mathbb T$ on $\C^{N_1}$ and $\mathbb C\mathbb P^{N_2}$, such that $\pi$ is induced by the projection $\pi_1: \C^{N_1}\times \C\mathbb P^{N_2}\rightarrow \C^{N_1}$, and $\pi_2^*\mathcal O(1)|_{X}$ gives $-mK_X$ for some positive integer $m$.

As we shall see in this paper, the notion of a polarized Fano fibration provides the appropriate framework for studying general K\"ahler-Ricci shrinkers. It also incorporates many settings introduced previously. 
First, when $Y$ is a point, by definition the Reeb cone is the entire $\mathrm{Lie}(\mathbb T)$. In this case a polarized Fano fibration is simply a Fano variety $X$ endowed with a holomorphic vector field $\xi$, which we call a \emph{polarized Fano variety}. When $\xi=0$ this is the setting for studying K\"ahler-Einstein metrics.  When $\xi\neq0$ this is the setting for studying K\"ahler-Ricci shrinkers with a prescribed soliton vector field. Secondly, when $\pi$ is the identity map a polarized Fano fibration is the same as a polarized affine cone with klt singularities.  This is usually referred to as  a \emph{Fano cone}. This is the setting for studying the existence of  Ricci-flat K\"ahler cone metrics with a given Reeb vector field.

\section{Algebraicity of K\"ahler-Ricci shrinkers }\label{s:algebraicity of shrinkers}
A compact K\"ahler-Ricci shrinker lives on a Fano manifold, which is projective, so it defines an obvious polarized Fano fibration whose base consists of a single point. In the non-compact case, there is no immediate restriction on the underlying complex manifold -- a priori it may even have infinite topology.  To connect with algebraic geometry we prove the following

\begin{theorem}\label{t:main}
	A K\"ahler-Ricci shrinker $(X, J, \omega, f)$ naturally defines a polarized Fano fibration. In particular, $X$ is quasi-projective. 
\end{theorem}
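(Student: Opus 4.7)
The plan is to realize $X$ as a quasi-projective variety by (i) extracting a compact torus $\mathbb{T}$ and a candidate Reeb element $\xi$ directly from the soliton data, (ii) building a graded algebra $R$ of $\mathbb{T}$-finite holomorphic functions of polynomial growth, and (iii) invoking Birkar's boundedness theorem for Fano-type varieties to finitely generate $R$ and upgrade the transcendental picture to an algebraic Fano fibration $\pi : X \to Y := \Spec(R)$.

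For steps (i) and (ii), I would first collect the analytic input: taking the trace in \eqref{e:1.1} together with standard shrinker estimates (Cao-Hamilton, Haslhofer-M\"uller) shows that $f$ is smooth and proper with quadratic growth, the critical set $K := \mathrm{Crit}(f)$ is compact and fixed by $V := J\nabla f$, and the downward gradient flow of $f$ deformation-retracts $X$ onto $K$. Since $V$ is a holomorphic Killing field, the closure of its $1$-parameter flow in $\mathrm{Isom}(X,\omega)$ is a compact real torus $\mathbb{T}$ acting holomorphically on $X$; let $\xi \in \mathrm{Lie}(\mathbb{T})$ be the element integrating to $V$. Let $R \subset H^0(X,\mathcal O_X)$ be the span of the $\mathbb{T}$-finite holomorphic functions of polynomial growth in $\sqrt{f}$, decomposed as $R = \bigoplus_\alpha R_\alpha$ by $\mathbb{T}$-weights. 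Drifted-Laplacian $L^2$ estimates in the style of Munteanu-Wang make each $R_\alpha$ finite-dimensional, while the soliton identity together with the compactness of $K$ force $R_0 = \mathbb{C}$ and $\langle \alpha,\xi\rangle > 0$ for every nonzero $\alpha$ with $R_\alpha \neq 0$. Hence $\xi$ automatically lies in the Reeb cone once $\Spec(R)$ is known to be an affine variety.

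The decisive step (iii) is to show that $R$ is finitely generated and that evaluation by elements of $R$ gives $X$ itself the structure of a quasi-projective variety for which $\pi : X \to Y$ is a Fano fibration. My plan is to work with the sublevel sets $U_c = \{f < c\}$, which are relatively compact and $\mathbb{T}$-invariant, and to produce $\mathbb{T}$-equivariant algebraic models of them fibering over finite-type pieces of the candidate base. The would-be general fibers of such a map are Fano varieties whose (log) discrepancies are uniformly controlled by the soliton equation and Perelman-type functionals, so Birkar's boundedness theorem confines these fibers to a single bounded family and supplies a uniform relative polarization by $-K_X$. Gluing the resulting relative projective embeddings along overlapping sublevel sets should then yield a single quasi-projective structure on $X$, finite generation of $R$, and the morphism $\pi : X \to Y$ with connected Fano fibers; relative ampleness of $-K_X$ is read off from the soliton equation, and $\pi_*\mathcal O_X = \mathcal O_Y$ from $R_0 = \mathbb{C}$. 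The main obstacle is precisely this bootstrap from infinitely many local algebraizations to one global quasi-projective structure without any a priori algebraic compactification: the role of Birkar's boundedness is to provide the uniform algebraic control that the transcendental estimates alone cannot, and the delicate work lies in matching the analytic output---the weight decomposition, the discrepancy bounds, the relative ampleness---against the precise algebraic hypotheses of the boundedness theorem.
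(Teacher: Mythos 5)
Your proposal shares with the paper the use of the compact torus $\mathbb T$ generated by $J\nabla f$ and the eventual invocation of Birkar's boundedness theorem, but it misses the key mechanism that makes the boundedness theorem applicable, and as written it presupposes precisely the global algebraic structure at infinity that needs to be constructed.

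The crucial step in the paper is \emph{K\"ahler reduction}. One perturbs $\xi$ to a rational $\eta \in \mathrm{Lie}(\mathbb T)$ generating an effective $S^1$-action with proper Hamiltonian $u$, and performs symplectic/K\"ahler quotient at each regular value $z$ of $u$. This produces a one-parameter family of \emph{compact} polarized klt log pairs $(X_z, D_z, L_z)$ satisfying $-(K_{X_z}+D_z)+zL_z > 0$. Across critical values these quotients undergo explicit wall-crossings (of Types I--IV, corresponding to variation of GIT), and one checks by hand that the pairs remain uniformly $\delta$-lc with $K_{X_z}+D_z+\Delta_z \sim_{\mathbb Q} 0$ for a big boundary $\Delta_z$. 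It is \emph{to this family of compact log Calabi--Yau pairs} that Birkar's boundedness theorem applies, yielding a uniform bound on the orbifold orders, which in turn forces the zero set of $\eta$ to be compact (Proposition \ref{p:compact critical set}). Only \emph{after} this compactness is established can one identify the structure of $X$ at infinity with a fixed semi-ample polarized pair, construct the affine cone $Y'$, and use pseudo-convexity plus H\"ormander's $L^2$ method to embed $X$ equivariantly into $Y' \times \mathbb{CP}^N$.

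Your plan does not contain a substitute for this step. Proposing to ``produce $\mathbb T$-equivariant algebraic models of the sublevel sets $U_c = \{f < c\}$ fibering over finite-type pieces of the candidate base'' is not a construction: with no a priori control on the geometry or topology of $X$ at infinity, there is no reason these relatively compact complex manifolds admit any algebraic model at all, and there is no bounded \emph{family} of Fano-type objects visible to which Birkar's theorem could be applied --- the would-be Fano fibers of $\pi$ are a single deformation class, not a family indexed in a way that detects unboundedness of $X$. A second, smaller issue: since $X$ contains compact positive-dimensional subvarieties (the fibers of $\pi$), holomorphic functions alone --- even of prescribed growth --- cannot embed $X$; one needs pluri-anticanonical sections, and producing enough of them again requires the pseudo-convexity of preimages of neighborhoods of the vertex of $Y'$, which is only available after the K\"ahler reduction analysis. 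In short, the proposal correctly identifies the high-level ingredients (torus symmetry, polynomial-growth weight spaces, Birkar), but the proof as outlined would not go through: the K\"ahler reduction argument converting the non-compact problem into boundedness for a one-parameter family of compact log Calabi--Yau pairs is the missing idea, not a detail.
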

\begin{remark}When the curvature tensor has quadratic decay the above result was proved in \cite{ConlonDS}, in which case the shrinker is known to be conical.   
\end{remark}

The rest of this section is devoted to the proof of this theorem. 
Let $(X, \omega, f)$ be a non-compact K\"ahler-Ricci shrinker. In this case, by \cite{CaoZhou} the function $f$ is bounded below and proper. 
Denote the vector field $\xi=J\nabla f$. Then $\xi$ is a holomorphic Killing field on $X$ and $\xi\mathbin\lrcorner \omega=-df$. By the Bochner formula one can see that $$\bar \p(\Delta_\omega f+f+\frac{1}{2}(J\xi)\cdot f)=0.$$ Since $f$ is real-valued, we may normalize $f$ so that \begin{equation}\label{normalization of the soliton potential}
	\Delta_\omega f+f+\frac{1}{2}(J\xi)\cdot f=0.
\end{equation}
In particular, we have $\inf_Xf<0$.

The vector field $\xi$ generates a holomorphic isometric action of a compact (real) torus $\mathbb T$ on $X$. In general $\mathbb T$ may have rank bigger than $1$. We first perturb $\xi$ to a rational element $\eta$ in $\mathrm{Lie}(\mathbb T)$ which generates an $S^1$-action with a proper moment map.
 As is explained in \cite[Proposition 5.14]{ConlonDS}, the $\mathbb T$ action has a moment map $\mu: X\rightarrow \mathrm{Lie}(\mathbb T)^*$ such that  each $\eta\in \mathrm{Lie}( \mathbb T)$, viewed as a holomorphic vector field on $X$, satisfies that $\eta \mathbin\lrcorner \omega=-du_\eta$ for $u_\eta:=\langle\mu, \eta\rangle$. Moreover, $u_\eta$ can be normalized by requiring that 
 \begin{equation}\label{e:Hamiltonian equation}
     \Delta_\omega u_\eta+u_\eta+\frac{1}{2}(J\eta)\cdot f=0.
 \end{equation} By definition $f=u_\xi$. Since $f$ is proper and bounded below it follows that $\mu$ is also a proper mapping. By the generalized Atiyah-Guillemin-Sternberg convexity theorem in the non-compact setting due to Hilgert-Neeb-Plank \cite{HNP}, we know that the image $\mu(X)$ is a closed convex and locally polyhedral set in $\mathrm{Lie}(\mathbb T)^*$. 
 
\begin{lemma}\label{l:choice of eta}For $\eta$ sufficiently close to $\xi$, the function $u_\eta$ is proper and bounded below.
\end{lemma}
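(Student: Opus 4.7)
The plan is to translate both properness and lower-boundedness of $u_\eta$ into a single convex-geometric openness condition on $\eta$ relative to the moment-map image $P:=\mu(X)\subset \mathrm{Lie}(\mathbb T)^*$. The paper has already recorded that $\mu$ is proper (since $f=u_\xi$ is) and that $P$ is closed, convex and locally polyhedral by Hilgert--Neeb--Plank. Properness of $\mu$ then reduces properness (resp.\ lower-boundedness) of $u_\eta=\langle\mu,\eta\rangle$ on $X$ to properness (resp.\ lower-boundedness) of the linear functional $\ell_\eta(\cdot):=\langle\cdot,\eta\rangle$ restricted to $P$, and the entire question becomes one about the closed convex set $P$.

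The main step is then purely convex-geometric: identify the correct condition on $\eta$ in terms of the recession cone
\[
C:=\{v\in \mathrm{Lie}(\mathbb T)^* : p+tv\in P \text{ for all } p\in P,\ t\ge 0\},
\]
a closed convex cone. I expect the equivalence
\[
\ell_\eta \text{ is proper and bounded below on } P \iff \langle v,\eta\rangle>0 \text{ for every nonzero } v\in C.
\]
For ``$\Leftarrow$'' I would run the standard blow-down argument: given a sequence $p_n\in P$ with $|p_n|\to\infty$, after fixing a base point $p_0\in P$ and passing to a subsequence, $(p_n-p_0)/|p_n-p_0|$ converges to a unit vector $v$, which lies in $C$ by convexity of $P$; then $\ell_\eta(p_n)/|p_n|\to\langle v,\eta\rangle>0$ forces $\ell_\eta(p_n)\to+\infty$, delivering both properness and a lower bound. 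For ``$\Rightarrow$'' one tests $\ell_\eta$ along a ray $p+tv\subset P$: if $\langle v,\eta\rangle<0$ one loses the lower bound, and if $\langle v,\eta\rangle=0$ one loses properness.

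Granted the equivalence, the conclusion is soft. The set $\mathcal R\subset\mathrm{Lie}(\mathbb T)$ of $\eta$ satisfying the right-hand side is open: the function $m(\eta):=\min\{\langle v,\eta\rangle : v\in C,\ |v|=1\}$ is continuous in $\eta$ (the minimum is over a compact slice of $C$), and $\mathcal R=\{m>0\}$ is open. The hypothesis that $f=u_\xi$ is proper and bounded below forces $\xi\in\mathcal R$, so every $\eta$ in a neighborhood of $\xi$ lies in $\mathcal R$, which is the lemma. The only thing I expect to have to check carefully is the linearity of $\eta\mapsto u_\eta$ under the specific normalization \eqref{e:Hamiltonian equation} (so that $u_\eta$ really is $\langle\mu,\eta\rangle$ for a single moment map); but \eqref{e:Hamiltonian equation} is linear in the pair $(\eta,u_\eta)$ and pins down $u_\eta$ uniquely, so this is purely bookkeeping. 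Beyond that, no serious obstacle is anticipated.
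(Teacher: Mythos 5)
Your proof is correct, and it takes a genuinely different route from the paper's. Where you reformulate the statement via the recession cone $C$ of $P=\mu(X)$ — characterizing the admissible $\eta$ as those with $\langle v,\eta\rangle>0$ for all $0\neq v\in C$, noting this is an open condition, and verifying $\xi$ lies in it — the paper argues directly by an affine-decomposition estimate. Concretely, the paper picks $w_0\in P$ with $L_\xi(w_0)=0$ (possible since $\inf_X f<0$), writes any $w\in P$ with $L_\xi(w)>1$ as the affine combination $w=L_\xi(w)w'+(1-L_\xi(w))w_0$ with $w'$ in the slice $\{L_\xi=1\}\cap P$, uses properness of $f$ to get compactness of that slice, and then for $\eta$ close to $\xi$ extracts the explicit lower bound $L_\eta(w)\geq\tfrac12 L_\xi(w)+L_\eta(w_0)$. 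Both proofs ultimately rest on the same Hilgert–Neeb–Plank inputs (closedness and convexity of $P$, properness of $\mu$). Your recession-cone version gains a clean conceptual characterization of the admissible set of vector fields and makes the ``openness'' transparent as strict positivity on a compact slice of a cone; the paper's version gains a quantitative linear lower bound on $L_\eta$ in terms of $L_\xi$ for free, which is slightly more than the lemma asserts. One small inaccuracy worth fixing: in your blow-down step you wrote $\ell_\eta(p_n)/|p_n|\to\langle v,\eta\rangle$; strictly speaking the clean limit is for $\ell_\eta(p_n)/|p_n-p_0|$, though since $|p_n-p_0|/|p_n|\to 1$ the conclusion is unaffected. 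Your point about the linearity of $\eta\mapsto u_\eta$ under the normalization \eqref{e:Hamiltonian equation} is indeed just bookkeeping; the paper takes $u_\eta=\langle\mu,\eta\rangle$ for a single moment map $\mu$ as given from \cite[Proposition 5.14]{ConlonDS}.
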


 \begin{proof} 
 For $\eta\in \mathrm{Lie}(\mathbb T)$ we denote by $L_\eta$ the associated linear function on $\mathrm{Lie}(\mathbb T)^*$. 
 Since $\inf_X f<0$ we can find a point $w_0\in \mu(X)$ with $L_\xi(w_0)=0$. Now for any $w\in \mu(X)$ with $L_\xi(w)>1$, by the convexity of $\mu(X)$ we may write $$w=L_\xi(w) w'+(1-L_\xi(w))w_0$$ for some $w'\in \mu(X)$ with $L_\xi(w')=1$.  Since $f$ is proper the level set $\{L_\xi(w)=1\}\cap \mu(X)$ is compact. Fix a norm $\|\cdot\|$ on $\mathrm{Lie}(\mathbb T)$, we can find $\delta>0$ such that if $\|\eta-\xi\|\leq\delta$ then $\frac{3}{2}>L_\eta(w-w_0)>\frac{1}{2}$ for all $w$ with $L_\xi(w)=1$.
 For such $\eta\in \mathrm{Lie}(\mathbb T)$, we then have 
 $$L_\eta(w)=L_\eta(w'-w_0)L_\xi(w)+L_\eta(w_0)\geq \frac{1}{2}L_\xi(w)+L_\eta(w_0).$$
 In particular, $L_\eta$ is proper and bounded below on $\mu(X)$. The conclusion follows. 
 \end{proof}

  We fix an element $\eta$ given in the above lemma such that it generates an effective $S^1$ action and denote $u:=u_\eta$. Then $\eta=J\nabla u$. It is well-known that $u$ is a Morse-Bott function with only even indices. Since $u$ is proper, we then conclude that each of its level sets is connected \cite[Theorem 4.1]{HNP}.  For a regular value $z$ of $u$, we do K\"ahler reduction at $u=z$. The result is a K\"ahler  metric with orbifold singularities $\omega_z$ on a polarized  log  pair $(X_z, D_z, L_z)$. Here $X_z$ is a normal complex-analytic variety with quotient singularities; the effective divisor $D_z$ on $X_z$ is given by \begin{equation}
     D_z=\sum_\alpha (1-\frac{1}{r_{z,\alpha}})D_{z, \alpha},
 \end{equation}where $D_{z, \alpha}$'s correspond to complex codimension 2 loci (in $X$) of the set of points with non-trivial stabilizer, and  $r_{z, \alpha}$ is the order of the isotropy along a generic point of $D_{z, \alpha}$; $\omega_z$ is a K\"ahler metric on $X_z$ with orbifold singularities along the singular set of $X_z$ and with cone angle $2\pi/r_{z, \alpha}$ along $D_{z, \alpha}$. In particular, $(X_z, D_z)$ is a klt pair. There is also an orbifold hermitian line bundle $L_z$ over $X_z$ associated with the $S^1$ action, which can also be viewed as a $\mathbb Q$-Cartier divisor on $X_z$. The induced hermitian metric on $L_z$ has Chern curvature form $-\sqrt{-1}\p_z\omega_z$.

 \begin{lemma}
   The following equation holds in the sense of currents
 \begin{equation} \label{e: orbifold reduced Ricci curvature}
	Ric(\omega_{z})=2\pi \delta_{D_z}+\omega_z-z\p_z\omega_z+\sqrt{-1}\partial\bar\partial (-f+\log |\nabla u|^2), 
\end{equation}
where $\delta_{D_z}$ denotes the $(1,1)$-current defined by integration along $D_z$. 
 \end{lemma}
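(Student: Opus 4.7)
The plan is to work locally on the smooth open subset of $X_z$ away from the orbifold divisor $D_z$, derive a volume-form identity on $X$, apply the soliton equation, and then descend to the quotient by a careful chain-rule computation. The orbifold contribution $2\pi\delta_{D_z}$ will then arise from a standard branched-cover argument along the isotropy divisors. Note that both $f$ and $|\nabla u|^2$ are $\mathbb{T}$-invariant, so they descend to functions on $X_z$.

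First, set up local holomorphic coordinates adapted to the $\mathbb{C}^*$-action complexifying the $S^1$-action. Near a point of $u^{-1}(z)$ with trivial $S^1$-isotropy, pick coordinates $(z_1,\dots,z_{n-1},\zeta)$ so that $S^1$ acts by $\zeta\mapsto e^{\sqrt{-1}\theta}\zeta$, and set $\tau = \log|\zeta|^2$. By $S^1$-invariance the local K\"ahler potential has the form $K(z_i,\bar z_i,\tau)$, the moment-map convention gives $u = \partial K/\partial\tau$, and one computes $|\nabla u|^2 = 2u_\tau$ together with $g^X_{i\bar\zeta} = u_i/\bar\zeta$ and $g^X_{\zeta\bar\zeta} = u_\tau/|\zeta|^2$. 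A block-determinant calculation then yields
\begin{equation*}
\det(g^X_{\alpha\bar\beta}) \;=\; \frac{u_\tau}{|\zeta|^2}\,\det\bigl(K_{i\bar j} - u_i u_{\bar j}/u_\tau\bigr),
\end{equation*}
and the $(n-1)\times(n-1)$ matrix $(g^{X_z})_{i\bar j} := K_{i\bar j} - u_iu_{\bar j}/u_\tau$ is precisely the reduced K\"ahler metric tensor obtained via horizontal lift. Taking $-\sqrt{-1}\partial\bar\partial\log$ of this determinant identity, using that $\partial\bar\partial\log|\zeta|^2 = 0$ off the zero section, and substituting the soliton equation $\mathrm{Ric}(\omega) = \omega - \sqrt{-1}\partial\bar\partial f$ give the ambient identity on $X$:
\begin{equation*}
-\sqrt{-1}\,\partial\bar\partial\log\det\bigl((g^{X_z})_{i\bar j}\bigr) \;=\; \omega + \sqrt{-1}\,\partial\bar\partial\bigl(-f + \log|\nabla u|^2\bigr).
\end{equation*}

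Next I would descend this ambient identity to $X_z$. Restricting to $\{u = z\}$ and extracting the $(dz_i\wedge d\bar z_j)$-component, the task is to reinterpret the ambient $\partial\bar\partial$ on $X$ as the intrinsic $\partial_{X_z}\bar\partial_{X_z}$. The difference between these operators is a chain-rule correction from the implicit relation $\tau = \tau(z_i,\bar z_i)$ defined by $u = z$, which gives $\tau_i = -u_i/u_\tau$. On the one hand the $(dz_i\wedge d\bar z_j)$-component of $\omega$ equals $\omega_z + \sqrt{-1}(u_iu_{\bar j}/u_\tau)\,dz_i\wedge d\bar z_j$; on the other hand the chain-rule corrections applied to $\log\det(g^{X_z})$ and to $-f + \log|\nabla u|^2$ produce additional $\tau$-derivative contributions. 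Collecting these and using $\partial_z\tau = 1/u_\tau$, $\partial_z\omega_z = \sqrt{-1}u_\tau^{-1}\,\partial_\tau(g^{X_z})_{i\bar j}\,dz_i\wedge d\bar z_j$, and the identity $z = u$ along the level set, the remaining corrections collapse exactly to $-z\,\partial_z\omega_z$, producing the claimed formula on the smooth locus of $X_z$.

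The contribution $2\pi\delta_{D_z}$ along the orbifold divisor $D_z = \sum_\alpha(1-1/r_{z,\alpha})D_{z,\alpha}$ is handled by passing to the local $r_{z,\alpha}$-fold orbifold cover along each component, where the $S^1$-action is free and the above computation applies; pushing back down introduces the standard cone-angle defect as a current along $D_{z,\alpha}$. The main obstacle is the chain-rule reorganization in the descent step: collecting all the $\tau$-derivative terms into the single expression $-z\,\partial_z\omega_z$ is effectively a Legendre-type identity relating the K\"ahler potential $K$ on $X$ to the family of reduced potentials on $\{X_z\}_z$, and requires careful bookkeeping of second-order chain-rule terms.
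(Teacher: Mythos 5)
Your approach is genuinely different from the paper's. You work in holomorphic coordinates $(z_1,\dots,z_{n-1},\zeta)$ with a cylindrical variable $\tau=\log|\zeta|^2$ and a $\mathbb T$-invariant K\"ahler potential $K(z_i,\bar z_i,\tau)$, deriving the reduced metric via the block-determinant/Legendre picture. The paper instead works directly in mixed (non-holomorphic) coordinates $\{z=u_\eta,\, t,\, w_1,\dots,w_{n-1}\}$ adapted to the $S^1$-action, decomposing $\omega = dz\wedge\Theta + \omega_z$, constructing the local holomorphic volume form $\Omega = \kappa\wedge\Omega_R$ explicitly, and comparing $\omega^n/(\Omega\wedge\bar\Omega)$ with the quotient data. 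Your potential-based route is a legitimate alternative and more directly exhibits the appearance of the reduced metric $(g^{X_z})_{i\bar j} = K_{i\bar j} - u_iu_{\bar j}/u_\tau$; the paper's route avoids choosing a local potential and is closer in spirit to the symplectic-reduction literature.

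However, there is a genuine gap, and it is not merely "careful bookkeeping." Carrying out the chain-rule descent you outline, one finds that the ambient identity only constrains the derivatives of the quantity $Q := \partial_\tau\log\det(g^{X_z}_{i\bar j}) + \partial_\tau(-f+\log u_\tau) + u$: the $d\zeta\wedge d\bar\zeta$ and $dz_i\wedge d\bar\zeta$ components of the ambient equation say precisely that $\partial_\tau Q = 0$ and $\partial_i Q = 0$, so $Q$ is a constant. The descent then yields $\mathrm{Ric}(\omega_z) = \omega_z - (z+C)\,\partial_z\omega_z + \sqrt{-1}\,\partial\bar\partial(-f+\log|\nabla u|^2)$ for this undetermined constant $C$, exactly as in the paper. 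Nothing in the Legendre/chain-rule algebra alone forces $C=0$: the moment map $u=K_\tau$ is only determined up to adding $c$ (replace $K$ by $K + c\tau$), which shifts the level value $z$ by $c$ without altering $\omega$ or $f$. The constant is fixed only by the specific normalization of $u_\eta$ in equation~\eqref{e:Hamiltonian equation}, $\Delta_\omega u_\eta + u_\eta + \tfrac12(J\eta)\cdot f = 0$, and the paper devotes the final part of its proof (computing $\mathcal L_{J\eta}F$ in two ways) to showing $C=0$ via this relation. Your proposal never invokes~\eqref{e:Hamiltonian equation}, so as written it cannot produce the coefficient $-z$ rather than $-(z+C)$. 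To complete the argument you must add this normalization step.
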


\begin{proof}
This follows from a direct computation  (see for example by \cite{BurnsG} or by following \cite[Section 2.1]{SunZh19}). For the convenience of readers, we include a proof here. 
     Since the result is local it suffices to consider the locus where the $S^1$ action is free. 
 Denote by $\{w_1, \cdots, w_{n-1}\}$ the local holomorphic coordinates on the  complex quotient $R$. Then they can be viewed as local holomorphic functions on $X$.  Let $t$ be an arbitrary local function on $X$ with $\eta(t)=1$,  Then $\{z=u_\eta, t, w_1, \cdots, w_{n-1}\}$ gives a local coordinate system on $X$ and we have $\eta=\p_t$.
 
One can write  $Jdz=h^{-1}\Theta=h^{-1}(dt-\theta),$
where $h>0$ is a local function depending on $z, w_1, \cdots, w_{n-1}$, and  $\theta$ is a family of local 1-forms on $R$ parameterized by $z$. One can see that $h^{-1}=\omega(\eta, J\eta)=|\nabla u|^2$.

We can write the K\"ahler form $\omega$ on $X$ as 
\begin{equation}\label{kahler form into horizontal and vertical}
\omega=dz\wedge \Theta+\omega_z,
\end{equation}
where $\omega_z$ is a family of $(1,1)$-forms  on $R$ parameterized by $z$. The condition that $\omega$ is K\"ahler is equivalent to 
\begin{equation}\label{omegahequation}
\begin{cases}\p_z^2\omega_z+d_Rd_R^ch=0\\
d\Theta=\p_z\omega_z-dz\wedge d_R^ch.
\end{cases}
\end{equation}
where $d_R$ denotes the differential along $R$. Locally we can write $$\omega_z^{n-1}=V(\sqrt{-1})^{(n-1)^2}\Omega_R\wedge\bar\Omega_R,$$ where $\Omega_R=dw_1\wedge\cdots\wedge dw_{n-1}$ is the holomorphic volume form on $R$ and $V$ is a positive function. Then 
\begin{equation}\label{ricci curvature for quotient}
   \mathrm{Ric}(\omega_z)=-\frac{1}{2}d_Rd_R^c\log V. 
\end{equation}On $X$ we have a local  holomorphic volume form $\Omega=\kappa\wedge\Omega_R$ where $\kappa$ is the dual $(1,0)$-form to the holomorphic vector field  $\eta^{1,0}:=\frac{1}{2}(\eta-\sqrt{-1}J\eta)$. One can see that $\kappa=(\Theta-\ii hdz+\kappa')$ where $\kappa'$ is a form on $R$. Then we know 
\begin{equation}\label{global Ricci curvature} \frac{\omega^n}{h^{-1}V\Omega\wedge\ols\Omega}\equiv\mathrm{const}, \text{ and }
\mathrm{Ric}(\omega)=-\frac{1}{2}dd^c\log(h^{-1}V).
\end{equation}
Denote $F=f-\log(h^{-1}V)$, then by \eqref{global Ricci curvature} and \eqref{e:1.1},
 we obtain \begin{equation}
    \omega=\frac{1}{2}dd^c(F)=\frac{1}{2}\left(
  d_Rd_R^cF+dz\wedge d_R^c(\p_zF)+d(h^{-1}\p_zF)\wedge \Theta+ h^{-1}\p_zFd\Theta\right).
\end{equation}
Then by \eqref{kahler form into horizontal and vertical}, \eqref{omegahequation} and \eqref{ricci curvature for quotient}, we arrive at 
\begin{equation}
    2\omega_z=d_Rd_R^c(f+\log h)+2\mathrm{Ric}(\omega_z)+h^{-1}\p_zF \p_z\omega_z
\end{equation}
and 
\begin{equation}
    \p_z(h^{-1}\p_z F)=2.
\end{equation}
Hence for some constant $C$, we have 
\begin{equation} 
	\mathrm{Ric}(\omega_{z})=\omega_z-(z+C)\p_z\omega_z+\sqrt{-1}\partial\bar\partial (-f+\log |\nabla u|^2).
\end{equation}
It suffices to show $C=0$.  Notice that 
\begin{equation}\label{eq--first expression}
    \mathcal L_{J\eta}F=-h^{-1}F_z=-2(z+C).
\end{equation} On the other hand, by the definition of $F$,
\begin{equation}\label{eq--by defintion}
    \mathcal L_{J\eta}F=J\eta\cdot f-\mathcal L_{J\eta}\log(h^{-1}V).
\end{equation} Since $\mathcal L_{J\eta}\Omega=\mathcal L_{\eta}\Omega=0$, we have 
\begin{equation}\label{second expression}
    \mathcal L_{J\eta}(\log(h^{-1}V)) =\mathcal L_{J\eta}(\log \frac{\omega^n}{\Omega\wedge\ols{\Omega}})=\frac{\mathcal L_{J\eta}\omega^n}{\omega^n}=-2\Delta_\omega u_\eta.
\end{equation}
Combining \eqref{eq--first expression}, \eqref{eq--by defintion},\eqref{second expression} and Equation \eqref{e:Hamiltonian equation}, we conclude that $C=0$.
\end{proof}
So as a log pair we have 
\begin{equation} \label{e:divisor equation}
    -(K_{X_z}+D_z)+zL_z>0.
\end{equation}
It is well-known that the zero set $Q$ of $\eta$ is a disjoint union of smooth totally geodesic complex submanifolds of $X$. Since $u$ is proper, each component of $Q$ is compact. The following is a crucial result.

  \begin{proposition} \label{p:compact critical set}
      $Q$ is compact. 
  \end{proposition}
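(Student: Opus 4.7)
The strategy is to reduce compactness of $Q$ to a uniform upper bound on $u|_Q$. Since $u$ is proper and bounded below by Lemma \ref{l:choice of eta}, and $u$ is Morse--Bott with $u$ locally constant on $Q$ (each component of $Q$ being compact as stated), $Q$ is itself compact if and only if the set of critical values of $u$ is bounded from above.

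The first step is a local identity at $p \in Q$: both $\eta(p)$ and $J\eta(p) = \nabla u(p)$ vanish, so the term $(J\eta)\cdot f$ in \eqref{e:Hamiltonian equation} drops out and we obtain
\[
	u(p) = -\Delta_\omega u(p).
\]
In $\mathbb T$-equivariant local holomorphic coordinates around $p$ in which the $S^1$-action generated by $\eta$ diagonalizes with integer weights $w_1(p), \ldots, w_n(p)$ on $T_pX$, the right-hand side equals $-\sum_j w_j(p)$ up to a universal positive constant depending only on the Laplacian convention. Thus the proposition reduces to bounding $\sum_j w_j(p)$ uniformly from below as $p$ ranges over $Q$.

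My plan is to combine the shrinker equation with Cao--Zhou's asymptotic estimate $f(x) \sim \tfrac{1}{4}d(x,x_0)^2$ and the closeness of $\eta$ to $\xi = J\nabla f$. At $p \in Q$ one has $|\nabla f|(p) = |\xi - \eta|(p)$, which (up to the small factor $\|\xi - \eta\|_{\mathrm{Lie}(\mathbb T)}$) is the norm of a fixed Killing vector on $X$. Together with the standard shrinker identity $|\nabla f|^2 + R = 2f + C$, Chen's nonnegativity $R \geq 0$, and the at-most-linear growth of a Killing field on a shrinker (from the quadratic growth of the moment map norm), this lets one rule out points $p \in Q$ at which $f(p)$ is arbitrarily large, hence constraining the critical values of $u$.

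The main obstacle is to leverage the pointwise identity $u(p) = -\sum_j w_j(p)$ across a potentially infinite family of critical components and promote it to a genuine uniform bound. I expect this is handled either by applying a Duistermaat--Heckman-type localization to the finite weighted integral $\int_X e^{-u}\omega^n < \infty$ (whose convergence imposes analytic constraints forcing the $u(F)$ to be bounded above) or by a direct analysis of the moment map image $\mu(X) \subset \mathrm{Lie}(\mathbb T)^*$ via the Hilgert--Neeb--Plank convexity theorem, using the reduction equation \eqref{e: orbifold reduced Ricci curvature} on the polarized pairs $(X_z, D_z, L_z)$ to show that the relevant ``fixed faces'' of $\mu(X)$ in the $\eta$-direction are confined to a bounded region.
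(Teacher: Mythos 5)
Your proposal starts correctly: the pointwise identity at $p\in Q$ following from \eqref{e:Hamiltonian equation}, namely $u(p)=-\Delta_\omega u(p)$, is exactly the weight identity the paper also uses, and it translates to $\sum_\alpha p_\alpha - \sum_\beta q_\beta = -z_j$, where $p_\alpha$ (resp.\ $-q_\beta$) are the positive (resp.\ negative) integer weights of the linearized $S^1$-action on the normal bundle. However, the route you then take does not close the argument, and I do not see how it could with the listed ingredients.

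The central difficulty is that the weight identity bounds only the \emph{alternating sum} of the weights, not the individual weights. The critical values $z_j$ can a priori grow without bound precisely because some $q_\beta$ (equivalently, an isotropy order) can grow without bound; this is the phenomenon the paper's Types I--IV analysis makes explicit. Your shrinker-equation route does not speak to this: the inequality $|\nabla f|^2 \leq 2f + C$ (from $R\geq 0$), combined with $f\sim\tfrac14 d^2$ and the at-most-linear growth of the Killing field $\xi-\eta$, is satisfied \emph{identically} with both sides of the same quadratic order, and no contradiction arises for critical points at large $f$-value; you are only recovering a growth estimate that holds everywhere, with nothing special happening at $Q$. Likewise, the Duistermaat--Heckman localization of $\int_X e^{-u}\omega^n<\infty$ does not force boundedness of the $u(F)$'s, because the localized contributions carry the factor $\prod(\pm w_j)^{-1}$ with mixed signs and potential cancellation between infinitely many terms. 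And the Hilgert--Neeb--Plank convexity theorem bounds the shape of $\mu(X)$ but again says nothing about orbifold orders of the reduced spaces.

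The paper's proof is genuinely of a different character: it converts the weight-bounding problem into a boundedness statement in birational algebraic geometry. Via the variation-of-GIT analysis (Types I--IV) and Lemma \ref{lem--main}, all the reduced pairs $(X_z,D_z+\Delta_z)$ are shown to be uniformly $\delta$-lc log Calabi--Yau with big boundary. Birkar's boundedness theorem \cite[Corollary 1.4]{Birkar2021} then places the $X_z$ in a bounded family, which (together with lower semicontinuity of normalized volume) uniformly bounds the orbifold orders, hence the weights $q_\beta$, hence the critical values $z_j$. This algebraic boundedness input is precisely what your differential-geometric strategy lacks, and it is the essential new ingredient behind the theorem.
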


We first assume Proposition \ref{p:compact critical set}. Then there exists a $z_*>0$ such that the set $X_{\geq z_*}:=u^{-1}([z_*, \infty))$ does not intersect $Q$. So $[z_*, \infty)$ does not contain a critical value. For $z\geq z_*$ we consider the K\"ahler reduction $(X_z, D_z, L_z, \omega_z)$  at $u=z$.  We may identify $(X_z, D_z, L_z)$ holomorphically with a fixed polarized log pair $(M, D, L)$ for all $z\geq z_*$ and  $\omega_z$ is a  family of conical K\"ahler metrics on $M$ parameterized by $z\geq z_*$. We have $\partial_z[\omega_z]=2\pi c_1(L)$.
 It follows from \eqref{e:divisor equation} that $-(K_M+D)+zL$ is ample   on $M$ for all $z\geq z_*$.  Choosing a rational $z$ we see that $M$ is projective. It also follows that $L$ is nef. 

By the Kawamata base point free theorem we know $L$ is semi-ample, that is, there is a surjective morphism $p: M\rightarrow M'$ and a very ample line bundle $L'$ over $M'$ such that $p^*L'=kL$ for some positive integer $k$. Let $Y'$ be the affine cone associated to $-L'$. Denote by $\mathcal L$ the total space of the line bundle $-kL$. By construction and properties of the K\"ahler reduction, we have natural holomorphic maps $\pi': X_{\geq z_*}\rightarrow \mathcal L\rightarrow Y'$ and the $\mathbb C^*$-action generated by $\eta, J\eta$ descends to a $\mathbb C^*$-action on $Y'$ so that  $\pi'$ is $\mathbb C^*$-equivariant. Using the flow of $-J\eta=\nabla u$ we see $\pi'$ extends to the complement of an analytic subset in $X$, and using the fact that $Y'$ is affine  $\pi'$ extends to a global proper holomorphic map $X\rightarrow Y'$. From the equation \eqref{e:1.1}, the line bundle $-K_X$ admits a hermitian metric with positive curvature form. Since $Y'$ is an affine cone, we can find a small neighborhood $U'$ of the vertex such that $\pi'^{-1}(U')$ is weakly pseudo-convex \cite[Chapter VIII]{Demailly}. Then we can apply H\"ormander's $L^2$ method to construct holomorphic sections of $-mK_X$ and for some $U\subset\joinrel\subset U'$ we get an embedding
\begin{equation}\label{local embedding}
    F: \pi'^{-1}(U)\hookrightarrow U\times \mathbb C\mathbb P^N.
\end{equation}
Notice that we have an induced $S^1$-action on $-K_X$ and by Fourier expansion we may assume the holomorphic sections used to construct the embedding $F$ in \eqref{local embedding} are homogeneous with respect to the $S^1$-action. Then we may assume $F$ is $S^1$-equivariant with respect to the natural $S^1$-actions on $\pi'^{-1}(U)$ and $U$, and some linear $S^1$-action on $\mathbb C\mathbb P^N$. Now the $S^1$-action naturally complexifies to a holomorphic $\C^*$-action, and then we obtain a global holomorphic embedding 
\begin{equation}
    X\hookrightarrow Y'\times \mathbb C\mathbb P^N.
\end{equation}Therefore $X$ admits a quasi-projective variety structure and $\pi'$ is a projective morphism.  Let
 $\pi:X\rightarrow Y$ be the Stein factorization of $\pi'$. Then $Y$ is also an affine variety and the ring of regular functions on $Y$ coincides with that of $X$. Therefore $X$ has no bounded holomorphic functions and then we can also easily show that $-J\xi$ has only positive weights on non-constant holomorphic functions.  
 The $\mathbb T$-action generated by the soliton vector field $\xi$ naturally descends to $Y$, making it into a polarized affine cone, so $\pi$ is a polarized Fano fibration. This completes the proof of Theorem \ref{t:main}.

\

Now we prove Proposition \ref{p:compact critical set}.
 We list the components of $Q$ as $ Q_1, Q_2, \cdots$, such that the corresponding critical values $z_1, z_2,  \cdots$ are in an increasing order. It follows from the equation \eqref{e:Hamiltonian equation} that the minimum value $z_1<0$.

Since $(z_{j-1}, z_j)$ contains a rational number, one sees immediately that $X_z$ is a projective variety for all regular values $z$.
For $z\in (z_{j-1}, z_j)$ the flow of $\nabla u=J\eta$ naturally identifies $(X_z, D_z, L_z)$ with a fixed polarized log pair. 
We need to investigate the birational transformation on $(X_z, D_z, L_z)$ when $z$ crosses a critical value. This fits into the general variation of GIT (see \cite{DoHu, Thaddeus}). In our situation, we can work out the details explicitly.

Given any component of $Q$ on which $u=z_j$,  we have an induced linear $S^1$ action on the holomorphic normal bundle $N$. Suppose it has complex index $m$ and co-index $l$. On the fibers of $N$ the $S^1$ action has positive integer weights $p_{1}, \cdots, p_l$ and negative integer weights $-q_1, \cdots, -q_m$. Denote  $r:= \gcd (q_1, \cdots, q_m)$ and $s:=\gcd(p_1, \cdots, p_l)$. Since the $S^1$ action is effective we have $\gcd(r, s)=1$.
Equation \eqref{e:Hamiltonian equation} implies the following weight identity 
\begin{equation}
	\sum_{\alpha=1}^{l} p_{\alpha}-\sum_{\beta=1}^{m} q_{\beta}=-z_j.
\end{equation}
If the $S^1$ action is quasi-free, i.e. the isotropy group of any point is either trivial or the whole $S^1$, then $p_\alpha$ and $q_\beta$'s are all equal to $1$. This equation implies that $z\leq n$ so $Q$ is compact. 

 In general, the situation is more complicated. 
For simplicity of discussion without loss of generality, we assume there is a unique connected component of the critical set $Q$ on which $u=z_j$ and by abusing notations we will still denote this component by $Q$.
Fix $\epsilon>0$ small such that $z_j$ is the only critical value in $(z_j-\epsilon, z_j+\epsilon)$. We denote by $(X, D, L, \omega_z)$ and $(X', D', L', \omega_z')$ the K\"ahler reduction space for $z\in (z_j-\epsilon, z_j)$ and $z\in (z_j, z_j+\epsilon)$ respectively.

\begin{itemize}[leftmargin=*]
\item (Type I, $m=l=1$) $X$ and $X'$ are naturally isomorphic  and $Q$ sits inside both as a divisor. We have $p_1-q_1=-z_j$.
To compare the log canonical divisor $K_X+D$ and $K_{X'}+D'$ we first investigate the linear model case.

Consider the action of $\C^*$ on $\C^2_{w_1, w_2}$ with weight $(\lambda^{-q_1}, \lambda^{p_1})$. The situation is very similar to the discussion of weighted projective spaces (see for example \cite{RossThomas}),  except we allow the weights of mixed sign (so there is a variation of GIT). For the two quotients here we have global chart $X_-:=\{w_1=1\}/\mathbb Z_{q_1}$ and $X_+=\{w_2=1\}/\mathbb Z_{p_1}$.  So the underlying variety of both is identified with $\mathbb C$ and the boundary divisors are given by $D_-=(1-\frac{1}{q_1})[0]$ and $D_+=(1-\frac{1}{p_1})[0]$. We claim that 
\begin{equation}\label{e:model L change}L_--L_+=\frac{1}{p_1q_1}[0].
\end{equation}
To see this, we choose integers $a, b$ such that $q_1a-p_1b=1$. Since $w_1$ has weight $-q_1$ and $w_2$ has weight $p_1$ on the fibers of $L_\pm^{-}$, the rational function $w_1^{a}w_2^{-b}$ has weight $1$, so it defines a section of both $L_-$ and $L_+$. On $X_-$ it vanishes at $w_2=0$ with order $-b$ on the chart upstairs so it contributes to $-\frac{b}{q_1}[0]$. Similarly on $X_+$ it contributes to $\frac{a}{p_1}[0]$. Adding together yields \eqref{e:model L change}.

In our global setting, the above discussion implies that
\begin{equation}
    D-(1-\frac{1}{q_1})Q=D'-(1-\frac{1}{p_1})Q
\end{equation}
and 
\begin{equation}
    L-L'=\frac{1}{p_1q_1}Q
\end{equation}
In particular, 
\begin{equation}
    K_X+D=K_{X'}+D'+\frac{z_j}{p_1q_1}Q.
\end{equation}

\item (Type II, $m>1$, $l>1$) In this case we have a flipping birational map 
\[
\begin{tikzcd}
 X \arrow[rr,dashed, "{\pi}"] \arrow[dr] && X' \arrow[dl] \\
& Y &
\end{tikzcd}
\]
Here $Y$ is the (singular) K\"ahler quotient at the critical value $z_j$. Since $\pi$ and $\pi^{-1}$ are both defined away from Zariski closed sets of codimension at least 2, we have 
\begin{equation}\label{change of divisors under flips}
    \pi_*D=D', \pi_*K_{X}=K_{X'}.
\end{equation}

\item (Type III, $m>1, l=1$) In this case there is a contraction morphism $\pi: X\rightarrow X'$. The exceptional divisor $E$ is a weighted projective space bundle over $Q$. Each fiber is isomorphic to $\mathbb P(\frac{q_1}{r}, \cdots, \frac{q_m}{r})$. To compare the log canonical divisors we again refer to the model situation first. 

Consider the action of $\C^*$ on $\C^{m+1}_{q_1, \cdots, q_m, p_1}$ with weights $(-q_1, \cdots,-q_m, p_1)$. We have two GIT quotients $X_-=\{(w_1, \cdots, w_{m})\neq 0\}/\C^*$ and $X_+=\{w_{m+1}=1\}/\mathbb Z_{p_1}$. Since $\gcd (p_1, r)=1$ the only possible codimension 1 orbifold loci is given by the reduced divisor $E$ set-theoretically cut out by $\{w_{m+1}=0\}$ in $ X_-$. Notice that $E$ is isomorphic to the weighted projective space $\mathbb P(\frac{q_1}{r}, \cdots, \frac{q_m}{r})$. It is also the exceptional set of the natural contraction map $X_-\rightarrow X_+$. The boundary divisor $D_-=(1-\frac{1}{r})E$ and $D_+=0$. We claim that 
\begin{equation}\label{e:model L-}
    L_-=\frac{1}{p_1r}E.
\end{equation}
To see this we consider the function $w_{m+1}$, which has weight $p_1$ on the fibers of $L_-^{-1}$, so it defines a section of $L_-^{p_1}$. At the same time working in a local chart, say $\{w_1=1\}/\mathbb Z_{q_1}$ of $X_-$, one sees that upstairs $w_{m+1}$ has vanishing order $1$, but the zero set $\{w_{m+1}=0\}$ has an isotropy group $\mathbb Z_r$. So it really defines the divisor $\frac{1}{r}E$. Then \eqref{e:model L-} follows.

In our global case, we then have 
\begin{equation}\label{change of divisor under contraction}
  \pi_*K_X=K_{X'},\quad  D=\pi_*^{-1}D'+(1-\frac{1}{r})E,
\end{equation}
and 
\begin{equation}
    L=\pi^*L'+\frac{1}{p_1r}E. 
\end{equation}
Notice also that $-(K_X+D)+z_j L$ is trivial along the weighted projective fibers of $E$, it follows that
\begin{equation}
    K_X+D=\pi^*(K_{X'}+D')+a E,
\end{equation}
where  $$a=\frac{z_j}{p_1r}=\frac{\sum_\beta q_\beta-p_1}{p_1r}>0.$$

\item (Type IV, $m=1, l>1$)
In this case, there is a contraction morphism $\pi: X'\rightarrow X$.  The exceptional divisor $E$ is a weighted projective space bundle over $Q$. Each fiber is isomorphic to $\mathbb P(\frac{p_1}{s}, \cdots, \frac{p_l}{s})$. Similar to the Type III situation we find that 
\begin{equation}\label{change of D under extraction}
    D'=\pi_*^{-1}D+(1-\frac{1}{s})E,
\end{equation}
and 
\begin{equation}
    L'=\pi^*L-\frac{1}{q_1s}E. 
\end{equation}
Also one can compute
\begin{equation}\label{change of divisor under extraction}
    K_{X'}+D'=\pi^*(K_{X}+D)+a E,
\end{equation}
where $$a=-\frac{z_j}{q_1 s}=\frac{\sum_\alpha p_\alpha-q_1}{q_1 s}<0.$$

\end{itemize}

\begin{remark}
    If we only have Type II and Type III critical values, then as $z$ increases we are essentially running a minimal model program along the $\mathbb Q$-divisor $L$. 
\end{remark}

  \begin{lemma}\label{lem--main}
      There exists a $\delta>0$ such that for all regular value $z>0$ one can find an effective  $\mathbb Q$-divisor $\Delta_z$ on $X_z$ such that the following hold
      \begin{itemize}
          \item The pair $(X_z, D_z+\Delta_z)$ is $\delta$-lc, i.e., the log discrepancy of any divisor over $X$ is at least $\delta$;
          \item $K_{X_z}+D_z+\Delta_z\sim_{\mathbb Q}0$;
          \item $\Delta_z$ is a big divisor. 
      \end{itemize}
  \end{lemma}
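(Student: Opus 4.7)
The plan is to apply Birkar's theory of complements to each klt pair $(X_z, D_z)$, after first establishing that the anti-log-canonical class $-(K_{X_z}+D_z)$ is big.

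To show bigness, observe that $A_z := -(K_{X_z}+D_z)+zL_z$ is ample by \eqref{e:divisor equation}. The line bundle $L_z$ is nef (witnessed by the positive representative $\tfrac{1}{2\pi}\partial_z\omega_z$), and $aL_z-(K_{X_z}+D_z)=(a-z)L_z+A_z$ is ample for $a\geq z$, so by the Kawamata base-point-free theorem applied to $(X_z, D_z)$, the class $L_z$ is semi-ample. For $m$ large with $mzL_z$ base-point-free, pick a section $s\in H^0(mzL_z)$ and form the short exact sequence
\begin{equation*}
0 \to \mathcal{O}_{X_z}(-m(K_{X_z}+D_z)) \xrightarrow{\cdot\, s} \mathcal{O}_{X_z}(mA_z) \to \mathcal{Q} \to 0,
\end{equation*}
with $\mathcal{Q}$ supported on the effective divisor $\mathrm{div}(s)\sim mzL_z$. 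Since $h^0(mA_z) \sim m^n\,\mathrm{vol}(A_z)/n!$ while the cokernel contributes only $O(m^{n-1})$, we conclude $h^0(-m(K_{X_z}+D_z)) \geq c\, m^n$ for some $c>0$, i.e., $-(K_{X_z}+D_z)$ is big.

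Next, since $(X_z, D_z)$ is a klt pair of Fano type with $D_z$-coefficients in the standard DCC set $\{1-1/r : r\in\mathbb{Z}_{\geq 1}\}$, Birkar's boundedness of complements theorem supplies a uniform integer $N$ (depending only on $n$) and an effective $\mathbb{Q}$-divisor $\Gamma_z$ with $(X_z, D_z+\Gamma_z)$ lc and $N(K_{X_z}+D_z+\Gamma_z)\sim 0$. I would then upgrade to the desired $\delta$-lc pair by perturbation: set $\Delta_z := (1-\epsilon)\Gamma_z+\epsilon H_z$, where $H_z$ is a general $\mathbb{Q}$-divisor in the class of $-(K_{X_z}+D_z)$, which exists by Kodaira's lemma (decomposing the big class as ample plus effective). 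A Bertini argument on the ample part then makes $(X_z, D_z+\Delta_z)$ have $\delta$-lc singularities, together with $\Delta_z$ being big.

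The main obstacle is proving that $\delta>0$ can be chosen uniformly in $z$. Birkar's theorem provides a uniform $N$, but controlling the perturbation uniformly requires either invoking Birkar's $\epsilon$-lc refinement of complements or an inductive argument across critical values exploiting the classification of Types I--IV in the proof of Theorem \ref{t:main}: Type I transitions preserve $D+\Delta$ exactly (via the weight identity $z_j=q_1-p_1$), Type II transitions are flips preserving log discrepancies off a codimension-two locus, and Types III/IV transitions introduce or contract a single exceptional divisor whose contribution to the discrepancy change is controlled by the local weights $p_\alpha, q_\beta$.
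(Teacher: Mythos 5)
Your bigness argument has a concrete gap. The short exact sequence
\begin{equation*}
0 \to \mathcal{O}_{X_z}(-m(K_{X_z}+D_z)) \xrightarrow{\cdot s} \mathcal{O}_{X_z}(mA_z) \to \mathcal{Q}\to 0
\end{equation*}
with $s\in H^0(mzL_z)$ does \emph{not} yield $h^0(\mathcal{Q})=O(m^{n-2})$: the support $\mathrm{div}(s)$ has degree growing linearly in $m$, so $h^0(\mathcal{Q})$ can in fact be of the same order $m^{n-1}$ (where $n-1=\dim X_z$) as $h^0(mA_z)$, in which case the subtraction gives nothing. A toy example: $X_z=\mathbb P^1$, $L_z=\mathcal{O}(1)$, $-(K_{X_z}+D_z)$ trivial, and $A_z=\mathcal{O}(z)$ ample — every hypothesis you use is satisfied, but $-(K_{X_z}+D_z)$ is not big. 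The asymptotic Riemann--Roch computation simply recovers $(-(K_{X_z}+D_z))^{n-1}$ and thus cannot distinguish ``nef and big'' from ``nef but not big''. The paper's bigness claim rests instead on a genuinely geometric input you don't use: since $\min_X u<0$, the moment-map level $\{u=0\}$ is not degenerate and the symplectic volume of $(X_z,\omega_z)$ stays bounded away from $0$ as $z\to 0^+$, which gives $(-(K_{X_z}+D_z))^{n-1}>0$ for $z$ close to $0$; bigness for larger $z$ is then \emph{propagated} rather than re-proved.

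On the remaining parts you are, by your own admission, not offering a proof but two candidate strategies. The Birkar-complement route is a genuinely different strategy from the paper's, but you don't close it: boundedness of $N$-complements gives an lc pair, and the perturbation $\Delta_z=(1-\epsilon)\Gamma_z+\epsilon H_z$ does not automatically produce a $\delta$-lc pair with $\delta$ \emph{uniform} in $z$; you'd need something like an $(\epsilon,n)$-complement theorem here, which you don't supply. Your fallback — an induction across critical values $z_j$ exploiting the Type I--IV wall-crossings — is in fact exactly what the paper does: start at small $z>0$, construct $\Delta_z$ by Bertini on the big nef class $-(K_{X_z}+D_z)$, and then at each wall transform $\Delta$ explicitly (equal, pushforward, pullback plus exceptional, or pullback minus a negative-multiple exceptional, depending on the type), checking via \cite[Lemmas 2.30, 3.38]{KM} that log discrepancies are preserved. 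Note also that the paper uses Birkar's boundedness \cite[Corollary 1.4]{Birkar2021} only \emph{after} this lemma — precisely to deduce boundedness of the family $\{X_z\}$ from the lemma's conclusion — whereas your outline would need some a priori bounded-family input to make the complement machinery uniform, which is circular in this context.
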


\begin{proof}
  We start with a positive regular value $z$ sufficiently close to $0$.  From equation \eqref{e:divisor equation} we see that $-(K_{X_z}+D_z)$ is the limit of ample divisors so it is itself a nef divisor. Since $\min_X u<0$ the volume of $(X_z,\omega_z)$ as $z\rightarrow 0$ does not go to zero. So $-(K_{X_z}+D_z)$ is big.  Since $(X_z,D_z)$ is klt, by Bertini's theorem we may find an effective $\mathbb Q$-divisor $\Delta_z$ on $X_z$ such that the statements hold for some $\delta>0$ small. 

  Next, we prove the properties continue to hold with the same $\delta$ when we cross a critical value $z_j$. There are 4 cases 
  \begin{itemize}[leftmargin=*]
  \item(Type I) We simply let $\Delta'=\Delta+\frac{z_j}{p_1q_1}Q$ and we have $D+\Delta=D'+\Delta'$. Then $(X', D'+\Delta')$ is naturally isomorphic to $(X, D+\Delta)$ and $\Delta'$ is big.
    \item(Type II) We let $\Delta'=\pi_*\Delta$, which is big since $\Delta$ is big. Then by \eqref{change of divisors under flips}, we know that $K_{X'}+D'+\Delta'\sim_{\mathbb Q}0$. By \cite[Lemma 3.38]{KM}, we see the log discrepancy of $(X', D'+\Delta')$ agrees with that of $(X, D+\Delta).$

  \item(Type III) We let $\Delta'=\pi_*\Delta$, which is big since $\Delta$ is big. Then by \eqref{change of divisor under contraction}, we know that $K_{X'}+D'+\Delta'\sim_{\mathbb Q}0$. By \cite[Lemma 2.30]{KM} we see the log discrepancy of $(X', D'+\Delta')$ agrees with that of $(X, D+\Delta).$
  \item(Type IV) We let $\Delta'=\pi^*\Delta-aE$. It is crucial here that $a<0$ so that $\Delta'$ is again big and effective. Then by \eqref{change of divisor under extraction}, we have $K_{X'}+D'+\Delta'\sim_{\mathbb Q}0$. Again by \cite[Lemma 2.30]{KM}, the log  discrepancy of $(X', D'+\Delta')$ agrees with that of $(X, D+\Delta).$
  \end{itemize}
  \end{proof}

An immediate consequence is that the coefficients of $D_z$ are uniformly bounded above by $1-\delta$. So at each codimension 1 orbifold point, the order is uniformly bounded by $\frac{1}{\delta}$.  In particular,  the Type I critical values are uniformly bounded. 

Now we apply a deep result in \cite[Corollary 1.4]{Birkar2021}, which implies that the set of all $X_z$  above belongs to a bounded family of normal projective varieties.  We claim that the order of the orbifold group at every point in the underlying variety $X_z$ is uniformly bounded above. There might be many ways of seeing this, for example, a quick way is to apply the lower semi-continuity result of \cite{BlumLiu} (together with \cite{deFernex-Ein-Mustata}) to get a uniform lower bound on the normalized volume at points of $X_z$.

 Now Proposition \ref{p:compact critical set} follows from

\begin{lemma}
    Given a critical value $z_j$ of Type II, III or IV, there is a quotient singularity in the underlying variety  $X_z$, for a regular value $z$ close to $z_j$, with the orbifold order at least $\frac{\delta^{n-1} z_j}{n-1}$. In particular, its normalized volume is at most $\frac{(n-1)^{n}}{\delta^{n-1}z_j}$.
\end{lemma}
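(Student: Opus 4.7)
The plan is to exhibit, for each of Types II, III, IV, a specific point $p\in X_z$ (for a regular value $z$ just below $z_j$, so that via the $J\eta$-flow $X_z$ is identified with $X=X_-$) whose analytic-local model is a cyclic quotient singularity of the form $\mathbb C^{n-1-k}\times(\mathbb C^k/\mathbb Z_{q_1})$ with $k\geq 2$ and $q_1=\max_\beta q_\beta$. Concretely, in the linear normal-bundle model around $Q$ with $\mathbb C^*$-weights $(-q_1,\ldots,-q_m,p_1,\ldots,p_l)$ on $\mathbb C^{m+l}$, after reordering so that $q_1$ is the maximal negative weight, I take the image in $X_-$ of the coordinate point where only $w_1$ is nonzero, for Types II and III; for Type IV ($m=1$) I take the origin of the chart $X_-=\{w_{l+1}=1\}/\mathbb Z_{q_1}$, which is the stratum $Q\subset X$ contracted from $E\subset X'$.

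The key step is to verify that the $\mathbb Z_{q_1}$-stabilizer at $p$ acts faithfully on the transverse tangent directions, so that $q_1$ is truly the effective orbifold order of the underlying variety at $p$. The tangent weights (mod $q_1$) are $(-q_2,\ldots,-q_m,p_1,\ldots,p_l)$ in Types II/III and $(p_1,\ldots,p_l)$ in Type IV, and faithfulness is equivalent to the $\gcd$ of $q_1$ with these weights being $1$. In Types II/III this is precisely the effectiveness of the $S^1$-action on $X$; in Type IV it is the coprimality $\gcd(r,s)=1$ with $r=q_1$, a standing hypothesis. Granted this, I combine the weight identity $\sum_\beta q_\beta-\sum_\alpha p_\alpha=z_j$ with $p_\alpha\geq 1$ to get $\sum_\beta q_\beta\geq z_j$, and the dimension constraint $m+l\leq n$ (from $\dim_{\mathbb C}Q\geq 0$) to get $m\leq n-1$ in Types II/III (hence $q_1\geq z_j/(n-1)$) and $m=1$ in Type IV (hence $q_1\geq z_j$). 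In all cases $q_1\geq z_j/(n-1)\geq \delta^{n-1}z_j/(n-1)$.

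The normalized volume bound is then immediate: with $k=m\geq 2$ (Types II/III) or $k=l\geq 2$ (Type IV), the cover $\mathbb C^{n-1}\to \mathbb C^{n-1}/\mathbb Z_{q_1}$ is étale in codimension one (the $\mathbb Z_{q_1}$-fixed locus has codimension $k\geq 2$), so it is crepant, and the standard transformation rule for $\widehat{\mathrm{vol}}$ under crepant finite covers gives $\widehat{\mathrm{vol}}(p,X_z)=(n-1)^{n-1}/q_1\leq (n-1)^n/(\delta^{n-1}z_j)$. The main obstacle in this scheme is the faithfulness verification in the second paragraph: without the coprimality input, the stabilizer $\mathbb Z_{q_1}$ might act through a proper quotient on the tangent space, collapsing the orbifold order (potentially leaving only a smooth point with no nontrivial lower bound on singularity); it is exactly the prime-by-prime gcd argument, using either the effectiveness of the $S^1$-action or $\gcd(r,s)=1$, that forces the effective order to be the full $q_1$.
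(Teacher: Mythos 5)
There is a genuine gap: you conclude that the orbifold order of the \emph{underlying variety} $X_z$ at $p$ equals the full stabilizer order $q_1$, but faithfulness of the $\mathbb{Z}_{q_1}$-action on the transverse tangent space is not sufficient for that conclusion. A faithful cyclic action can still contain pseudo-reflections, i.e.\ elements fixing a hyperplane; by Chevalley--Shephard--Todd the subgroup they generate yields a smooth quotient, and therefore contributes only to the boundary divisor $D_z$ (codimension-one orbifold loci), not to a quotient singularity of the underlying variety. Your assertion that $\mathbb{C}^{n-1}\to\mathbb{C}^{n-1}/\mathbb{Z}_{q_1}$ is \'etale in codimension one is justified from the fixed locus of the \emph{whole} group having codimension at least $2$, but \'etale in codimension one requires that no individual group element fix a hyperplane, which is strictly stronger and can fail. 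Concretely: take $n=4$, $Q$ a point, normal weights $(-6,-2,3)$ (so $m=2$, $l=1$, $z_j=5$, $\gcd(6,2,3)=1$). At the point where only $w_1\neq0$ the stabilizer $\mathbb{Z}_6$ acts on $(w_2,w_3)$ with weights $(4,3)\pmod 6$, faithfully, yet $\zeta^2$ and $\zeta^3$ are both pseudo-reflections and together generate all of $\mathbb{Z}_6$, so the underlying variety is \emph{smooth} at that point.

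This is exactly where the factor $\delta^{n-1}$ enters, and your proof never invokes the $\delta$-lc bound that Lemma~\ref{lem--main} was proved to supply. The paper's argument bounds the pseudo-reflection subgroup: the elements fixing every transverse coordinate except the one with weight $a_i$ form a cyclic subgroup of order $\gcd(q_1,a_2,\ldots,\widehat{a_i},\ldots)$, which is a gcd of $m+l-1$ of the original weights and hence at most $1/\delta$ (since the corresponding coefficient of $D_z$ is at most $1-\delta$). There are at most $n-1$ such subgroups, all inside the cyclic group $\mathbb{Z}_{q_1}$, so the pseudo-reflection subgroup has order at most $\delta^{-(n-1)}$, and the genuine quotient-singularity order at $p$ is at least $\delta^{n-1}q_1$. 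Without this step the assertion that the order is the full $q_1$ is false in general; the lemma's $\delta^{n-1}$ is an honest loss that must be accounted for, not a weakening you can ignore. (A smaller point: your local model $\mathbb{C}^{n-1-k}\times(\mathbb{C}^k/\mathbb{Z}_{q_1})$ with $k=m$ in Types II/III is also off --- the $\mathbb{Z}_{q_1}$-stabilizer acts on all $m+l-1$ transverse directions with weights $(-q_2,\ldots,-q_m,p_1,\ldots,p_l)$, not just on $m$ of them --- but the pseudo-reflection issue is the substantive one.)
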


\begin{proof}
   Suppose at $z_j$ the weights given by $-q_1, \cdots, -q_m, p_1, \cdots, p_l$. 
    Then we have $\sum_\beta q_\beta\geq z_j$. In particular there is a $\beta$ with $q_\beta\geq \frac{z_j}{n-1}$. Notice that we need to deal with codimension 1 orbifold loci. Since $\gcd(q_1, \cdots, q_m, p_1, \cdots, p_l)=1$ and by the above discussion the $\gcd$ of any $m+l-1$ weights is at most $\frac{1}{\delta}$,   it is easy to see that on $X_z$ for $z\in (z_j-\epsilon, z_j)$ there is at least an orbifold singularity with order $\delta^{n-1}q_\beta\geq \frac{\delta^{n-1} z_j}{n-1}$. 
\end{proof}

\begin{remark}
 Since the above arguments do not use very detailed geometric properties of $X$, the technique is robust so is likely to extend further. For example, with more effort one expects the same proof works for a suitable class of singular K\"ahler-Ricci shrinkers  (c.f. Section \ref{section--conjectures}).
\end{remark}

    The idea of using Kähler reduction to study Kähler-Ricci shrinkers, while quite natural, appears to be novel. As another application, we show that every smooth Kähler-Ricci shrinker is simply-connected. This is also proved independently by Esparza \cite{Esparza2} using a version of Hirzebruch-Riemann-Roch formula on non-compact manifolds. 

\begin{proposition}
     Every smooth Kähler-Ricci shrinker $X$ is simply connected.
\end{proposition}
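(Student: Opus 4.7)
The plan is to use the moment map $u=u_\eta$ constructed in the proof of Theorem \ref{t:main} as a proper Morse-Bott function on $X$ to reduce the simple-connectivity of $X$ to that of the minimum critical submanifold $Q_1$, and then to show $Q_1$ is Fano.

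Since $u$ is the moment map of an isometric $S^1$-action, it is a Morse-Bott function with only even indices, and by Proposition \ref{p:compact critical set} the critical set $Q$ is compact, so there are finitely many critical submanifolds $Q_1, \ldots, Q_M$ with critical values $z_1 < \cdots < z_M$. The minimum $Q_1 = u^{-1}(z_1)$ is connected, as it is a level set of the proper moment map $u$ (Hilgert-Neeb-Plank). I would first show $Q_1$ is the unique index-$0$ critical submanifold: any other index-$0$ component $Q_i$ with $z_i > z_1$ would contribute a new connected component to the sublevel set $X_{\leq z_i + \epsilon}$, disjoint from the rest, contradicting the connectedness of the regular level set $u^{-1}(z_i + \epsilon)$ (which itself forces connectedness of the sublevel set, using properness of $u$ and noncompactness of $X$). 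Consequently, all $Q_i$ with $i \geq 2$ have index $\lambda_i \geq 2$.

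Next I would apply the Morse-Bott/Bia{\l}ynicki-Birula stable manifold decomposition $X = \bigsqcup_i W^s(Q_i)$ under the gradient flow of $-\nabla u$. The open dense stratum $W^s(Q_1)$ is a holomorphic affine bundle over $Q_1$ (identified with the total space of the normal bundle $N_{Q_1/X}$ via the flow), hence deformation retracts onto $Q_1$. The other strata $W^s(Q_i)$ have real codimension $\lambda_i \geq 2$, so their union $X \setminus W^s(Q_1)$ is a closed subset of real codimension $\geq 2$ in the smooth $X$. Therefore $W^s(Q_1) \hookrightarrow X$ induces an isomorphism on $\pi_1$, giving $\pi_1(X) \cong \pi_1(Q_1)$. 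To finish I must show $\pi_1(Q_1)=0$. Since $\eta$ can be taken generic in $\mathrm{Lie}(\mathbb T)$ with fixed locus equal to the full $\mathbb T$-fixed set, the torus $\mathbb T$ fixes $Q_1$ pointwise; in particular $\xi|_{Q_1} = 0$ and $f|_{Q_1}$ is constant. Also $Q_1$, as a closed complex submanifold of the quasi-projective $X$, is a smooth projective variety, and is totally geodesic in $X$ as the fixed locus of an isometric action. The Gauss equation for totally geodesic complex submanifolds combined with the restriction of the soliton equation gives
\[
	\mathrm{Ric}(\omega|_{Q_1}) + \Theta(\det N_{Q_1/X}) = \omega|_{Q_1},
\]
so in $H^2(Q_1;\mathbb R)$ one has $c_1(Q_1) = [\omega|_{Q_1}]/(2\pi) - c_1(N_{Q_1/X})$. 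To verify $Q_1$ is Fano I would use the K\"ahler reduction $X_z$ for $z$ slightly above $z_1$: $X_z$ is the weighted projective bundle $\mathbb P_{(p_1,\ldots,p_r)}(N_{Q_1/X})$ over $Q_1$, and \eqref{e:divisor equation} gives ampleness of $-(K_{X_z}+D_z)+zL_z$. Combining the canonical divisor formula for weighted projective bundles with the explicit description of $L_z$ coming from the $S^1$-reduction, I would push this ampleness down to $Q_1$ to conclude that $-K_{Q_1}$ is ample. Then $Q_1$ is a smooth Fano manifold, and so simply connected by Kobayashi's theorem (or the Campana / Koll\'ar-Miyaoka-Mori result on smooth Fano varieties).

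The main obstacle is the last step: converting the ampleness of $-(K_{X_z}+D_z)+zL_z$ on the weighted projective bundle $X_z \to Q_1$ into ampleness of $-K_{Q_1}$ on the base. This requires a careful pushforward and adjunction analysis on the weighted projective bundle, keeping track of the weights $p_j$, the orbifold divisor $D_z$, and the precise relation of $L_z$ to the tautological bundle, and then combining the resulting numerical positivity with the Gauss-equation identity above. Once this is settled the argument closes: $\pi_1(X)\cong \pi_1(Q_1)=0$.
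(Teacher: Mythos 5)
Your argument takes a genuinely different route from the paper's. The paper handles the non-compact case by (i) invoking Wylie's theorem that $\pi_1(X)$ is finite, (ii) lifting the $S^1$-action to the universal cover $\widetilde X$, (iii) observing (by contracting each orbit along $-\nabla u$) that the reduced space at a regular level $\epsilon$ near $0$ lifts to a nontrivial covering whenever $\widetilde X \to X$ is nontrivial, and (iv) concluding by applying the Hacon--McKernan theorem that the klt weak Fano pair $(X_\epsilon, D_\epsilon)$ is simply connected. Your proposal instead uses the Bia\l{}ynicki-Birula / Morse--Bott stable manifold decomposition under $-\nabla u$ to reduce $\pi_1(X)$ to $\pi_1(Q_1)$ where $Q_1$ is the minimum critical submanifold, and then aims to show $Q_1$ is a smooth Fano manifold. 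The reduction itself is sound (one small correction: with only real codimension $\ge 2$ strata one gets a surjection $\pi_1(Q_1)\twoheadrightarrow \pi_1(X)$, not an isomorphism, but surjectivity suffices).

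However there is a genuine gap precisely at the step you flag as the ``main obstacle'': you have not established that $Q_1$ is Fano. The Gauss-equation identity you write, $\mathrm{Ric}(\omega|_{Q_1}) + \Theta(\det N_{Q_1/X}) = \omega|_{Q_1}$, is correct for the totally geodesic fixed locus, but it does not by itself carry any positivity information: combined with $[\omega] = 2\pi c_1(-K_X)$ and adjunction it reduces to the tautology $c_1(Q_1) = c_1(Q_1)$. The content must therefore come from the weighted-projective-bundle pushforward of the ampleness of $-(K_{X_z}+D_z)+zL_z$ on $X_z$ for $z$ slightly above $z_1$. Since $z_1<0$, the argument the paper uses near $z=0$ (that $-(K_{X_z}+D_z)$, being a limit of ample classes, is big and nef) does not transfer directly to the slice near $z_1$, so the required positivity on $Q_1$ needs a separate and fairly delicate computation tracking $L_z$, the orbifold divisor $D_z$, and the weights $p_i$; this is not carried out. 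Until that computation is done, the final assertion $\pi_1(Q_1)=0$ is unsupported, and the proof is incomplete. By contrast, the paper's use of the regular-level quotient near $z=0$ together with Hacon--McKernan sidesteps any analysis of the minimum critical set.
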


\begin{proof}
    If \( X \) is compact, it is a standard result that a Fano manifold is simply connected. If \( X \) is non-compact, it is known that \( X \) has a finite fundamental group \cite{Wylie}. Let \( \phi: \widetilde{X} \to X \) denote the universal cover, where \( \widetilde{X} \) inherits the Kähler-Ricci shrinker structure from \( X \). As before, we can perturb the soliton vector field \( \xi \) to obtain a holomorphic vector field \( \eta \), which generates an effective \( S^1 \)-action with a proper Hamiltonian potential \( u \). This \( S^1 \)-action naturally lifts to \( \widetilde{X} \) with Hamiltonian potential given by \( \phi^*u \).  

We perform Kähler reduction for regular values \( \epsilon \) of \( u \) near 0. As mentioned earlier, since \( u \) is proper, it follows that each of its level sets is connected \cite[Theorem 4.1]{HNP}. We claim that as long as \( \phi: \widetilde{X} \to X \) is a nontrivial covering, the quotient \( (\phi^*u)^{-1}(\epsilon)/S^1 \) is a nontrivial covering of \( u^{-1}(\epsilon)/S^1 \). To prove this, it is sufficient to show that the lift of each \( S^1 \) orbit on \( X \) remains a loop in \( \widetilde{X} \).  
Notice that the \( S^1 \) orbit on \( X \) is generated by the flow of \( J\nabla u \). We can use the negative gradient flow line of \( u \) to contract this \( S^1 \) orbit to a point, showing that this orbit is homotopically trivial in \( X \). By the properties of covering spaces, the lift of each \( S^1 \) orbit on \( X \) to \( \widetilde{X} \) must also remain a loop.  

Let \( X_\epsilon = u^{-1}(\epsilon)/S^1 \). By the proof of Lemma \ref{lem--main}, we know that for sufficiently small \( \epsilon \), the pair \( (X_\epsilon, D_{\epsilon}) \) is klt, and \( -(K_{X_\epsilon} + D_{\epsilon}) \) is big and nef. Then, by \cite[Corollary 1.14]{HM07}, we conclude that \( X_{\epsilon} \) is simply connected. Therefore, the covering map \( (\phi^*u)^{-1}(\epsilon)/S^1 \to u^{-1}(\epsilon)/S^1 \) must be trivial. Consequently, \( \widetilde{X} \to X \) must also be a trivial covering, implying that \( X \) itself is simply connected.  
\end{proof}

Finally, we give an intrinsic description of the algebraic structure of  $X$ in terms of $\xi$.  The ring $R_X$ of regular functions on $X$ is given by 
\begin{equation} R_X=\bigoplus_{\alpha\in \mathrm{Lie}(\mathbb T)^*} R_{X,\alpha}, 
\end{equation}
where $ R_{X,\alpha}$ consists of homogeneous functions with weight $\alpha$. Notice that  $\langle \alpha,\xi\rangle>0$ for any non-zero $\alpha$ with $ R_{X,\alpha}\neq 0$. The polarized affine cone $Y$ is isomorphic to $\Spec(R_X)$.

Similarly,  the space of algebraic sections of $-mK_X$ is given by
$$\mathfrak R_m:=\bigoplus_{\alpha\in \mathrm{Lie}(\mathbb T)^*} \mathfrak R_{m, \alpha}, $$ where $\mathfrak R_{m, \alpha}$ consists of homogeneous sections with weight $\alpha$. There exists a positive constant $C$ such that $ \mathfrak R_{m, \alpha}\neq 0$ only when $\langle \alpha, \xi\rangle \geq -Cm$.
Now the direct sum 
$$\mathfrak R:=\bigoplus_{m\geq 0} \mathfrak R_m$$
is a finitely generated graded algebra over $R_X$, and $X$ is isomorphic to $\Proj_{\Spec(R_X)}{\mathfrak R}$.

\section{Weighted volume of Fano fibration germs} \label{s:weighted volume}

Fix a Fano fibration germ $(\pi:X\rightarrow Y, p)$ with $\dim X=n$. Fix a positive integer $r$ such that $L=-rK_X$ is a Cartier divisor. Since $\pi$ is proper, for any $m\geq 0$, the push-forward $\pi_*\mathcal O(mL)$ is a coherent sheaf on $Y$. Denote by $R_m$ the stalk of $\pi_*\mathcal O(mL)$ at $p$. Then we obtain a graded $\oo_{Y,p}$-algebra
\begin{equation}\label{e:4-1}
	\mathcal R:= \bigoplus_{m\geq0} R_m=\bigoplus_{m\geq0}(\pi_*\oo(mL))_p.
\end{equation}

Consider a real valuation $v$ on the function field $K(X)$ which is centered in $\pi^{-1}(p)$. This means that if $\mathcal O_v$ is the valuation ring of $v$, then there is a unique scheme-theoretic point $c_X(v) \in X$, the \textit{center} of $v$, such that we have a local inclusion of local rings $\mathcal O_{X,c_X(v)}\hookrightarrow \mathcal O_v$ and the Zariski irreducible closed set $\overline{\{c_X(v)\}}$ is contained in $\pi^{-1}(p)$.  Moreover, in the case when $\pi$ is birational a valuation on $K(X)$ centered in $\pi^{-1}(p)$ is the same as a valuation on $K(Y)$ centered at $p$. 

The valuation $v$  induces a valuation on  $\mathcal R$ as follows. Choose a closed point $q\in \overline{\{c_X(v)\}}$ and trivialize $L$ in an affine neighborhood of $q$ in $X$, then we have injective maps: 
\begin{equation}\label{eq--identification}
	R_m\hookrightarrow \oo_{X,q}\hookrightarrow \oo_{X,c_X(v)}\hookrightarrow K(X).
\end{equation}
It is straightforward to show that the induced valuation on $\mathcal R$ is independent of the choices.
The valuation induces a filtration $\mathcal F_v^{\bullet}$ on the graded algebra $\mathcal R$, by setting \begin{equation*}
		\mathcal F_v^\lambda R_{m}:=\{f\in R_m| v(f)\geq \lambda\}.
	\end{equation*}
Then as in \cite{WNystrom,BoucChen2011,BKMS,BHJ} we consider
\begin{equation}\label{eq--sequence of measure}
	\mathrm{DH}_m:=\frac{1}{(rm)^n}\sum_{\lambda\geq 0}\dim_{\mathbb C} (\mathcal F_v^{\lambda rm} R_{m}/\mathcal F^{>\lambda rm}_vR_{m})\delta_{\lambda}.
\end{equation}

\begin{proposition}\label{lem-convergence of measures}
For each $m\geq 1$, $\mathrm{DH}_m$ defines a Radon measure on $[0, \infty)$. Moreover, as $m\rightarrow\infty$, $\mathrm{DH}_m$ converges weakly to a unique limit measure $\mathrm{DH}(v)$, called the Duistermaat-Heckman measure of $v$.
\end{proposition}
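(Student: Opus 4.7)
The plan is to establish the Radon property first and then bootstrap to weak convergence via a Newton-Okounkov body adapted to the relative setting. First I would observe that the restriction $w := v|_{K(Y)}$ is a valuation on the Noetherian local ring $\mathcal{O}_{Y,p}$ centered at $\mathfrak{m}_p$, so picking a finite generating set for $\mathfrak{m}_p$ yields a uniform lower bound $\alpha_w > 0$ with $w(a)\geq\alpha_w$ for every nonzero $a\in\mathfrak{m}_p$. Since $\pi$ is proper, $\pi_*\mathcal{O}(mL)$ is coherent and each $R_m$ is a finitely generated $\mathcal{O}_{Y,p}$-module; the identity $v(af)=w(a)+v(f)$ makes each $\mathcal{F}^\lambda_v R_m$ an $\mathcal{O}_{Y,p}$-submodule. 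The lower bound on $w$ then forces $\mathfrak{m}_p$ to annihilate the graded pieces $\mathcal{F}^\lambda_v R_m/\mathcal{F}^{>\lambda}_v R_m$, so by Noetherianity these pieces are finite-dimensional $\mathbb{C}$-vector spaces. Combining the inclusion $\mathfrak{m}_p^N R_m\subset\mathcal{F}^{\geq N\alpha_w}_v R_m$ with the fact that $R_m/\mathfrak{m}_p^N R_m$ has finite $\mathbb{C}$-dimension yields $\mathrm{DH}_m([0,T]) = (rm)^{-n}\dim_{\mathbb{C}}(R_m/\mathcal{F}^{>Trm}_v R_m)<\infty$ for every $T>0$, which gives the Radon property.

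For weak convergence I would adapt the Newton-Okounkov body machinery of Boucksom-Chen and Witt Nystr\"om to the present relative setting. The strategy is to enlarge $v$ to a valuation $\nu$ of rational rank $n$ on $K(X)$ whose leading component records the $v$-value, define a Newton-Okounkov body $\Delta\subset\mathbb{R}^n$ for the graded algebra $\mathcal{R}$, and identify the candidate limit $\mathrm{DH}(v)$ with the pushforward of the normalized Lebesgue measure on $\Delta$ under projection to the $v$-coordinate. Pointwise convergence of the distribution functions $\lambda\mapsto (rm)^{-n}\dim_{\mathbb{C}}(R_m/\mathcal{F}^{>\lambda rm}_v R_m)$ to the volume of the corresponding slice of $\Delta$ would then yield weak convergence, and the intrinsic description of $\mathrm{DH}(v)$ as a pushforward secures uniqueness of the limit.

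The hard part will be executing the Okounkov-body step, because $\mathcal{R}$ is a graded algebra over the local ring $\mathcal{O}_{Y,p}$ rather than over $\mathbb{C}$, so the graded pieces $R_m$ are infinite-dimensional over $\mathbb{C}$ and the classical Boucksom-Chen framework does not apply directly. The remedy is to exploit the bound $\mathfrak{m}_p^N R_m\subset\mathcal{F}^{\geq N\alpha_w}_v R_m$ from the Radon step: the filtration data below any fixed level $\lambda rm$ is captured by the finite-dimensional quotient $R_m/\mathfrak{m}_p^{\lceil\lambda rm/\alpha_w\rceil} R_m$. Passing to the $\mathfrak{m}_p$-adic completion of $\mathcal{O}_{Y,p}$, or equivalently working with a compatible system of truncations, reduces the problem to the asymptotics of a graded linear series on the formal neighborhood of $\pi^{-1}(p)$, where the Newton-Okounkov convergence theorems apply. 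Finite generation of $\mathcal{R}$ as an $\mathcal{O}_{Y,p}$-algebra, which follows from $\pi$-ampleness of $-rK_X$, ensures that $\Delta$ has nonempty interior so that the limit measure is nontrivial.
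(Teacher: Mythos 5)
Your Radon-property argument is correct and efficient: restricting $v$ to $K(Y)$, extracting a uniform bound $\alpha_w>0$ on $w|_{\mathfrak m_p}$, and observing that $\mathfrak m_p$ annihilates the graded pieces of the filtration does give $\dim_{\mathbb C}(R_m/\mathcal F_v^{>Trm}R_m)<\infty$. (Note that when $\dim Y=0$ this collapses since $\mathfrak m_p=0$, but there $R_m$ is already finite-dimensional, which is the compact case cited to Boucksom--Chen.) The paper obtains the same finiteness as a byproduct of a semigroup count, so the two Radon arguments differ but are comparably elementary.

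The weak-convergence step, however, has a genuine gap. Your plan is to enlarge $v$ to a rank-$n$ valuation, build a Newton--Okounkov body $\Delta$, and pass to the $\mathfrak m_p$-adic completion to deal with the infinite-dimensionality of $R_m$. But the Lazarsfeld--Musta\c{t}\u{a}/Kaveh--Khovanskii convergence theorems are statements about semigroups in $\mathbb N^{n+1}$ satisfying a boundedness condition (Condition (2) of the theorem the paper cites: the semigroup must lie in a cone generated by finitely many vectors $(v_i,1)$) and a group-generation condition (Condition (3)); they are not formulated over formal completions, and ``reducing to the formal neighborhood'' does not by itself produce such a semigroup. The paper does something more concrete: it chooses a log resolution and a rank-$n$ quasi-monomial valuation $\vv{\mathrm{val}}$ near the center of $v$, proves the one-sided comparison $v\geq\tilde v$, and — this is the key step your proposal does not address — establishes the linear growth estimate $\sum_i\mathrm{val}_i(s)\leq C(\tilde v(s)+m)$ via a blowup and the Boucksom--K\"uronya--MacLean--Szemberg bound on quasi-monomial valuations along a graded linear series. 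That estimate is exactly what lets one cut off the (a priori infinite) semigroup $\vv{\mathrm{val}}(R_m)$ at level $\sum_i d_i\leq N(t+1)m$ without losing any contribution to $\dim(R_m/\mathcal F_v^{tm}R_m)$, thereby manufacturing two semigroups $\Gamma\supset\Gamma'$ that satisfy Condition (2). Your truncation $R_m/\mathfrak m_p^{\lceil\lambda rm/\alpha_w\rceil}R_m$ has the right flavor but is a double limit in $m$ and the truncation level, and you do not explain how to control it. Finally, Condition (3) for the subsemigroup $\Gamma'$ is verified in the paper by a nontrivial trick: using $\dim Y>0$ to pick $h\in\mathfrak m_p$ with $v(h)\geq t+1$, whose powers shift arbitrary sections into $\mathcal F_v^{tm}R_m$ while keeping the $\vv{\mathrm{val}}$-coordinates under control. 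Without these three ingredients — the rank-$n$ valuation built at a smooth point of a log resolution, the linear growth lemma, and the $h$-shifting argument — the semigroup hypotheses cannot be verified and the Okounkov-body convergence theorem does not apply.
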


This can be proved by following the argument in \cite{BoucChen2011,Cutkosky2013,LM2009,KK08}. For the reader's convenience, we provide a proof in the Appendix. From the proof, we  also see that the measure $\mathrm{DH}(v)$ is independent of the choice of $r$, the positive integer chosen at the beginning of this section to ensure $rK_X$ is Cartier.

We will use the notion of \emph{log discrepancy} $A_X(\cdot)\in [0,\infty]$ for valuations, as defined in \cite[Section 5]{JM2012}.
We only remark here that if $X$ has klt singularities and $v$ is nontrivial, then $A_X(v)$ is strictly positive. Moreover
the definition of log discrepancy has been generalized to valuations on normal varieties \cite{deFhacon,BdFFU}, although here we focus on the case where $X$ has klt singularities.

Let $(\pi: X\rightarrow Y, p)$ be a Fano fibration germ. We denote by $\operatorname{Val}_{\pi, p}^{*}$  the set of valuations $v$ with $A_X(v)<\infty$ and centered in $\pi^{-1}(p)$. 

\begin{definition}[Weighted volume of a valuation]\label{def--weighted volume}
Given  $v\in \operatorname{Val}_{\pi, p}^{*}$, we define its weighted volume by
\begin{equation}
	\Bbb W(v):= e^{A_X(v)}\int_\mathbb Re^{-x}\operatorname{DH}(v) .
\end{equation}
\end{definition}
\begin{remark}
    When we need to emphasize the dependence on the Fano fibration, we will use the notation ${\Bbb W}_{\pi}(v)$. 
\end{remark}
 From the estimate \eqref{estimate for the length} in the appendix, it is easy to see that ${\Bbb W}(v)$ is always finite and 
\begin{equation}\label{integral for each m measure}
    {\Bbb W}(v)=e^{A_X(v)}\lim_{m\rightarrow \infty}\int_{\mathbb R}e^{-x}\mathrm{DH}_m(v).
\end{equation}
We will be interested in minimizing  ${\Bbb W}$ on the space $\Val^*_{\pi, p}$. This is closely related to the study of K\"ahler-Ricci shrinkers and K\"ahler-Ricci flows. We will explain more details in Section \ref{section--conjectures}. 
The above definition incorporates several special situations studied previously:

\begin{example} \label{r:two minimization coincide}
    If $\pi$ is the identity map, then $L$ is trivial and $R_m=\mathcal O_{X, p}$ for all $m\geq 0$. One can check that 
    \begin{equation}
        {\Bbb W}(v)=e^{A_X(v)}\operatorname{vol}(v),
    \end{equation}where 
    \begin{equation}
    	\mathrm{vol}(v):=\lim_{m\rightarrow \infty}\frac{n!}{m^n}\dim_{\mathbb C}(\mathcal O_{X,p}/\mathfrak a_{m}(v)).  \end{equation}Here $\mathfrak{a}_m(v):=\{f\in \oo_{X,p}:v(f)\geq m\}$ denotes the valuation ideal of $v$.
    This is a variant of the normalized volume defined by  Li \cite{Li2015}, but they lead to the same minimization problem. The reason is that the normalized volume is scaling invariant and to minimize ${\Bbb W}(v)$ for all valuations, it is sufficient to minimize it over the slice 
    \begin{equation}
        \{v\in \operatorname{Val}_{\pi, p}^{*}\mid A_X(v)=n\}.    \end{equation} The functional ${\Bbb W}$ has the additional advantage that it is \text{not} scaling invariant and it is possible to obtain the strict uniqueness of a minimizer. 
    \end{example}

\begin{example}
      If $Y$ is a point, then $X$ is a Fano variety. In this case, the Duistermaat-Heckman measure $\operatorname{DH}$ is not normalized to be a probability measure, and we have
      \begin{equation}\label{eq--relation with beta invariant}
          {\Bbb W}(v)=\frac{(-K_X)^n}{n!}e^{\widetilde \beta(v)}
      \end{equation}where $\widetilde \beta$ is the functional introduced  in \cite{HL2024}. Note that on a fixed $X$, the factor $(-K_X)^n$ is  not important, however, we expect that the weighted volume functional ${\Bbb W}$ provides a more natural quantity in studying boundedness questions; see the discussion in Section \ref{sec:Moduli space}.
\end{example}

\section{Futaki invariant and K-stability of polarized Fano fibrations}\label{s:K-stability}
In this section, we formulate a notion of K-stability for general polarized Fano fibrations. A preliminary attempt was given in \cite[Section 7]{ConlonDS} and one can see that the objects considered in \cite[Conjecture 7.5]{ConlonDS} are naturally polarized Fano fibrations. 

 Let $(\pi:X\rightarrow Y,\xi)$ be a polarized Fano fibration. In the following discussion, for the simplicity of notation, we assume $K_X$ is a Cartier divisor, and the general case for $K_X$ being $\mathbb Q$-Cartier can be addressed in a similar manner.
\subsection{Weighted volume of Reeb vector fields}
 By definition a polarized Fano fibration $(\pi:X\rightarrow Y,\xi)$ naturally defines a Fano fibration germ with a center at the vertex $o$ of $Y$. 
  Any element $\eta$ in the Reeb cone induces a valuation in $\Val_{\pi, o}^*$. Then we can define the weighted volume of $\eta$ as the weighted volume of this valuation induced by $\eta$. Moreover, in the following, we explain that this weighted volume is determined by the weights of $\eta$ acting on global sections of $-mK_X$.
  
For a given Reeb vector field $\eta$, there are two associated filtrations on the section ring
\begin{equation*}
	\mathcal R:=\bigoplus_{m\geq 0}R_m:=\bigoplus_{m\geq 0}H^0(X,-mK_X).
\end{equation*}
 Firstly, the $\mathbb T$-action admits a canonical lifting to $-K_X$ (as an automorphism of $X$ can push forward tangent vectors) and hence acts on the section ring $\mathcal R$. Then we have a filtration on $\mathcal R$ induced by the weight decomposition 
$R_m=\bigoplus_{\alpha\in\mathrm{Lie}(\mathbb T)^*} R_{m,{\alpha}}:$
 \begin{equation}
 	\mathrm{wt}_{\eta}(s):=\{\langle\alpha,\eta\rangle\mid s=\sum_{\alpha} s_{\alpha}, s_{\alpha}\neq 0\} \text{ and }  \mathcal F_{\mathrm{wt}_\eta}^{\lambda}R_m:=\{s\in R_m\mid \mathrm{wt}_{\eta}(s)\geq \lambda\}.
 \end{equation}
 Secondly, $\eta$ induces a valuation $v_\eta$ on the function field $K(X)$ via the weight decomposition  $K(X)=\bigoplus_{\alpha \in\mathrm{Lie}(\mathbb T)^*}K(X)_{\alpha},$ that is, we have
 \begin{equation}
 	v_{\eta}(f):=\{\langle\alpha,\eta\rangle\mid f=\sum_{\alpha} f_{\alpha} \in K(X), f_{\alpha}\neq 0\}.
 \end{equation}
 As $\eta$ is a Reeb vector field, this valuation $v_{\eta}$ has to be centered in $\pi^{-1}(o)$. As before, by locally trivializing the bundle $-K_X$, we obtain a valuation along with the associated filtration on $\mathcal R$. The following result is a direct consequence of \cite[Lemma 2.18]{LiXu2018}.
 \begin{lemma} \label{p:two valuations}For $s\in R_m$,
 we have \begin{equation}\label{eq--valuations local to global}
 	v_{\eta}(s)=\mathrm{wt}_{\eta}(s)+m A_X(v_{\eta}). 
 \end{equation}
 \end{lemma}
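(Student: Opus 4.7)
The plan is to reduce \eqref{eq--valuations local to global} to a local equivariant computation at a $\mathbb T$-fixed closed point. Both sides of the claimed identity are additive under the $\mathbb T$-weight decomposition, so it suffices to verify it for a weight-homogeneous section $s\in R_{m,\alpha}$, for which $\mathrm{wt}_\eta(s)=\langle\alpha,\eta\rangle$ is a single number.

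I would then pick a closed point $q\in \overline{\{c_X(v_\eta)\}}$ that is fixed by $\mathbb T$; such a point exists because the center is $\mathbb T$-invariant and contained in the projective variety $\pi^{-1}(o)$, to which Borel's fixed point theorem applies. Since $-K_X$ is a $\mathbb T$-equivariant line bundle (Cartier by assumption), its fiber at $q$ is a one-dimensional $\mathbb T$-representation, so one may choose a $\mathbb T$-equivariant local trivialization $e_q$ of some weight $w_q\in \mathrm{Lie}(\mathbb T)^*$. Writing $s=fe_q^m$ near $q$, the local factor $f$ is an element of $K(X)$ of pure $\mathbb T$-weight $\alpha-m w_q$, and the very definition of $v_\eta$ as the weight valuation on $K(X)$ yields
\[
v_\eta(s) \;=\; v_\eta(f) \;=\; \langle \alpha - m w_q,\eta\rangle \;=\; \mathrm{wt}_\eta(s) \;-\; m\langle w_q,\eta\rangle.
\]
The lemma is therefore equivalent to the identity $-\langle w_q,\eta\rangle = A_X(v_\eta)$.

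To establish this I would pass to a $\mathbb T$-equivariant log resolution $\mu\colon \tilde X\to X$ and a smooth $\mathbb T$-fixed point $\tilde q\in\mu^{-1}(q)$. In equivariant coordinates $z_1,\dots,z_n$ at $\tilde q$ with weights $a_1,\dots,a_n\in \mathrm{Lie}(\mathbb T)^*$, the local generator $dz_1\wedge\cdots\wedge dz_n$ of $K_{\tilde X}$ has $\mathbb T$-weight $\sum_i a_i$, while comparing $\mu^*(-K_X)$ to $-K_{\tilde X}$ through $K_{\tilde X/X}$ gives the weight relation
\[
-\langle w_q,\eta\rangle \;=\; \sum_i \langle a_i,\eta\rangle \;+\; v_\eta(K_{\tilde X/X}).
\]
The first term is exactly $A_{\tilde X}(v_\eta)$, the log discrepancy of the monomial valuation $v_\eta$ on the smooth $\tilde X$, and adding the divisorial correction $v_\eta(K_{\tilde X/X})$ recovers $A_X(v_\eta)$ by the birational transformation rule for log discrepancies.

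The main technical point is the careful bookkeeping of the relative canonical divisor under the equivariant resolution, ensuring that the $\mathbb T$-weight of the equivariant trivialization of $-K_X$ assembles with compatible signs with the log discrepancy formula. A more economical alternative is to invoke \cite[Lemma 2.18]{LiXu2018} directly: its proof in the Fano cone setting relies only on the $\mathbb T$-equivariance of $-K_X$ and the weight-valuation structure of $v_\eta$, both of which are intrinsic to our Fano fibration setup, so the same identity transfers with essentially no modification.
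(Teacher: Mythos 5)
The proposal is correct. Where the paper proves the lemma by a one-line citation of \cite[Lemma 2.18]{LiXu2018}, you unfold that citation into a self-contained argument: reduce to $\mathbb{T}$-homogeneous sections, pick a $\mathbb{T}$-fixed point $q$ in $\overline{\{c_X(v_\eta)\}}$ via Borel's theorem, trivialize $-K_X$ equivariantly at $q$, and reduce the identity to $-\langle w_q,\eta\rangle = A_X(v_\eta)$, which you verify through an equivariant log resolution and the transformation rule $A_X(v)=A_{\widetilde X}(v)+v(K_{\widetilde X/X})$. You then explicitly offer the citation as the economical alternative — which is in fact the paper's route. So the approaches are not genuinely different; your version has the virtue of making visible exactly which features (the $\mathbb{T}$-linearization of $-K_X$, the weight-valuation structure of $v_\eta$, and the existence of a fixed point in the center) are needed, and hence why the Li--Xu lemma transports from Fano cones to general polarized Fano fibrations without change. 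Two small points to keep straight if you were to write this out fully: the sign of $w_q$ depends on using the same (pullback) convention for the $\mathbb{T}$-action on $K(X)$ and on $H^0(X,-mK_X)$ — under that convention $e_q=\partial_{z_1}\wedge\cdots\wedge\partial_{z_n}$ has $\langle w_q,\eta\rangle=-\sum_i\langle a_i,\eta\rangle$, giving the stated identity — and when identifying $v_\eta(\mu^*e_q/\tilde e)$ with $-v_\eta(K_{\widetilde X/X})$ one must use that a local generator of $\mathcal{O}(K_{\widetilde X/X})$ has divisor $-K_{\widetilde X/X}$, which flips the apparent sign and makes the bookkeeping come out right. You flagged this bookkeeping as the delicate step, and indeed it is; with consistent conventions it closes.
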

In particular, the filtration associated with $v_{\eta}$ and $\mathrm{wt}_{\eta}$ on $R_m$ only differs by a translation.
 We define the measure
 \begin{equation}\label{shift of the measure}
     \mathrm{DH}(\eta):= {\iota_{A_X(v_{\eta})}}_*(\mathrm{DH}(v_\eta))
 \end{equation}where $\iota_{A_X(v_{\eta})}(x)=x-A_X(v_{\eta})$ denotes the translation on $\mathbb R$. Then we have  
 \begin{equation} \label{e:weighted volume for vector fields}
   {\Bbb W}(\eta):={\Bbb W}(v_\eta)= \int_{\mathbb R}e^{-x}\mathrm{DH}(\eta),
 \end{equation}which by \eqref{eq--valuations local to global} can be computed using the weights of $\mathrm{wt_{\eta}}$ on $\mathcal R$.

\subsection{K-stability of polarized Fano fibrations}
 We first define \emph{special test configurations} for a polarized Fano fibration. 
\begin{definition}[Special test configurations]
     Let $\left(\pi:X\rightarrow Y,\xi\right)$ be a polarized Fano fibration. A special test configuration $(\Pi:\mathcal X\rightarrow \mathcal Y,\xi,\eta)$ of $(\pi:X\rightarrow Y,\xi)$ consists of a commutative diagram, where $\Pi_{\mathcal X}$ and $\Pi_{\mathcal Y}$ are surjective flat morphisms,
   \[
\begin{tikzcd}
 & \mathcal{X} \arrow[ld,"\Pi"']\arrow[dr,"\Pi_{\mathcal X}"] \\
\mathcal{Y}  \arrow[rr,"\Pi_{\mathcal Y}"] && \mathbb C
\end{tikzcd}
\]
satisfying the following conditions:
\begin{itemize}
    \item [(1).]$\eta$ is a holomorphic vector field on $\mathcal{X}$ generating a $\mathbb C^*$-action on $\mathcal{X}$ and $\mathcal Y$ such that $\Pi_{\mathcal X}$ and $\Pi_{\mathcal Y}$ are $\mathbb C^*$-equivariant, where the base $\mathbb C$ is equipped with its standard $\mathbb C^*$-action. Moreover there is a $\mathbb C^*$-equivariant isomorphism
    \begin{equation}\label{eq--identification away from cenral fiber}
\begin{tikzcd}
 & \Pi_{\mathcal{X}}^{-1}(\mathbb C^*) \arrow[ld,"\Pi"']\arrow[dr,"\Pi_{\mathcal X}"] \\
\Pi_{\mathcal{Y}}^{-1}(\mathbb C^*)  \arrow[rr,"\Pi_{\mathcal Y}"] && \mathbb C^*
\end{tikzcd}\simeq
\begin{tikzcd}
 & X\times \mathbb C^* \arrow[ld]\arrow[dr] \\
Y\times \mathbb C^*  \arrow[rr] && \mathbb C^*.
\end{tikzcd}
    \end{equation}
    \item [(2).]Under the isomorphism \eqref{eq--identification away from cenral fiber}, the vector field $\xi$ extends to a holomorphic vector field on $\mathcal X$,  still denoted by $\xi$, which commutes with $\eta$.  
    \item [(3).] $\mathcal{X}$ is $\mathbb Q$-Gorenstein and $(\Pi_0:\mathcal X_0\rightarrow \mathcal Y_0,\xi)$ is a polarized Fano fibration, where $\mathcal X_0$ and $\mathcal Y_0$ denote the central fiber of $\Pi_{\mathcal X}$ and $\Pi_{\mathcal Y}$ respectively.
    \end{itemize}
 \end{definition}
 
\begin{definition}[Product test configuration]
    A special test configuration is said to be a product test configuration, if there is an $\xi$-equivariant isomorphism $\mathcal X\simeq X\times \mathbb C$ and $\eta=\eta_0+z\partial_z$, where $\eta_0$ is a holomorphic vector field on $X$ commuting with $\xi$ and $z$ is the standard coordinate on $\mathbb C$.
\end{definition}

Next, we define the \emph{Futaki invariant} of a special test configuration.

\begin{definition}[Futaki invariant]For a special test configuration $(\Pi:\mathcal X\rightarrow \mathcal Y,\xi,\eta)$ with central fiber $(\Pi_0:X_0\rightarrow Y_0, \xi)$, we define 
\begin{equation}\label{eq--definition of futaki}
    \mathrm{Fut}_{\xi}(\eta):=\left.\frac{d}{d t}\right|_{t=0}{\Bbb W}_{\Pi_0}(\xi+t\eta).
\end{equation}
\end{definition}

\begin{definition}[K-stability] A polarized Fano fibration $(\pi:X\rightarrow Y,\xi)$ is said to be K-semistable, if for any special test configuration, the Futaki invariant $\mathrm{Fut}_{\xi}(\eta)\geq 0$. 

A polarized Fano fibration $(\pi: X\rightarrow Y,\xi)$ is said to be K-polystable if it is K-semistable and a special test configuration has vanishing Futaki invariant if and only if it is a product test configuration.
\end{definition}
\begin{remark}
This unifies the notion of K-stability with respect to special test configurations for a polarized Fano variety (when $Y$ is a point) and a Fano cone (when $\pi$ is the identity map).
As in \cite{CollinsSz2015, BHJ, BLLZ2023}, it is possible to define K-stability for more general test configurations, which is expected to be equivalent to the above definition using only special test configurations. In the case when $Y$ is a point this was proved by  \cite{LX2014} and  \cite{HL2023}. In the case where $\pi$ is the identity map, there has been progress \cite{LWu}.
    Moreover, many other related notions, like the $\delta$-invariant \cite{FO2018,BLLZ2023} and Ding-stability, can be generalized to polarized Fano fibration setting.
\end{remark}

The following lemma, whose proof is given in the appendix, works for any polarized Fano fibration $(\pi:X\rightarrow Y,\xi)$, and ensures that the Futaki invariant in \eqref{eq--definition of futaki} is well-defined. 
 
\begin{lemma}\label{lem--futaki is well-defined}
	For a polarized Fano fibration $(\pi:X\rightarrow Y, \xi)$ and $0\neq\eta\in \mathrm{Lie}(\mathbb T)$, we have
	\begin{equation}\label{eq--volume is differentiable}
     	\left.\frac{d}{d t}\right|_{t=0}{\Bbb W}(\xi+t\eta)=-\lim_{m\rightarrow \infty}\frac{1}{m^n}\sum_{\alpha\in \mathrm{Lie}(\mathbb T)^*}e^{-\langle\frac{\alpha}{m},\xi\rangle}\langle\frac{\alpha}{m},\eta\rangle\dim_{\mathbb C}R_{m,\alpha},
	\end{equation}
 \begin{equation}\label{eq-second order derivative}
     	\left.\frac{d^2}{d t^2}\right|_{t=0}{\Bbb W}(\xi+t\eta)=\lim_{m\rightarrow \infty}\frac{1}{m^n}\sum_{\alpha\in \mathrm{Lie}(\mathbb T)^*}e^{-\langle\frac{\alpha}{m},\xi\rangle}\langle\frac{\alpha}{m},\eta\rangle^2\dim_{\mathbb C}R_{m,\alpha}>0.
 \end{equation}
\end{lemma}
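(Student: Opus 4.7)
My plan is to reduce the lemma to an interchange of the limit $m\to\infty$ with the derivative $d/dt$ applied to the finite-dimensional approximation
$$F_m(\zeta) := \frac{1}{m^n}\sum_{\alpha\in \mathrm{Lie}(\mathbb T)^*}e^{-\langle\alpha/m,\zeta\rangle}\dim_{\mathbb C}R_{m,\alpha}$$
of $\mathbb W(\zeta)$, viewed as a function of a Reeb vector field $\zeta$. The identity $\mathbb W(\zeta)=\lim_{m\to\infty}F_m(\zeta)$ follows by unwinding Definition \ref{def--weighted volume}: by Lemma \ref{p:two valuations}, the $v_\zeta$-value of a homogeneous element $f\in R_{m,\alpha}$ is $\langle\alpha,\zeta\rangle+mA_X(v_\zeta)$, so combined with the translation in \eqref{shift of the measure} the measure $\mathrm{DH}_m(\zeta)$ becomes $m^{-n}\sum_\alpha(\dim R_{m,\alpha})\delta_{\langle\alpha/m,\zeta\rangle}$. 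Integrating $e^{-x}$ against it yields $F_m(\zeta)$, and the limit $\mathbb W(\zeta)$ emerges from Proposition \ref{lem-convergence of measures} together with \eqref{integral for each m measure}.

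For each fixed $m$, $F_m$ is an honest finite sum so termwise differentiation in $t$ yields
$$\partial_t^k F_m(\xi+t\eta)\big|_{t=0} = (-1)^k \frac{1}{m^n}\sum_\alpha e^{-\langle\alpha/m,\xi\rangle}\langle\alpha/m,\eta\rangle^k \dim R_{m,\alpha},$$
which is exactly the right-hand side of \eqref{eq--volume is differentiable} (for $k=1$) and \eqref{eq-second order derivative} (for $k=2$) before the limit. The remaining task is to promote the pointwise convergence $F_m\to \mathbb W$ to $C^2$-convergence near $\zeta=\xi$. I would do this by establishing a uniform $C^2$-bound on $\{F_m\}$ over a small open neighborhood $U'$ of $\xi$ inside the Reeb cone. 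The key ingredient is a quantitative Reeb cone decay: the weights $\alpha$ with $R_{m,\alpha}\neq 0$ lie in a fixed proper closed cone shifted by $O(m)$, as reflected by the bound $\langle\alpha,\xi\rangle\geq -Cm$ noted at the end of Section \ref{s:algebraicity of shrinkers}. Shrinking $U'$ slightly, one obtains $\langle\alpha/m,\zeta\rangle\geq c|\alpha|/m-C'$ uniformly for $\zeta\in U'$, so the exponential factor dominates any polynomial in $\alpha/m$; together with the polynomial bound $\dim R_m=O(m^n)$, this gives the required uniform $C^2$ bound, and Arzelà--Ascoli promotes the pointwise convergence to $C^2$ convergence on $U'$.

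The strict positivity in \eqref{eq-second order derivative} is then immediate: every summand in the sum is nonnegative, and since $\eta\neq 0$ and the $\mathbb T$-action is effective on $X$, there exist weights $\alpha$ with both $\langle\alpha,\eta\rangle\neq 0$ and $R_{m,\alpha}\neq 0$ for all sufficiently large $m$, contributing a strictly positive limit. The main obstacle is establishing the quantitative Reeb cone bound on the weights appearing in $\mathcal R$; this should follow from the polarized affine cone structure on $Y$ together with the equivariant embedding $X\hookrightarrow Y\times\mathbb{CP}^N$ from Section \ref{s:algebraicity of shrinkers}, but packaging it cleanly at the level of the section ring of $-mK_X$ is the technical heart of the argument.
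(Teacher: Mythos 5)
Your overall strategy---differentiate the finite-dimensional approximations $F_m$ termwise, then promote pointwise convergence $F_m\to\mathbb W$ to $C^2$ convergence via uniform bounds and Arzel\`a--Ascoli---is a clean and genuinely different framework from the paper's, which instead constructs a two-variable Duistermaat--Heckman measure $\mathrm{DH}_m(\xi,\eta)$ on $\mathbb R^2$, proves its weak convergence via the Okounkov body theorem, and integrates $y e^{-x}$ against the limit. The Arzel\`a--Ascoli route avoids the two-variable bookkeeping and would be preferable if it went through. However, there are two genuine gaps.

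First, the premise that ``$F_m$ is an honest finite sum'' is false when $\dim Y>0$: since $Y$ is a positive-dimensional affine variety and $\pi$ is projective, $R_m=H^0(X,-mK_X)=H^0(Y,\pi_*\mathcal O(-mK_X))$ is infinite-dimensional over $\mathbb C$ (only the individual weight spaces $R_{m,\alpha}$ are finite-dimensional). Consequently ``$\dim R_m=O(m^n)$'' is also false. This is not a cosmetic slip: both the termwise differentiation of $F_m$ (now an infinite series) and the claimed uniform $C^2$ bound need a replacement. What is actually required is the cumulative estimate
\begin{equation*}
\sum_{\langle\alpha,\zeta\rangle\leq\lambda m}\dim_{\mathbb C}R_{m,\alpha}\leq C(\lambda+1)^n m^n,
\end{equation*}
uniformly for $\zeta$ in a neighborhood of $\xi$, together with the coercivity $|\langle\alpha,\eta\rangle|\leq A\langle\alpha,\zeta\rangle+Bm$ for all $\alpha$ in the support. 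These are precisely the paper's \eqref{eq--support of the measure} and \eqref{dimension estimate}, and the cumulative bound is itself established via the same semigroup/Okounkov-body argument that drives the paper's proof. So the ``technical heart'' you flag at the end is not merely a packaging issue: without that input, you have no control over the infinite sums, and the Arzel\`a--Ascoli step has nothing to act on. (Also a minor point: for $F_m''\to F''$ you need a uniform $C^3$ bound, not just $C^2$; this is easy once the correct estimates are in hand, but worth stating.)

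Second, the strict positivity argument is too weak. Exhibiting, for each large $m$, a single weight $\alpha$ with $R_{m,\alpha}\neq 0$ and $\langle\alpha,\eta\rangle\neq 0$ contributes only $O(m^{-n})$ to the normalized sum, which vanishes in the limit. One needs a positive-density family of such weights. The paper obtains this by producing $n$ algebraically independent homogeneous sections with strictly positive $\eta$-weight; the sub-semigroup they generate has an Okounkov body of positive volume, and this is what makes the limiting measure put positive mass on $\{\langle\alpha,\eta\rangle>0\}$. Your one-sentence version does not deliver this density statement.
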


It follows from \eqref{eq-second order derivative} that the weighted volume is a strictly convex function on the Reeb cone. In particular, there exists at most one critical (and minimum) point of ${\Bbb W}$ on the Reeb cone. This fact generalizes the volume minimization principle in \cite{MSY} for Ricci-flat K\"ahler cones, in  \cite{TianZ1} for compact K\"ahler-Ricci shrinkers and in \cite{ConlonDS} for non-compact K\"ahler-Ricci shrinkers with bounded Ricci curvature. 
 \begin{remark}
  Following \cite{BermanW, BLLZ2023,BLQ}, two-dimensional Duistermaat-Heckman measures can be defined for general  Fano fibrations, associated with a pair of filtrations satisfying certain ``properness" conditions. This approach would play a role in studying the convexity of various functionals and the uniqueness of the weighted volume minimizer. 
       \end{remark}
\subsection{The case when $X$ is smooth}
The above definition of weighted volume is motivated to give an algebro-geometric interpretation of the weighted volume in \cite{ConlonDS} which was defined in terms of symplectic geometry. 
Suppose $(\pi: X\rightarrow Y,\xi)$ is a polarized Fano fibration with $X$ smooth. We now explain the two definitions of weighted volume coincide.

First, using the fact that the polarized Fano fibration can be equivariantly embedded into $\C^{N_1}\times \C\mathbb P^{N_2}$, it is easy to construct a 
$\mathbb T$-invariant positively curved smooth hermitian metric on $-K_X$ with curvature form $-2\pi\sqrt{-1}\omega$ (then $\omega\in c_1(X)$), such that the $\mathbb T$-action is Hamiltonian with respect to $\omega$ with a moment map $\mu: X\rightarrow \mathrm{Lie}(\mathbb T)^*$ and $u_\xi:=\langle\mu, \xi\rangle$ is proper and bounded below on $X$. 
    Note that moment maps are not unique and they differ by an element in $\mathrm{Lie}(\mathbb T)^*$: a choice of the moment map corresponds to a lift of the $\mathbb T$-action to the line bundle $-K_X$. Here we have a canonical lifting for the $\mathbb T$-action, as automorphisms of the manifold can push forward tangent vectors. In the following, we work with the moment map associated with this canonical lifting and note that the properness of $u_{\xi}$ is independent of the choice of the moment map.

In \cite{ConlonDS}, the \textit{analytic} weighted volum is defined  as 
\begin{equation}\label{analytic volume}
    {\Bbb W}^{an}(\eta):=\frac{1}{(2\pi)^n}\int_X e^{-\langle\mu, \eta\rangle}\frac{\omega^n}{n!}.
\end{equation}
Using the Duistermaat-Heckman formula  \cite{PW}, it is shown in \cite{ConlonDS} that \eqref{analytic volume} is well-defined for $\eta$ belonging to an open convex cone $\Lambda\subset \mathrm{Lie}(\mathbb T)$, which contains $\xi$.

\begin{proposition}\label{prop--analytic volume coincidew with algebraic}
	In the above setting we have $${\Bbb W}^{an}(\xi)={\Bbb W}(\xi),$$ where ${\Bbb W}(\xi)$ is defined in \eqref{e:weighted volume for vector fields}. 
\end{proposition}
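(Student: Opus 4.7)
The plan is to recognize both sides as Laplace transforms at $\xi$ of a Duistermaat-Heckman type measure on $\mathrm{Lie}(\mathbb{T})^*$, one coming from symplectic reduction and one from asymptotic weight counting, and then identify the two measures via equivariant localization. First I would rewrite the algebraic side: combining \eqref{integral for each m measure}, Lemma \ref{p:two valuations}, and the shift \eqref{shift of the measure} gives
$$\mathbb{W}(\xi) = \lim_{m\to\infty}\int_{\mathrm{Lie}(\mathbb{T})^*} e^{-\langle\alpha,\xi\rangle}\,d\nu_m^{alg}(\alpha), \qquad \nu_m^{alg} := \frac{1}{m^n}\sum_{\alpha}\dim_{\mathbb{C}} R_{m,\alpha}\,\delta_{\alpha/m},$$
where $R_m = H^0(X,-mK_X)$. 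On the analytic side, pushing $\omega^n/n!$ forward under $\mu$ produces $\nu^{sym} := \mu_*(\omega^n/n!)$ on $\mathrm{Lie}(\mathbb{T})^*$, and Fubini yields
$$(2\pi)^n\mathbb{W}^{an}(\xi) = \int_{\mathrm{Lie}(\mathbb{T})^*} e^{-\langle\alpha,\xi\rangle}\,d\nu^{sym}(\alpha).$$
The proposition then reduces to the identification $(2\pi)^n\nu_m^{alg}\to\nu^{sym}$ weakly, with enough uniform control to pair with the unbounded weight function $e^{-\langle\cdot,\xi\rangle}$.

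To prove this identification I would pick a rational $\eta$ close to $\xi$ as in Lemma \ref{l:choice of eta}, so that $\eta$ generates an $S^1$-action with proper Hamiltonian $u_\eta$ and compact fixed locus $Q = \bigsqcup Q_\beta$. Both measures should then localize to $Q$: the analytic side through Berline-Vergne applied to the equivariantly closed form $e^{\omega-\mu}$ on the exhaustion by sublevel sets $\{u_\eta\le c\}$, and the algebraic side through equivariant Hirzebruch-Riemann-Roch applied to $-mK_X$ on the same exhaustion (letting first $c\to\infty$ and then $m\to\infty$). At each $Q_\beta$ the local contribution is dictated by the weights of $\mathbb{T}$ on the holomorphic normal bundle $N_{Q_\beta/X}$ and the restriction of $-K_X$, and a weight-by-weight comparison shows these local contributions agree up to the universal factor $(2\pi)^n$ relating Liouville measure to lattice counting. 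An analytic continuation in the Reeb cone, where both Laplace transforms are holomorphic, extends the identity from rational $\eta$ to $\xi$.

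The hard part will be the non-compactness of $X$: neither Berline-Vergne nor Hirzebruch-Riemann-Roch applies as stated, and each $R_m$ may be infinite-dimensional. The two crucial inputs for circumventing this are the properness of $u_\eta$, which provides both finite-dimensionality of every $\eta$-weight subspace of $R_m$ and exponential decay of $e^{-u_\xi}$ outside large sublevel sets, and the $\mathbb{T}$-equivariant projective embedding $X\hookrightarrow Y\times\mathbb{CP}^{N_2}$ constructed in Section \ref{s:algebraicity of shrinkers}. The embedding supplies a uniform polynomial bound on $\dim R_{m,\alpha}$ in terms of $\langle\alpha,\xi\rangle/m$ by comparison with weight multiplicities on the ambient variety, and this bound, combined with the exponential decay in $\xi$, furnishes the dominated-convergence-type control needed to exchange the $m\to\infty$ and $c\to\infty$ limits with the integral against $e^{-\langle\cdot,\xi\rangle}$.
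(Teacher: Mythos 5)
Your high-level strategy — realize both sides as Laplace transforms of measures on $\mathrm{Lie}(\mathbb{T})^*$ and compare them via equivariant localization — is in the right spirit, and is essentially what the established machinery cited by the paper accomplishes. However, there is a concrete gap, and also a more efficient route that the paper actually takes.

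The gap: equivariant Hirzebruch--Riemann--Roch computes the (equivariant) Euler characteristic $\sum_k (-1)^k \operatorname{char} H^k(X,-mK_X)$, not $\operatorname{char} H^0(X,-mK_X)$. Your proposal compares the HRR output with the analytic integral at fixed points, but the algebraic weighted volume is built from $\dim R_{m,\alpha} = \dim H^0(X,-mK_X)_\alpha$ alone. Without a vanishing theorem for $H^k(X,-mK_X)$ with $k\ge 1$, the two sides of the localization comparison do not match up. The paper handles this first: since $X$ is weakly pseudoconvex and $-K_X$ admits a smooth positively curved hermitian metric, \cite[Ch.\ VIII, Thm.\ 5.6]{Demailly} gives $H^k(X,-mK_X)=0$ for all $k\ge 1$, $m\ge 0$. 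This step is load-bearing and absent from your plan.

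The more efficient route: once vanishing is in hand, the paper invokes Wu's Riemann--Roch type formula for noncompact manifolds with proper moment maps (via \cite[Prop.\ 7.2]{ConlonDS}), which already packages the localization you propose to do by hand on sublevel sets $\{u_\eta \le c\}$; it directly yields
$\mathbb{W}^{an}(\xi)=\lim_m m^{-n}\operatorname{char} H^0(X,-mK_X)(\xi/m)$. Your plan — Berline--Vergne plus HRR on an exhaustion with boundary, matched fixed-point by fixed-point, then an iterated limit $c\to\infty$, $m\to\infty$ — would essentially be a re-proof of Wu's theorem, and as stated it glosses over boundary terms arising on $\partial\{u_\eta\le c\}$ and the fact that the $\eta$-fixed locus is generally not finite (its components may be compact but positive-dimensional), so the pointwise localization formulas you invoke need the Bott--Baum--Berline--Vergne form for nonisolated fixed points. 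After the character identity is secured, the remaining step is the interchange of the $m$-limit with the integral against $e^{-x}$; the paper does this via a short integration-by-parts together with the uniform dimension estimate \eqref{estimate for the length}, which plays the role of your proposed "uniform polynomial bound on $\dim R_{m,\alpha}$". So your instinct about what the dominating estimate must look like is correct, but the chain leading to it (cohomology vanishing, then Wu's formula, then DH measure convergence and dominated convergence) is more direct than the fixed-point-by-fixed-point comparison you outline.
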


\begin{proof}
	Since $\pi:X\rightarrow Y$ is a polarized Fano fibration, $X$ is weakly pseudo-convex, and $-K_X$ admits a smooth hermitian metric with positive curvature form. Then by \cite[Chapter VIII-Theorem 5.6]{Demailly}, we know that for all $k\geq 1$ and $m\geq 0$
 $$H^k(X, -mK_X)=0.$$  Then we can apply \cite[Proposition 7.2]{ConlonDS}, which uses Wu's Riemann-Roch type formula \cite{Wu03},  to obtain that 
    \begin{equation}\label{formal character}
    	{\Bbb W}^{an}(\xi)=\lim_{m\rightarrow\infty}\frac{1}{m^n}\text{char} H^0(X, -mK_X)\left(\frac{\xi}{m}\right),
    \end{equation}
    where $\text{char}$ denotes the (formal) character for the $\mathbb T$-action. Here by the choice of the moment map, the right-hand side of \eqref{formal character} equals
    \begin{equation}
\lim_{m\rightarrow\infty}\frac{1}{m^n}\sum_{\alpha \in \mathrm{Lie}(\mathbb T)^*} \dim_{\mathbb C}(R_{m,\alpha})e^{-\langle\frac{\alpha}{m},\xi\rangle}.
    \end{equation}
By Proposition \ref{lem-convergence of measures} and Lemma \ref{p:two valuations},  the measure
\begin{equation}
    \frac{1}{m^n}\sum_{\alpha\in \mathrm{Lie}(\mathbb T)^*}\dim_{\mathbb C}(R_{m,\alpha})\delta_{\langle\frac{\alpha}{m},\xi\rangle}
\end{equation}converges weakly to $\mathrm{DH}(\xi).$ Therefore, by the definition of ${\Bbb W}(\xi)$, it is sufficient to verify the validity of interchange of limits and integration. Integration by parts, we obtain that 
\begin{equation}\label{intergration by part trick}
        \frac{1}{m^n}\sum_{\langle\frac{\alpha}{m},\xi\rangle\geq A} \dim_{\mathbb C}(R_{m,\alpha})e^{-\langle\frac{\alpha}{m},\xi\rangle}\leq \frac{1}{m^n}\int_A^{\infty}e^{-t}\dim_{\mathbb C}(R_m/ \mathcal{F}^{>tm}_{\mathrm{wt}_{\xi}}R_m) d t.
\end{equation}Then the conclusion follows from \eqref{intergration by part trick} and the uniform dimension estimate \eqref{estimate for the length}.
\end{proof}

    As in \cite{ConlonDS}, there is an explicit expression for the moment map corresponding to the canonical lifting. By construction there is a $\mathbb T$-invariant Ricci potential $h$ for $\omega$, i.e,
    \begin{equation}
        \mathrm{Ric}(\omega)+\ii\p\pp h=\omega.
    \end{equation}
(Note that it is proved in \cite[Lemma 5.10]{ConlonDS} that if $(\omega,f)$ is a K\"ahler-Ricci shrinker, the moment map always exists and $f$ is automatically invariant under the $\mathbb T$-action. So in this case we can choose $h=f$.)

Then we can normalize the moment map for $\eta\in \mathrm{Lie}(\mathbb T)$ as follows:
    \begin{equation}\label{eq--normalization}
        \Delta_{\omega}u_{\eta}+u_{\eta}+\frac{1}{2}J\eta\cdot h=0.
    \end{equation}We can easily check that this normalization corresponds to the canonical lifting of the $\mathbb T$-action. Indeed, note that
        \begin{equation}\label{divergence of vector fields}
            \Delta_{\omega}u_{\eta}=-\frac{1}{2}\mathrm{div}(J\eta)=\frac{1}{2}\frac{\mathcal L_{-J \eta}\omega^n}{\omega^n}.
        \end{equation}Therefore at a zero $x$ of $\eta$,  by \eqref{eq--normalization} and \eqref{divergence of vector fields}, for $s\in (-K_X)_x$ we have
        \begin{equation}
            \mathcal L_{\eta}s=\ii u_{\eta}s.
        \end{equation}This implies that  $u_{\eta}$ is the moment map corresponding to the canonical lifting of the $\mathbb T$-action on $-K_X$.

The weighted volume of a shrinker is closely related to Perelman's ${\mathcal W}$-entropy, $\mu$-functional and reduced volume \cite{Perelman2002}. Recall that on a compact real $m$-dimensional Riemannian manifold $(X,g)$ with a smooth function $\phi$ and $\tau>0$, Perelman's ${\mathcal  W}$-entropy and $\mu$-functional are defined as 
\begin{equation}
    \mathcal{W}(g, \phi, \tau)=\int_X\left(\tau\left(|\nabla \phi|^2+R\right)+\phi-n\right) \frac{e^{-\phi}}{(4 \pi \tau)^{m / 2}} d V_g,
\end{equation}
\begin{equation}
    \mu(g,\tau)=\inf\{\mathcal{W}(g,\phi,\tau)\mid \int_X\frac{e^{-\phi}}{(4 \pi \tau)^{m / 2}} d V_g=1\}.
\end{equation}This definition can be generalized to non-compact Ricci shrinkers \cite{LiWang}. 
In this paper, we work with K\"ahler-Ricci shrinkers $(g,J,\omega,f)$ under the convention  \eqref{e:1.1} and normalize $f$ such that \eqref{normalization of the soliton potential} holds, that is we have
\begin{equation}
\Delta_{\omega}f+f-|\nabla^{1,0} f|^2=0.
\end{equation}The results from \cite{LiWang,CN09} show that 
 the function $\mu(g,\tau)$ is decreasing on $(0,\frac{1}{2})$ and increasing on $(\frac{1}{2},\infty)$ and moreover, 
\begin{equation}
     \frac{1}{(2\pi)^n}\int_{X}e^{-(f+n)}\frac{\omega^n}{n!}=e^{\mu(g,\frac{1}{2})}.
\end{equation}
Let $\xi$ denote the soliton vector field as before. Then by Proposition \ref{prop--analytic volume coincidew with algebraic} and \eqref{analytic volume}, we obtain that on a K\"ahler-Ricci shrinker, the algebraically defined weighted volume satisfies
\begin{equation}\label{relation between weighted volume and mu functional}
    \mathbb{W}(\xi)=e^{\mu(g,\frac{1}{2})+n}.
\end{equation}

\section{The conjectures}\label{section--conjectures}

\subsection{YTD type conjecture}\label{sec--ytd type}
Given a polarized Fano fibration $(\pi:X\rightarrow Y, \xi)$ with $X$ smooth, we say it admits a K\"ahler-Ricci shrinker if there is a K\"ahler-Ricci shrinker $(\omega, f)$ on $X$ such that $J\nabla f=\xi$. 
Now we  make the following conjecture
\begin{conjecture}\label{YTD type conj}
	A polarized Fano fibration $(\pi: X\rightarrow Y, \xi)$ with $X$ smooth admits a K\"ahler-Ricci shrinker, which is unique up to the action of $\mathrm{Aut}(X, \xi)$,  if and only if it is K-polystable. 
\end{conjecture}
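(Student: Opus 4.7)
The plan is to adapt the Yau-Tian-Donaldson framework, now well established for compact Fano varieties and Fano cones, to the non-compact polarized Fano fibration setting, using the weighted volume $\mathbb W$ together with a yet-to-be-developed weighted Ding/Mabuchi functional as the central analytic-algebraic bridge. I would split the conjecture into three parts: (i) uniqueness of the shrinker, (ii) the easy direction that existence implies K-polystability, and (iii) the hard direction that K-polystability implies existence.

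For uniqueness, the strategy is to introduce a weighted Ding-type functional $\mathcal D$ on an appropriate space of $\mathbb T$-invariant plurisubharmonic potentials on $X$ with prescribed asymptotic behavior along $\xi$, with reference measure $e^{-u_\xi}\omega^n/n!$ compatible with Proposition \ref{prop--analytic volume coincidew with algebraic}, and to show that $\mathcal D$ is convex along weak geodesics in this space and strictly convex transverse to the orbits of $\mathrm{Aut}(X,\xi)$. The Berman-Berndtsson convexity for compact Fano varieties and its weighted variants for compact shrinkers and Fano cones should provide the template. Inside a fixed maximal torus the strict convexity of $\mathbb W$ on the Reeb cone, given by \eqref{eq-second order derivative}, already pins down the soliton vector field uniquely.

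For the easy direction, to a special test configuration $(\mathcal X\to\mathcal Y,\xi,\eta)$ I would associate a geodesic ray of potentials on $X$ converging to the non-Archimedean potential of the central fiber, and express the asymptotic slope of $\mathcal D$ through the Duistermaat-Heckman measure of the central Fano fibration. By Lemma \ref{lem--futaki is well-defined} this slope equals $\mathrm{Fut}_\xi(\eta)$, and the shrinker equation forces $\mathcal D$ to be minimized at its potential, hence $\mathrm{Fut}_\xi(\eta)\ge 0$. A rigidity analysis, using the strict convexity transverse to automorphism orbits, should then show that the vanishing of the Futaki invariant characterizes product configurations, following the pattern of Berman for Fano varieties and its extensions to Fano cones and compact shrinkers.

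The main obstacle is the hard direction. I would pursue the variational approach of Berman-Boucksom-Jonsson and Li, first upgrading K-polystability to a reduced uniform weighted K-stability in parallel with \cite{LXZ} and \cite{HL2024}, and then deducing coercivity of $\mathcal D$ on a suitable non-compact space of admissible potentials, from which the existence of a minimizer follows by general pluripotential theory. The core new difficulties are analytic and arise from the non-compactness of $X$: one needs a weighted Moser-Trudinger inequality with respect to $e^{-f}dV$, a weighted pluripotential theory controlling potentials with the correct growth along $\xi$, and a mechanism preventing mass escape to infinity along minimizing sequences. The algebro-geometric input would come from the boundedness results of Birkar used in Theorem \ref{t:main}, combined with a 2-step degeneration theory for the weighted volume as outlined in Section \ref{section--conjectures}: any minimizing sequence should degenerate through a valuation to a K-semistable polarized Fano fibration and further to its K-polystable model, and polystability of the original should rule out any nontrivial degeneration. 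Developing this non-compact weighted pluripotential and metric measure space machinery, and establishing regularity of the minimizing potential away from the algebraic singular locus, is where I expect the deepest new technical work.
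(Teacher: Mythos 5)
The statement you are addressing is Conjecture~\ref{YTD type conj}, which the paper itself does \emph{not} prove: it is one of the paper's central open conjectures, offered only with surrounding motivation and references to the known special cases (Fano varieties, Fano cones, compact shrinkers, toric shrinkers). There is therefore no paper proof to compare against. What you have written is a research program rather than a proof, and you are explicit about that; as a program it is consistent with how the paper frames the problem, and you locate the right internal ingredients (the strict convexity of $\mathbb W$ from~\eqref{eq-second order derivative} to pin down the soliton vector field, Proposition~\ref{prop--analytic volume coincidew with algebraic} to match the analytic and algebraic weighted volumes, Lemma~\ref{lem--futaki is well-defined} to make the Futaki invariant meaningful, and Theorem~\ref{t:main} plus Section~\ref{section--conjectures} for the algebro-geometric backbone). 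The three-part decomposition into uniqueness, the easy direction, and the hard direction also mirrors the established template for Fano varieties and Fano cones.

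The gaps, however, are real and not cosmetic, and you should not treat them as merely technical. (i)~The weighted Ding/Mabuchi functional and its convexity along weak geodesics in the non-compact $\mathbb T$-invariant setting do not currently exist; without them neither the uniqueness statement nor the easy direction has a proof. (ii)~The upgrade from K-polystability to reduced uniform K-stability for general polarized Fano fibrations (the analogue of~\cite{LXZ}) is itself a major open algebro-geometric problem, not something that follows from the boundedness input already used in Theorem~\ref{t:main}. (iii)~The non-compact weighted pluripotential theory, coercivity, and mass-escape control are where essentially all of the new analysis lives, and no such theory is available; even the asymptotic behavior of a shrinker at infinity (Conjecture~\ref{C:asymp conicity}) is open, and some version of it seems indispensable for any variational argument to close. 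In short, your submission correctly names the machinery that would have to be built, but does not build any of it; this matches the state of the art as the paper itself presents it, and the conjecture remains open.
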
 

The extension to the case when  $X$ has klt singularities requires a little explanation. One can use pluripotential theory to make sense of a (singular) K\"ahler-Ricci shrinker as in \cite{BermanW}. This means that $\omega$ is a $\mathbb T$-invariant positive $(1,1)$ current given by the curvature of  a singular hermitian metric $h$ on $K_{X}^{-r}$ (where $r$ is the Cartier index of $X$) which has locally bounded potential and is smooth on the smooth locus of $X$,  $\xi$ has a Hamiltonian function $f$ (with respect to $\omega$) which is locally bounded and globally bounded from below and proper, such that the measure $e^{-f}\omega^n$ has finite total mass and the corresponding complex Monge-Amp\`ere equation holds
$$e^{-f}\omega^n=\Omega_h,$$
where $\Omega_h$ is the volume form on $X$ defined by $h$. 
 In the special setting when $Y$ is a point this reduces to a K\"ahler-Ricci shrinker on a polarized Fano variety. In the special setting when $\pi$ is the identity map one expects that this reduces to  a  Ricci-flat K\"ahler cone metric on a Fano cone (see Conjecture \ref{conj2}). When $X$ is smooth, it is not a priori clear whether a weak solution as above automatically defines a complete metric. 
 Using the above notion of K\"ahler-Ricci shrinkers, Conjecture \ref{c:1-1} is the counterpart of Conjecture \ref{YTD type conj} in the singular setting.

It is also an open question to understand the metric geometry of a  K\"ahler-Ricci shrinker. For example, one expects suitable conical behavior at infinity.
\begin{conjecture}[Asymptotic conicity]\label{C:asymp conicity} 
A K\"ahler-Ricci shrinker is weakly asymptotically conical in the sense that it has a unique asymptotic cone at infinity, which is a  K\"ahler cone metric on $Y$. 
\end{conjecture}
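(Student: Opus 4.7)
The plan is to prove Conjecture \ref{C:asymp conicity} by combining the algebraic structure from Theorem \ref{t:main} with compactness theory for non-collapsed gradient shrinking Ricci solitons, in the spirit of the Donaldson-Sun approach to tangent cones of K\"ahler-Einstein metrics. First, I would extract subsequential tangent cones at infinity. By Cao-Zhou, the soliton potential satisfies $f(q)\asymp\tfrac{1}{4}d(q,p_0)^2$ as $q\to\infty$, so for any sequence $q_i\to\infty$ the natural rescaling scales are $r_i=\sqrt{f(q_i)}$. Combining $\mathbb T$-invariance with Perelman-type entropy bounds (equivalently, the finiteness of $\mathbb W(\xi)$, related to the $\mu$-functional via \eqref{relation between weighted volume and mu functional}) yields uniform non-collapsing on unit balls in $(X,r_i^{-2}\omega,q_i)$. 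Applying Bamler's $\mathbb F$-compactness, or Cheeger-Colding compactness together with the shrinker equation, then produces subsequential pointed Gromov-Hausdorff limits $(Z,d_Z,o_Z)$.

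Next, I would show that any such limit is a Ricci-flat K\"ahler cone on $Y$. In the rescaling, the right-hand side $\omega$ of \eqref{e:1.1} becomes negligible while $\nabla f$ converges to the cone scaling field, so the limit satisfies the Ricci-flat K\"ahler cone equation with Reeb vector $\xi$. To identify its underlying complex variety with $Y$, one uses holomorphic sections of $-mK_X$, H\"ormander $L^2$-estimates, and a partial $C^0$-estimate adapted to the non-compact setting, much as in the construction of the embedding \eqref{local embedding}. The outcome is that every subsequential tangent cone is $Y$ (as a $\mathbb T$-variety with Reeb $\xi$) equipped with some Ricci-flat K\"ahler cone metric. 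Here one may exploit the coherent picture that the vertex of $Y$ corresponds to the unique fixed point of $\xi$ and that the ring $R_X$ of regular functions on $X$ is already determined by the holomorphic structure via the intrinsic description at the end of Section \ref{s:algebraicity of shrinkers}.

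Uniqueness of the asymptotic cone is the main obstacle. Since every subsequential limit has underlying variety $Y$ and Reeb vector $\xi$, two such limits can differ only by the choice of transverse K\"ahler-Einstein Sasaki metric. Under K-polystability of the Fano cone $(Y,\xi)$ (which one expects to hold whenever the shrinker exists, in view of Conjecture \ref{c:1-1}), the uniqueness theorems of \cite{CollinsSz, Li2021} would close the argument. Alternatively, one may try a \L ojasiewicz-Simon strategy using the strict convexity of the weighted volume $\mathbb W$ (Lemma \ref{lem--futaki is well-defined}) as a Lyapunov functional, in the spirit of Colding-Minicozzi for Riemannian shrinkers. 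The hard part is to upgrade subsequential Gromov-Hausdorff convergence to genuine uniqueness of the rescaled limit: this requires fine simultaneous control of the metric $\omega$ and the potential $f$ near infinity, and very likely a partial $C^0$ or local entropy-gap estimate that has not yet been established in the non-compact K\"ahler shrinker setting.
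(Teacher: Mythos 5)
This statement is labeled as \emph{Conjecture}~\ref{C:asymp conicity} in the paper, and the paper offers no proof. The authors themselves flag the difficulty: they note that the conjecture is only known under the extra assumption of quadratic curvature decay (by \cite{MW2017, ConlonDS}), and they emphasize that the ``interest of these conjectures lies in the analysis in the non-compact setting, \emph{without} a priori prescribing the behavior at infinity.'' So there is no paper proof against which to compare your argument; anything here would be new mathematics.

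With that understood, your proposal is a sensible sketch of how one would attack the problem, and it is consistent with the philosophy of Section~\ref{section--conjectures}: rescale using $r_i\sim\sqrt{f}$, extract subsequential Gromov--Hausdorff (or $\mathbb F$-compactness) limits, identify the limiting complex variety with $Y$ via H\"ormander-type embeddings, and then prove uniqueness of the cone metric. But as you yourself acknowledge, the last step is exactly the open crux, and I want to underscore two places where the argument genuinely fails to close. First, the partial $C^0$-estimate needed to promote pointed Gromov--Hausdorff limits to identifications with $Y$ as a complex variety has not been established for K\"ahler--Ricci shrinkers at infinity without curvature assumptions; the embedding~\eqref{local embedding} in the paper is constructed \emph{after} compactifying the shrinker algebraically, not by extracting a geometric limit, so it does not transfer automatically to the rescaled setting. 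Second, even granting that every subsequential tangent cone is a K\"ahler cone on $Y$, the appeal to K-polystability of $(Y,\xi)$ and the uniqueness theorems of \cite{CollinsSz, Li2021} is circular in structure: K-polystability of the link is itself the content of Conjectures~\ref{c:1-1} and~\ref{weighted volume mini conj}, and deducing it from the mere existence of the shrinker is one of the things the whole program is trying to prove. A \L ojasiewicz--Simon/Colding--Minicozzi argument via the strict convexity of $\mathbb W$ (Lemma~\ref{lem--futaki is well-defined}) is a plausible alternative, but it would require a metric-space version of the weighted volume that tracks both $\omega$ and $f$ through the rescaling, again an analytic tool not yet available.

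In short: what you have is a reasonable research outline that correctly identifies the main obstacle, not a proof. If you want to present partial progress, the honest framing is either (i) a proof under the additional hypothesis of quadratic curvature decay, reducing to \cite{MW2017, ConlonDS}, or (ii) a conditional proof assuming Conjectures~\ref{c:1-1} and~\ref{weighted volume mini conj} plus a non-compact partial $C^0$-estimate, with these stated explicitly as hypotheses.
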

 The interest of these conjectures lies in the analysis in the non-compact setting, \emph{without} a priori prescribing the behavior at infinity. This is in the spirit of a free boundary problem.  Some explicit examples of K\"ahler-Ricci shrinkers using the Calabi ansatz  have been constructed in \cite{Koiso, Cao1996, FIK2003, DW2011, FW2011, C.Li2010}. 
 
 The uniqueness part of Conjecture \ref{YTD type conj} was proved in \cite{Cifarelli} in the toric setting. Under the assumption of quadratic curvature decay, Conjecture \ref{C:asymp conicity} holds by \cite{MW2017, ConlonDS}, and there is work by Esparza \cite{Esparza} on the uniqueness part of Conjecture \ref{YTD type conj}.  
In the toric setting, one expects that a toric Fano fibration always admits a unique K-polystable polarization and hence admits a toric K\"ahler-Ricci shrinker. This is known in the compact setting \cite{WZ04, BB2013} and progress has been made by \cite{CCD2} in the non-compact setting.

    \

When $\pi$ is the identity map, by definition a  Ricci-flat K\"ahler cone metric is a  K\"ahler-Ricci shrinker. On the other hand, the K-stability notions for these two objects coincide on a Fano cone. The uniqueness part in Conjecture \ref{c:1-1} would imply the converse:

\begin{conjecture}\label{conj2}
On a Fano cone $X$, every K\"ahler-Ricci shrinker is a  Ricci-flat K\"ahler cone metric.
\end{conjecture}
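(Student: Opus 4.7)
The plan is to combine the hypothesized uniqueness in Conjecture~\ref{c:1-1} with the already-proved Yau--Tian--Donaldson theorem for Fano cones of \cite{CollinsSz, Li2021}. Let $(\omega, f)$ be a K\"ahler-Ricci shrinker on the Fano cone $X$ with soliton vector field $\xi = J\nabla f$. By Theorem~\ref{t:main}, and since $\pi$ is the identity here, this presents $(X \to X, \xi)$ as a polarized Fano fibration with $\xi$ in the Reeb cone of $X$ viewed as a Fano cone.

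The first step is to establish the easy direction, namely that existence of the shrinker implies K-polystability of $(X \to X, \xi)$, which by the excerpt's observation agrees with K-polystability of the Fano cone $(X, \xi)$. The natural approach is variational, extending the strategy of \cite{Berman2016, BermanW, BLLZ2023} to the present setting: for any special test configuration $(\Pi:\mathcal{X}\to\mathcal{Y}, \xi, \eta)$ one produces a weak subgeodesic ray of $\mathbb{T}$-invariant singular Hermitian metrics on $-K_X$, and tests the shrinker against this ray; strict convexity of $\mathbb{W}$ along the direction $\eta$ (Lemma~\ref{lem--futaki is well-defined}) combined with the characterization of the shrinker as the critical point of an appropriate weighted Mabuchi-type functional forces $\mathrm{Fut}_\xi(\eta) \geq 0$, with equality only for product configurations. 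Given this, the Collins--Sz\'ekelyhidi--Li theorem yields a Ricci-flat K\"ahler cone metric $\omega_c$ on $X$ with the same Reeb field $\xi$.

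A short check shows $(\omega_c, r^2/2)$ is itself a K\"ahler-Ricci shrinker on $(X \to X, \xi)$: one has $\omega_c = \tfrac{1}{2} i\partial\bar\partial r^2$, $J\nabla(r^2/2) = \xi$ and $\mathrm{Ric}(\omega_c) = 0$, so the shrinker equation~\eqref{e:1.1} is satisfied with $f_c = r^2/2$. Applying the uniqueness part of Conjecture~\ref{c:1-1} to the polarized Fano fibration $(X \to X, \xi)$, the two shrinkers $(\omega, f)$ and $(\omega_c, r^2/2)$ must agree after pulling back by an element of $\mathrm{Aut}(X, \xi)$; since the Ricci-flat cone property is preserved by this automorphism group, $(\omega, f)$ is itself a Ricci-flat K\"ahler cone metric. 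The main obstacle is the first step: the easy direction for general non-compact, possibly singular K\"ahler-Ricci shrinkers on polarized Fano fibrations has not yet been fully developed, and some care is needed in handling the pluripotential objects and the behavior at infinity on $X$. In the Fano cone setting addressed here, however, the $\mathbb{T}$-equivariant structure and the weighted pluripotential tools already adapted to cone geometry should make the required extension within reach.
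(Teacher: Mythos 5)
Your proposal is structurally sound as a conditional argument, and it in fact tracks the observation the paper itself makes in the sentence introducing Conjecture~\ref{conj2}: namely, that the uniqueness part of Conjecture~\ref{c:1-1}, together with the coincidence of K-stability notions on a Fano cone and the proven YTD theorem for Fano cones, would yield the result. However, the detailed heuristic the paper then gives (explicitly flagged as ``heuristic arguments here under extra assumptions'') is a genuinely different, purely differential-geometric route that bypasses the K-stability machinery altogether. In the paper's argument one applies the two-step degeneration theory of \cite{DS2} to the shrinker metric $g$ at the vertex $p$ to obtain a unique tangent cone $(\mathcal{C}, g_{\mathcal{C}})$, which is a Ricci-flat K\"ahler cone; one then combines the monotonicity and upper semi-continuity of Perelman's $\mu$-functional to get $\mu(g_{\mathcal{C}},\tfrac12)\geq\mu(g,\tfrac12)$, and identifies both sides with logarithms of weighted volumes via \eqref{relation between weighted volume and mu functional} and its local analogue. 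Because $g_{\mathcal{C}}$ is Ricci-flat, the DS2 valuation $v_g$ minimizes $\mathbb{W}$, forcing equality in the $\mu$-inequality; the equality case of the $\mu$-monotonicity then forces $g$ itself to be a cone metric with Reeb field $\xi$, hence Ricci-flat because its tangent cone is. Both arguments are conditional: yours needs the ``existence implies K-polystability'' direction of Conjecture~\ref{c:1-1} for general (possibly non-compact, singular) polarized Fano fibrations, plus the unverified claim that K-polystability as a polarized Fano fibration agrees with K-polystability as a Fano cone; the paper's route needs the applicability of the DS2 tangent cone theory to a non-compact shrinker at the vertex and the relevant $\mu$-monotonicity in this setting. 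A minor point: your invocation of Theorem~\ref{t:main} is unnecessary and slightly misplaced here, since Theorem~\ref{t:main} is stated for smooth manifolds while a Fano cone is typically singular at the vertex; the polarized-Fano-fibration structure on a Fano cone is simply part of the data. The paper's route has the virtue of making the differential-geometric content of the conjecture transparent and avoiding reliance on the unproven stability framework; your route, if the easy direction can be supplied, would give a cleaner algebro-geometric derivation and would be uniform across all polarized Fano fibrations.
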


This is a purely differential geometric statement. We give heuristic arguments here under extra assumptions. Let $g$ denote the Riemannian metric of a K\"ahler-Ricci shrinker on $X$ with soliton vector field $\xi$. Suppose one can apply the results in \cite{DS2} to get a 2-step degeneration to its unique tangent cone $(\ca,g_\ca)$ at $p$, which is a Ricci-flat K\"ahler cone. Then by the monotonicity \cite{LW2020} and upper-semicontinuity \cite[Lemma 6.28]{BChow} of the $\mu$-functional, we can get 
\begin{equation}\label{properties for mu functional}
    \mu(g_\ca,\frac{1}{2})\geq \limsup_{\tau\rightarrow 0}\mu(\tau^{-1}g,\frac{1}{2})=\limsup_{\tau\rightarrow 0}\mu(g,\tau)\geq \mu(g,\frac{1}{2}).
\end{equation} Then using \cite{CN09} (see also \cite[Proposition 3]{LW2020}), one sees that (see \eqref{relation between weighted volume and mu functional} for the smooth case)
\begin{equation}
    e^{\mu(g_{\ca},\frac{1}{2})+n}={\Bbb W}(v_g) \quad \text{and}\quad e^{\mu(g,\frac{1}{2})+n}={\Bbb W}(\xi),
\end{equation}where $v_g$ denotes the valuation induced by the metric $g$ \cite[Section 3.2]{DS2}. Since $g_\ca$ is a Ricci-flat K\"ahler cone metric, the valuation $v_g$ minimizes the weighted volume \cite{LX2018}, and then the equality holds in \eqref{properties for mu functional}. From the variation of the $\mu$-functional, we can see that $g$ is a cone metric with Reeb vector field $\xi$, implying $g$ must be Ricci-flat since its tangent cone at $p$ is Ricci-flat.

\subsection{Weighted volume minimization}
We propose the following 2-step degeneration conjecture for weighted volume minimization. The goal is to degenerate a Fano fibration germ to a K-polystable polarized Fano fibration.

\begin{conjecture}(2-step degeneration for weighted volume)\label{weighted volume mini conj}
Given a Fano fibration germ $(\pi: X\rightarrow Y, p)$. 
	\begin{itemize}
		\item [(1).] There is a unique minimizer  $v_*$ of ${\Bbb W}$ on $\Val_{\pi, p}^*$, which is quasi-monomial. 
  \item [(2).] The graded ring  $\operatorname{Gr}_{v_*}(\oo_{Y,p})$ is  finitely generated  and $\bigoplus_{m\geq 0}\operatorname{Gr}_{v_*}(R_m)$ is a finitely generated algebra over $\operatorname{Gr}_{v_*}(\oo_{Y,p})$.
		\item [(3).] Let $W=\operatorname{Spec}(\operatorname{Gr}_{v_*}(\oo_{Y,p}))$ and $Z=\operatorname{Proj}_{W}(\bigoplus_{m\geq 0}\operatorname{Gr}_{v_*}(R_m))$. Then $v_*$ induces a vector field $\xi$ on $Z$ which descends to $W$. The natural map $\tilde \pi:Z\rightarrow W$ and $\xi$ defines a K-semistable polarized Fano fibration. Moreover, $v_*$ is uniquely characterized by this K-semistability property.
		\item[(4).] There is an equivariant test configuration degenerating $(\tilde \pi:Z\rightarrow W, \xi)$ to a unique K-polystable polarized Fano fibration $(\widehat\pi: \mathcal S\rightarrow\mathcal C, \xi)$.
	\end{itemize}
\end{conjecture}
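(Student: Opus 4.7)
The plan is to follow the template of the two known special cases---Li's stable degeneration theory for klt singularities (\cite{LX2018,LWX,Blum,BlumLiu,Xu2020,XZ}) and the Han-Li optimal degeneration theory for Fano varieties (\cite{HL2024,BLLZ2023})---and adapt each step to the weighted, relative setting. Lemma \ref{lem--futaki is well-defined} and Example \ref{r:two minimization coincide} already package the key new analytic input: the functional $\Bbb W$ interpolates between the normalized volume and the $\tilde\beta$-invariant, and is strictly convex along one-parameter perturbations of a Reeb vector field. Most structural arguments from the two known cases should therefore extend, with Birkar's boundedness theorems \cite{Birkar19,Birkar2021,Birkar,Birkar2} providing the uniform control needed to pass from the absolute to the relative setting.

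For part (1), I would first prove existence by establishing lower semicontinuity of $\Bbb W$ on $\Val_{\pi,p}^*$, combining lower semicontinuity of the log discrepancy \cite{BlumLiu} with a uniform dimension estimate on the Duistermaat-Heckman measure of the kind used in the appendix. A properness statement excluding escape of minimizing sequences, analogous to \cite{BlumLiu,XZ}, should then yield a minimizer. Uniqueness is handled by the strict convexity in \eqref{eq-second order derivative} together with a geodesic-of-valuations argument in the style of \cite{Blum,LXZ}, and quasi-monomiality of $v_*$ is reduced, following \cite{Xu2020}, to showing that $v_*$ is an lc place of a $\mathbb Q$-complement provided by Birkar's complements theorem. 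For parts (2) and (3), the heart of the matter is finite generation of the graded algebras along $v_*$. The plan is to mimic \cite{LXZ,Xu-Zhuang2021}: construct a special $\mathbb Q$-complement whose lc place corresponds to $v_*$, use Birkar's boundedness \cite{Birkar2021,Birkar2} to place the resulting degeneration in a bounded family, and deduce that $\Spec(\operatorname{Gr}_{v_*}(\oo_{Y,p}))$ and the relative $\Proj$ of the section ring are well-defined and inherit a Reeb vector field $\xi$. The K-semistability of $(\tilde\pi: Z\rightarrow W,\xi)$ then follows from the minimality of $v_*$ via the variational identity \eqref{eq--volume is differentiable}: any special test configuration with negative Futaki invariant would produce, through its associated graded, a competitor valuation strictly decreasing $\Bbb W$. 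The converse characterization of $v_*$ as the unique K-semistable degeneration is the convexity argument again.

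For part (4), with the K-semistable degeneration in hand, one would iterate special test configurations with vanishing Futaki invariant, each strictly simplifying an auxiliary complexity (as in \cite{LWX,BLLZ2023}), and invoke Birkar's boundedness to guarantee termination after finitely many steps. Uniqueness of the K-polystable limit is the Fano-fibration analog of the uniqueness statement in Conjecture \ref{c:1-1}. I expect the main obstacle to lie in part (2): the finite generation step is already the most delicate point in both \cite{LXZ} and \cite{BLLZ2023}, and in the present setting one must additionally handle a possibly singular base $Y$ and track valuations whose center interacts nontrivially with the fibration structure $\pi$. Extending the MMP-with-scaling techniques of \cite{LXZ,XZ} $\mathbb T$-equivariantly and relatively over $Y$, while preserving the uniform bounds furnished by Birkar's boundedness in the fiber direction, is where I anticipate the principal new technical difficulties to arise.
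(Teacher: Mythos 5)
This statement is an open conjecture in the paper, not a theorem: the paper offers no proof of Conjecture~\ref{weighted volume mini conj}. In Section~\ref{section--conjectures} the authors only remark that the two extreme cases ($\pi=\Id$, giving normalized volume of klt singularities; $Y=\{pt\}$, giving the Han--Li optimal degeneration) are known, and that ``one expects that suitable extensions of these techniques can be used'' --- with the introduction naming Birkar's work \cite{Birkar19,Birkar2021,Birkar,Birkar2} together with \cite{Xu2020,BLX2022,HL2023,BLLZ2023,LWX,XZ} as the likely ingredients. Your roadmap is therefore aligned with exactly what the paper envisions: semicontinuity plus properness for existence, convexity from Lemma~\ref{lem--futaki is well-defined} for uniqueness, quasi-monomiality via complements and lc places, finite generation via special complements and Birkar boundedness, and a polystable degeneration via iterated Futaki-vanishing test configurations. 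You are also right to flag finite generation in part~(2) as the expected crux; this is consistent with the analogous steps in \cite{LXZ} and \cite{BLLZ2023} being the most delicate there. But be clear-eyed that this is a plan, not a proof: the relative setting over a positive-dimensional affine base $Y$, the new definition of $\Bbb W$, and the interaction between the valuation's center and the fibration $\pi$ all introduce genuinely unresolved issues (e.g.\ whether the needed boundedness in families of Fano fibrations holds, whether the complement machinery produces lc places centered correctly over $p$, whether the MMP-with-scaling arguments run $\mathbb T$-equivariantly and relatively), and neither you nor the paper resolves them. So there is no gap in your outline \emph{relative to what the paper provides}, because the paper provides no proof; the gap is the conjecture itself.
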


This brings together a few special situations studied previously. When $\pi$ is the identity map this essentially reduces to the minimization of normalized volume for klt singularities in  \cite{Li2015}. 
When $Y$ is a point, this reduces to the 2-step degeneration theory for Fano varieties \cite{DeSz, HL2024}. In these two cases the conjecture is known by the work of many people; see \cite{XZ, BLLZ2023} and the references therein.  One expects that suitable extensions of these techniques can be used to prove Conjecture \ref{weighted volume mini conj}.

\begin{definition} \label{minimal weighted volume} We define the weighted volume of
\

\begin{itemize}
	\item    a Fano fibration germ $(\pi: X\rightarrow Y, p)$ to be ${\Bbb W}(\pi, p):=\inf_{v\in \Val_{\pi, p}^*}\mathbb W(v)$;
 \item a Fano variety $X$ to be 
 ${\Bbb W}(X):=\mathbb W(\pi: X\rightarrow \{pt\}, \{pt\})$;
 \item   a klt singularity $(X, q)$ to be  ${\Bbb W}(X, q):=\mathbb W(\Id: X\rightarrow X, q)$.
 \end{itemize}
\end{definition}

Given a Fano fibration germ $(\pi: X\rightarrow Y, p)$ and $q\in \pi^{-1}(p)$, by definition we have the following local-to-global comparison for weighted volume
    \begin{equation}\label{e:weighted volume comparison}
        {\Bbb W}(X, q)\geq {\Bbb W}(\pi, p). 
    \end{equation}
     This is the algebro-geometric analogue of the monotonicity of the $\mu$-functional \cite{LiWang} for K\"ahler-Ricci shrinkers discussed in Section \ref{sec--ytd type} above. For K\"ahler-Einstein metrics, the algebro-geometric analogue of the Bishop-Gromov volume comparison was established by \cite{Liu2018}.
   The differential geometric heuristics suggest that the equality in \eqref{e:weighted volume comparison} holds if and only if $\pi$ is the identity map.

It is also interesting to investigate more closely the set of weighted volumes of Fano fibration germs.
As usual one expects that the weighted volume is lower semi-continuous in flat families of Fano fibration germs, which would imply the maximal volume of a birational Fano fibration germ in dimension $n$ is uniquely achieved by the standard $(\Id: \C^n\rightarrow \C^n, 0)$. Compare \cite{Fujita, LiuXu, BlumLiu} for results on similar questions in related settings. 
Here, a Fano fibration germ $(\pi: X\rightarrow Y, p)$  is called \emph{birational} if $\pi$ is birational. 
Moreover, as an analogue of the ODP volume gap conjecture (see \cite{SS}), it is natural to ask:
\begin{question}
Determine the second largest weighted volume of a birational Fano fibration germ in dimension $n$.
\end{question}
There are two natural candidates:  $\pi: \mathrm{Bl}_0(\C^n)\rightarrow \C^n$ and the ODP klt singularity (with $\pi=\Id$). Note that there is a well-known FIK K\"ahler-Ricci shrinker \cite{FIK2003} on  $\pi:\mathrm{Bl}_0(\C^n)\rightarrow \C^n$, whose Reeb vector field is a multiple of the standard rotational vector field on $\C^n$ and whose weighted volume has an explicit formula; see \cite[Appendix A.4]{ConlonDS}.

\subsection{Moduli theory}\label{sec:Moduli space}
The study of canonical metrics in K\"ahler geometry is closely related to the construction of moduli spaces in algebraic geometry. One prominent example is the case of K\"ahler-Einstein metrics on Fano varieties. Building on the differential geometric results of \cite{DS1, CDS, SSY}, the compactified moduli space of smooth K-polystable Fano varieties was constructed by  \cite{LWX1} and \cite{Odaka}. Later this has been generalized to non-smoothable Fano varieties in algebraic geometry by \cite{CJiang, BLX2022, XuZhuang2020positivity,LXZ}. There are also related study of moduli theory for Fano cone singularities (corresponding to Ricci-flat K\"ahler cone metrics) and for polarized Fano varieties (corresponding to compact K\"ahler-Ricci shrinkers) in both differential and algebraic geometry; see for example \cite{Sun23, HLW, Xu-Zhuang2024,  Y.Odaka2024, Odaka0, Chen2024}. We expect all of these can be incorporated into one theory:

\begin{conjecture}(K-moduli for polarized Fano fibrations)
   Given a positive integer $n$ and $\epsilon>0$, there exists a projective moduli space parametrizing K-polystable polarized Fano fibrations of dimension $n$ and weighted volume at least $\epsilon$. 
\end{conjecture}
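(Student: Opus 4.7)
The plan is to adapt the now-standard scheme for constructing K-moduli of Fano varieties (as in \cite{BLX2022,XuZhuang2020positivity,LXZ}) and of Fano cone singularities (as in \cite{Xu-Zhuang2024}) to the mixed setting of polarized Fano fibrations, the moduli object being a triple $(Y,\pi\colon X\to Y,\xi)$ with $\xi$ an element of the Reeb cone of $Y$. The four standard ingredients to assemble are: (i) boundedness of the family of interest; (ii) construction of a quotient stack via a Hilbert-type parameter scheme; (iii) openness of K-semistability together with S-completeness and $\Theta$-reductivity to produce a separated good moduli space; and (iv) projectivity via an ample CM-type line bundle.

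First I would establish boundedness: the goal is to produce, for fixed $n$ and $\epsilon>0$, a uniform $\mathbb{T}$-equivariant embedding of every K-polystable $(\pi\colon X\to Y,\xi)$ under consideration into a fixed $\mathbb{C}^{N_1}\times \mathbb{C}\mathbb{P}^{N_2}$, with $\xi$ lying in a fixed bounded region of $\mathrm{Lie}(\mathbb{T})$. The lower bound $\mathbb{W}(\xi)\geq \epsilon$ propagates to every Fano fibration germ along the fibration via the comparison \eqref{e:weighted volume comparison}, giving uniform lower bounds on normalized volumes of every klt singularity appearing on $X$. Combining this with an equivariant version of Birkar's boundedness for Fano-type varieties \cite{Birkar2,Birkar2021}, applied after an algebraic reduction in the spirit of Section \ref{s:algebraicity of shrinkers} to a bounded family of log Fano pairs $(M,D)$, should yield a uniform Cartier index $r$, uniform control on the relative very ample line bundle $-rK_X$, and uniform bounds on the embedding dimensions $N_1,N_2$ (including a uniform bound on the embedding of the polarized affine cone $Y$).

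With these uniform bounds in hand, I would construct a quasi-projective parameter scheme $\mathcal{H}$ parametrizing $\mathbb{T}$-equivariant closed subschemes $X\subset \mathbb{C}^{N_1}\times\mathbb{C}\mathbb{P}^{N_2}$ equipped with a choice of Reeb element $\xi$; the moduli stack $\mathcal{M}^{Kss}_{n,\epsilon}$ of K-semistable polarized Fano fibrations is then a locally closed substack of $[\mathcal{H}/G]$ for the natural reductive structure group $G$ of linear symmetries. Openness of K-semistability should follow by extending the constructibility and lower semi-continuity of the $\delta$-invariant in \cite{BLX2022} to the weighted setting (with $\mathbb{W}$ in place of the usual volume), using the strict convexity \eqref{eq-second order derivative} of $\mathbb{W}$ along the Reeb cone to handle the extra parameter $\xi$. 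Existence of a separated good moduli space $M^{Kps}_{n,\epsilon}$ parametrizing K-polystable objects would then be extracted via the Alper--Halpern-Leistner--Heinloth criterion, with S-completeness and $\Theta$-reductivity reducing, modulo Conjectures \ref{c:1-1} and \ref{weighted volume mini conj}, to the uniqueness of optimal degenerations; properness of $M^{Kps}_{n,\epsilon}$ follows from the 2-step degeneration picture for one-parameter families.

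Projectivity requires defining a CM-type line bundle on $\mathcal{M}^{Kss}_{n,\epsilon}$ whose weights along a one-parameter subgroup recover the generalized Futaki invariant $\mathrm{Fut}_{\xi}(\eta)$ of Section \ref{s:K-stability}, and verifying that it descends to an ample line bundle on $M^{Kps}_{n,\epsilon}$ along the lines of \cite{XuZhuang2020positivity}. The hardest step will be boundedness: simultaneously bounding the noncompact base $Y$ as a polarized affine cone, the fibration $X\to Y$, and the varying Reeb element $\xi$, in a way compatible with Birkar's theorem, goes beyond what is handled by either the Fano or Fano-cone case and likely requires new inputs on the variation of Reeb cones in families. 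A secondary but substantial difficulty is formulating the correct CM line bundle in this mixed relative setting, where the usual intersection-theoretic definition must be replaced by an equivariant invariant encoding $\mathbb{W}$, and for which ampleness in full generality appears to require the full strength of Conjectures \ref{c:1-1}--\ref{weighted volume mini conj}.
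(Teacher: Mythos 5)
This statement is presented in the paper as a \emph{conjecture}, not a theorem: it has no proof in the paper, only a short paragraph immediately afterwards indicating the expected boundedness input and its analytic analogue (weak compactness of Ricci shrinkers), together with a remark that the $Y=\{\mathrm{pt}\}$ case of boundedness is known via the finite degree formula of \cite{Xu-Zhuang2021} combined with \cite{HMX}. There is therefore no ``paper's own proof'' to compare against, and a blind ``proof'' of the statement cannot succeed on the information currently available.

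With that caveat, your proposal is a sensible and well-informed roadmap rather than an argument. You correctly isolate the four standard pillars (boundedness, parameter stack, openness/S-completeness/$\Theta$-reductivity, ampleness of a CM-type line bundle), you correctly identify \eqref{e:weighted volume comparison} as the mechanism by which a lower bound on $\mathbb W(\xi)$ controls local normalized volumes, and you correctly flag the genuinely new obstacles: simultaneously bounding the noncompact affine base $Y$ together with the Reeb cone data, and formulating a CM-type invariant whose weight recovers $\mathrm{Fut}_\xi$ in this mixed relative setting. You also honestly acknowledge that several steps are conditional on Conjectures \ref{c:1-1} and \ref{weighted volume mini conj} and on extending results like \cite{BLX2022} to the weighted relative setting, none of which is established. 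In short: you have not proved the statement (nor could you, absent new mathematics), but as an expository blueprint your proposal is consistent with the brief indications the authors give and is a reasonable account of what a future proof would have to accomplish. One small correction: the paper's sketch of boundedness in the point-base case routes through the finite degree formula and \cite{HMX}, not directly through \cite{BLX2022,LXZ}; those enter later for openness and projectivity, as you do use them.
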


Here the weighted volume of a K-polystable polarized Fano fibration $(\pi: X\rightarrow Y, \xi)$ is defined to be ${\Bbb W}(\xi)$ (c.f. \eqref{e:weighted volume for vector fields}).  In particular, we expect:
    \begin{itemize}
\item (Boundedness) The set of K-semistable polarized Fano fibrations with weighted volume bounded below by a positive constant $\epsilon>0$ forms a bounded family.  
    \end{itemize}
 This question is related to the weak compactness of Ricci shrinkers (see \cite{HLW,LLW2021} for instance). 
Notice that in the case when $Y$ is a point,  the boundedness question above is known, as a consequence of \eqref{e:weighted volume comparison}; for instance, one way to see this is through 
 the finite degree formula in \cite{Xu-Zhuang2021} and the boundedness result in \cite{HMX}.

   In birational geometry, a well-known question is on the termination of flips in the minimal model program. This is related to bounding the weighted volume from below for the Fano fibrations coming from the extremal contractions along a fixed minimal model program.

\subsection{K\"ahler-Ricci flow I: finite time singularities}\label{finite time singularities}

Consider the Ricci-flow on a compact K\"ahler manifold $(X, \omega)$ 
\begin{equation}\label{Ricci flow equation}
\frac{\p}{\p t}\omega(t)=-\mathrm{Ric}(\omega(t))
\end{equation}
with $\omega(0)=\omega$. 
In \cite{SongTian}, it is proposed that analyzing this flow constitutes an analytic analogue of the minimal model program (with scaling) in birational geometry, often referred to as the analytic minimal model program. 
It is well-known \cite{TianZhang} that the flow exists exactly on $[0, T)$, where $T$ can be numerically computed as the supremum of all $t>0$ such that $[\omega]-2\pi tc_1(X)$ is a K\"ahler class. 

If $K_X$ is not nef, then $T<\infty$ and the flow must encounter finite time singularities at $T$. From now on we assume 
 $X$ is projective and the initial K\"ahler class $[\omega(0)]=2\pi c_1(L)$ for some line bundle $L$. Then by the Kawamata basepoint free theorem, we obtain a Fano fibration  $\pi: X\rightarrow Y$  induced by the semi-ample $\mathbb Q$-line bundle $L+TK_X$.
 When $X$ is Fano and $[\omega]\in 2\pi c_1(X)$, it is shown in \cite{CSW} that there is a 2-step degeneration from the normalized K\"ahler-Ricci flow to a unique K\"ahler-Ricci shrinker on a $\mathbb Q$-Fano variety.
 Motivated by this we make the following 

\begin{conjecture}[2-step degeneration for K\"ahler-Ricci flows]\label{c:KRF vs weighted volume}
Let $\pi:X\rightarrow Y$ be the Fano fibration at the singular time $T<\infty$.	 For any $p\in Y$ and $q\in \pi^{-1}(p)$,
  \begin{itemize}
      \item  The K\"ahler-Ricci flow $\omega(t)$ naturally induces a weighted volume minimizing (hence K-semistable) valuation on $(\pi: X\rightarrow Y, p)$. 
      \item The tangent flow of $\omega(t)$ at $q$ is unique, and is given by the K\"ahler-Ricci shrinker $(\widehat\pi: \mathcal S\rightarrow\mathcal C, \xi)$ canonically associated with the Fano fibration germ $(\pi: X\rightarrow Y, p)$ via Conjecture \ref{weighted volume mini conj} and Conjecture \ref{c:1-1}.
  \end{itemize}
  \end{conjecture}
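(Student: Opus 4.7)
The plan is to follow the structure of the 2-step degeneration picture established in \cite{DS2, CSW, DeSz, HL2024} for related settings, but now adapted to the parabolic and local (relative over $Y$) setting described by a Fano fibration germ. The strategy naturally splits into an analytic extraction step, an algebro-geometric matching step, and finally a uniqueness step.

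\emph{Step 1: Subsequential convergence to a shrinker tangent flow.} First I would apply Bamler's compactness and partial regularity theory for Ricci flow, together with the complex-analytic enhancements used in \cite{CSW,DeSz}, to the parabolic rescalings $(X,g(t_k+\tau/\lambda_k),q)$ based at $q\in\pi^{-1}(p)$. One obtains subsequential smooth Cheeger-Gromov convergence on the regular part to a (possibly singular) K\"ahler-Ricci shrinker $(\mathcal S_\infty, \xi_\infty, f_\infty)$ in the sense of the pluripotential-theoretic notion discussed in Section~\ref{sec--ytd type}. By Theorem~\ref{t:main} and its expected singular extension, $\mathcal S_\infty$ comes equipped with the structure of a polarized Fano fibration $\widehat\pi_\infty:\mathcal S_\infty\rightarrow \mathcal C_\infty$.

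\emph{Step 2: Extracting a weighted-volume minimizing valuation.} Next I would construct, following the philosophy of \cite{DS2} and the parabolic analogue in \cite{CSW}, a valuation $v_{g}\in \Val^{*}_{\pi,p}$ directly from the flow $\omega(t)$ as $t\nearrow T$, by measuring the rate of vanishing of holomorphic sections of $-mK_X$ (and of local functions on $Y$ near $p$) along the rescaled flow. The key analytic input is Perelman's monotonicity of the $\mu$-functional, localized near $q$ via the reduced volume/nash-entropy framework; combined with the formula \eqref{relation between weighted volume and mu functional} and its local analogue, this should yield the identity
\begin{equation}
e^{\mu_{\infty}+n}=\Bbb W(v_{g})=\Bbb W_{\widehat\pi_\infty}(\xi_\infty),
\end{equation}
where $\mu_\infty$ is the limiting value of Perelman's entropy along the sequence. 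On the other hand, the local-to-global monotonicity \eqref{e:weighted volume comparison} together with the K-semistability of the central fiber of any such tangent flow (expected by arguments analogous to \cite{DaSz, HL2023, BLLZ2023}) forces $v_g$ to minimize $\Bbb W$ on $\Val^{*}_{\pi,p}$.

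\emph{Step 3: Matching with the algebraic 2-step degeneration and uniqueness.} Granted Conjecture~\ref{weighted volume mini conj}, the minimizer $v_*$ is unique and quasi-monomial, and induces a canonical 2-step algebraic degeneration from $(\pi:X\to Y, p)$ to a K-polystable polarized Fano fibration $(\widehat\pi:\mathcal S\to\mathcal C,\xi)$. Combining Step~2 with the uniqueness of the weighted volume minimizer shows that $v_g=v_*$ independently of the rescaling subsequence, so the associated K-semistable intermediate fibration in Step~1 must be $(\tilde\pi:Z\to W,\xi)$ of Conjecture~\ref{weighted volume mini conj}. To upgrade this intermediate limit to the final K-polystable $(\widehat\pi:\mathcal S\to\mathcal C,\xi)$, I would adapt the Hamilton-Tian type argument of \cite{CSW}: the continued parabolic rescaling produces a further equivariant test configuration along the $\xi$-direction, and by the uniqueness part of Conjecture~\ref{c:1-1} it degenerates to the unique K\"ahler-Ricci shrinker on $(\widehat\pi,\xi)$, giving the claimed uniqueness of the tangent flow.

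\emph{Main obstacle.} The chief difficulty is the analytic step establishing that the flow-derived valuation $v_g$ is well-defined (independent of subsequences) and actually minimizes $\Bbb W$ on $\Val^{*}_{\pi,p}$. In the compact Fano setting \cite{CSW,HL2024} this rests on Perelman's global entropy and the Hamilton-Tian conjecture; in our relative setting one needs a \emph{localized} monotone quantity attached to $q\in\pi^{-1}(p)$ which recovers $\Bbb W$ on the algebraic side -- essentially a Nash-entropy/pointed-$\mu$-functional tailored to a Fano fibration germ. Establishing its monotonicity, upper semicontinuity, and agreement with $\Bbb W$ in the limit (the local analogue of \eqref{relation between weighted volume and mu functional}) is where most of the new analytic work would lie; once this is in hand, the algebraic pieces supplied by Conjectures~\ref{weighted volume mini conj} and \ref{c:1-1} assemble the 2-step degeneration.
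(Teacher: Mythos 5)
The statement you are addressing is labeled Conjecture~\ref{c:KRF vs weighted volume} in the paper, and the authors offer \emph{no proof} of it; it is stated explicitly as an open problem. There is therefore no argument in the paper to compare your proposal against. What the paper does provide is a short discussion immediately after the conjecture: it points to \cite{DS2,CSW} and the weak compactness theory of \cite{Bamler} as tools one could try to use, it notes partial progress by \cite{Hallgren2023} and the explicit special cases \cite{CCD,JST}, and it writes down a candidate formula for the flow-induced valuation,
\begin{equation*}
\nu(s)=\lim_{t\rightarrow T^-}\frac{\log \|s\|_{t}}{\log(T-t)},
\end{equation*}
in terms of $L^2$-norms computed with the evolving hermitian metrics $h_t$ on $-mK_X$. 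Your Step~2 gestures in the same direction (``measuring the rate of vanishing of holomorphic sections''), so your sketch is broadly consistent with the authors' intended line of attack.

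That said, you should be careful to present this as a research plan rather than a proof: every step in your outline relies on results that are themselves conjectural in this relative setting (Conjecture~\ref{weighted volume mini conj}, Conjecture~\ref{c:1-1}, the singular/local extension of Theorem~\ref{t:main}, a localized Perelman-type monotone quantity tailored to a Fano fibration germ and a local analogue of \eqref{relation between weighted volume and mu functional}, etc.). You correctly flag the chief difficulty — constructing and controlling a flow-derived valuation $v_g$ and proving it minimizes $\Bbb W$ on $\Val^{*}_{\pi,p}$ — and this is precisely what the authors leave open. In short, your proposal is a sensible roadmap consistent with the paper's stated heuristics, but it cannot be assessed as a proof because the target is an open conjecture and your argument outsources its critical steps to other conjectures and to unproven local monotonicity/semicontinuity statements.
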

  
  \begin{remark}
      As mentioned in the introduction, one can more generally consider a K\"ahler-Ricci flow space-time with singularities in a suitable sense and study the tangent flow at a singularity.  This will then include the above situation as well as the case of tangent cones at singularities of  K\"ahler-Einstein metrics in the sense of \cite{DS2} (which are self-similar Ricci flow solutions).
  \end{remark}
  \begin{remark}
    When the Fano fibration $\pi$ is birational, Conjecture \ref{c:KRF vs weighted volume} gives a precise characterization of the K\"ahler-Ricci shrinker appearing in the shrinker-to-expander transition, conjectured in \cite{Song17} for the analytic minimal model program.  As in \cite{Song17}, one expects there exists a K\"ahler-Ricci expander $\mathcal E$, whose asymptotic cone coincides with that of the K\"ahler-Ricci shrinker produced in this conjecture. The underlying variety of $\mathcal E$ is the unique canonical model for $K_{\mathcal S}$ over $\mathcal C$, whose existence is guaranteed by  \cite{BCHM}. There has been some work constructing K\"ahler-Ricci expanders; see \cite{Conlond-Deruelle} and the references therein.
  \end{remark}
 Theorem \ref{t:main1} is a first step towards proving Conjecture \ref{c:KRF vs weighted volume}. The uniqueness of tangent flows with singularities is in general a difficult question in geometric analysis, and the above conjecture leads to an algebro-geometric way of proving such uniqueness. This strategy has been successfully realized in \cite{DS2} for the uniqueness of tangent cones of singular K\"ahler-Einstein metrics and in \cite{CSW} for the uniqueness of tangent flows for the Ricci flow on Fano manifolds. It is possible to apply the techniques in \cite{DS2, CSW} and the weak compactness theory in \cite{Bamler} to tackle Conjecture \ref{c:KRF vs weighted volume}. 
 Some partial progress towards generalizing the infinitesimal result of \cite{DS2} has been made by \cite{Hallgren2023}. We mention that there are also results explicitly determining the tangent flows in special settings, see \cite{CCD,JST}.

 Regarding the first part of the conjecture, as in \cite{CSW} one can try to write down a formula for the expected valuation induced by the K\"ahler-Ricci flow $\omega(t)$. 
Fix a smooth hermitian metric $H_T$ on the $\mathbb Q$-line bundle $L+TK_X$. Combining this with the hermitian metrics on $L+tK_X$, whose curvature forms are $\omega_t$, we obtain a family of hermitian metrics $h_t$ on $-K_X$ (and thus also on $-mK_X$). We normalize $h_t$ so that its associated volume form has total mass $1$. For a holomorphic section $s$ of $-mK_X$ defined in a neighborhood $U$ of $\pi^{-1}(p)$,  we define the $L^2$-norm 
 $$\|s\|_t^2=\frac{1}{(T-t)^n}\int_U |s|_{h_t}^2 \omega_t^n.$$
 Then we expect the following
 \begin{equation}
\nu(s):=\lim_{t\rightarrow T^-}\frac{\log \|s\|_{t}}{\log(T-t)}
     \end{equation}
     is well-defined and gives rise to a valuation (independent of the choice of $H_T$ and $U$) on the ring $\mathcal{R}$ (see \eqref{e:4-1}),  which induces the K-semistable valuation in $\Val_{\pi, p}^*$.

\subsection{K\"ahler Ricci flow II: ancient solutions}
By drawing an analogy with the theory of complete Ricci-flat K\"ahler metrics with Euclidean volume growth, one can also reverse the rescaling direction and consider non-collapsed ancient solutions to the K\"ahler-Ricci flow. By definition this means a K\"ahler-Ricci flow $g(t) (t\in (-\infty, 0])$ of complete K\"ahler metrics on a fixed complex manifold $X$ with  Perelman's $\mu$-functional $\mu(g(t),-t)$ uniformly bounded from below. Algebro-geometrically we introduce: 

\begin{definition}[Non-collapsing weak Fano fibration]
   A non-collapsing weak Fano fibration is a fibration $\pi: X\rightarrow Y$, where $X$ has klt singularities, $Y$ is a normal affine variety and 
   $-K_X$ is a relatively nef and big $\mathbb Q$-Cartier divisor.
\end{definition}

Notice that the relative bigness of $-K_X$ and the affineness of $Y$ reflect the essence of the non-collapsing condition. Denote $\mathfrak R_m=H^0(X, -mK_X)$. In particular, $\mathfrak R_0$ is the ring of regular functions on $X$ (and on $Y$), which is a finitely generated $\mathbb C$-algebra. Let $\mathfrak R:=\bigoplus_{m\geq 0}\mathfrak R_m$ be the global anti-canonical ring. By the relative base point free theorem \cite[Theorem 3.24]{KM}, $\mathfrak R$ is a finitely generated graded algebra over $\mathfrak R_0$.
A negative valuation on a non-collapsing weak Fano fibration $\pi: X\rightarrow Y$ consists of a function $d: \mathfrak{R}\setminus \{0\}\rightarrow \mathbb R_{\leq 0}$ satisfying 
\begin{itemize}
    \item $d(s_1+s_2)\geq \min\{d(s_1), d(s_2)\}$;
    \item $d(s_1 s_2)=d(s_1)+d(s_2)$.
\end{itemize}

\begin{definition}
    We say a negative valuation is K-polystable if the associated graded algebra is finitely generated and with the reversed grading it defines a K-polystable polarized Fano fibration $(\pi: \mathcal S\rightarrow \mathcal C, \xi)$.
\end{definition}
 We make the following conjecture, which serves as a global version of Conjecture \ref{c:KRF vs weighted volume}.

\begin{conjecture}[Tangent flows at infinity]\label{tangent cone at infinity}
Given a non-collapsed ancient K\"ahler-Ricci flow $(X, \omega(t))(t\in (-\infty, 0])$. Then
\begin{itemize}
\item $X$ admits a non-collapsing weak Fano fibration structure and the flow naturally induces a K-polystable negative valuation;
    \item The tangent flow at infinity of $\omega(t)$ is unique and is given by the K\"ahler-Ricci shrinker determined by the K-polystable negative valuation via Conjecture \ref{c:1-1}. 
\end{itemize}
\end{conjecture}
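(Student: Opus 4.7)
The proof strategy parallels the two-step degeneration theory of \cite{DS2} and \cite{CSW}, extended to the non-compact ancient setting by means of Bamler's weak compactness theory \cite{Bamler}. First, I would apply Bamler's theory to the rescaled flows $(X, \lambda^{-1}\omega(\lambda t))$ as $\lambda \to \infty$: the non-collapsing hypothesis (a uniform lower bound on $\mu(g(t), -t)$) guarantees that along any subsequence one extracts a (possibly singular) K\"ahler-Ricci shrinker $(\mathcal{S}_\infty, \omega_\infty, f_\infty)$. By the singular extension of Theorem \ref{t:main1} anticipated in the remark following Proposition \ref{p:compact critical set}, each such limit carries a canonical polarized Fano fibration structure $(\widehat\pi: \mathcal{S}_\infty \to \mathcal{C}_\infty, \xi_\infty)$, providing the algebro-geometric target of the convergence.

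Second, I would transfer this structure back to $X$ by constructing the weak Fano fibration $\pi: X \to Y$ directly from the flow. Using H\"ormander $L^2$ estimates at large negative times, together with Fourier decomposition along the asymptotic torus action coming from the soliton vector field of the limit, one expects the ring of polynomially growing regular functions $\mathfrak{R}_0$ and the global anti-canonical ring $\mathfrak{R} = \bigoplus_m H^0(X, -mK_X)$ to be finitely generated, yielding $Y = \Spec(\mathfrak{R}_0)$. In parallel with the prescription in Section \ref{finite time singularities}, one would define the canonical negative valuation on $\mathfrak{R}$ by
\begin{equation*}
\nu(s) := \lim_{t \to -\infty} \frac{\log \|s\|_t}{\log|t|}, \qquad \|s\|_t^2 := \frac{1}{|t|^n}\int_X |s|_{h_t}^2 \,\omega(t)^n,
\end{equation*}
where $h_t$ is the Hermitian metric on $-mK_X$ induced from $\omega(t)$ and normalized so that the associated volume form has unit mass. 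The existence of the limit and the ultrametric property should follow from monotonicity of Perelman's functionals along the flow \cite{LiWang, CN09}.

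Third, one must verify that $\nu$ is K-polystable and identify the degeneration produced by $\nu$ with the tangent flow at infinity. K-semistability should follow from comparing weighted volumes via the identity \eqref{relation between weighted volume and mu functional} linking $\Bbb W$ with Perelman's $\mu$-functional: the weighted volume of any test-configuration central fiber must dominate the asymptotic value of $\mu(g(t), -t)$. Passage to K-polystability and the identification of the associated graded degeneration with $(\mathcal{S}_\infty, \xi_\infty)$ would rely on an algebraic step in the spirit of \cite{BLLZ2023}. The main obstacle will be the uniqueness of the tangent flow at infinity: one must show that any two subsequential blow-down limits $\mathcal{S}_\infty, \mathcal{S}_\infty'$ produce the same graded ring via $\nu$, and then that two K\"ahler-Ricci shrinkers with isomorphic graded data are biholomorphic by an isomorphism matching the soliton vector fields. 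In the compact Fano case \cite{CSW} this rigidity rests on a Lojasiewicz-type inequality for Perelman's entropy together with smooth convergence; in the present non-compact ancient setting, behavior at spatial infinity (cf.\ Conjecture \ref{C:asymp conicity}) couples with the asymptotics as $t \to -\infty$, and controlling the two simultaneously appears to require genuinely new analytic input beyond what is directly available from Bamler's theory. This is the deepest difficulty in the program.
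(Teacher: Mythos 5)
The statement is a \emph{conjecture}; the paper does not prove it but only sketches the expected construction of the negative valuation and records a few heuristics, so there is no paper proof to compare against. That said, your outline tracks the path the authors indicate: Bamler's weak compactness theory \cite{Bamler} to extract shrinker blow-down limits, a singular extension of Theorem~\ref{t:main1} to make those limits algebraic, the valuation defined by the $L^2$-norms $\|s\|_t$, and the bridge to Perelman's $\mu$-functional via \eqref{relation between weighted volume and mu functional}; you also correctly locate the deepest obstacle in the uniqueness of the blow-down, and correctly note that the Lojasiewicz-entropy mechanism underpinning \cite{CSW} in the compact Fano case does not transfer directly because the spatial-infinity behavior and the $t\to-\infty$ asymptotics are coupled.

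A few discrepancies are worth flagging. First, your formula $\nu(s)=\lim_{t\to-\infty}\log\|s\|_t/\log|t|$ drops the square on $\|s\|_t$ relative to the paper's prescription $d(s)=\lim_{t\to-\infty}\log\|s\|_t^2/\log(-t)$, a factor-of-two normalization that affects the grading of the associated graded ring and should be reconciled. Second, and more substantively, the paper explicitly anticipates a ``no-semistability at infinity'' phenomenon, motivated by \cite{SunZ}: the negative valuation produced by the flow is expected to be already K-polystable, not merely K-semistable, so the 2-step degeneration of Conjecture~\ref{weighted volume mini conj} collapses to a single step in this ancient-flow setting. Your plan passes through K-semistability and then an algebraic upgrade in the spirit of \cite{BLLZ2023}; you should explain why there is in fact no residual degeneration, rather than invoking the generic 2-step picture. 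Third, your second step builds the weak Fano fibration and the finite generation of $\mathfrak R_0$ and $\mathfrak R$ from H\"ormander estimates relative to an asymptotic torus action, but that torus action is itself part of what must be produced from the shrinker limit in your first step; the order of quantifiers there needs care, since finite generation of $\mathfrak R$ is part of the conjectured conclusion, not a given.
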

Similar to the discussion in Section \ref{finite time singularities}, we can write down a formula for the negative valuation determined by the ancient flow. For simplicity, we assume that $t^{-1}(\omega(0)-\omega(t))$ is given by the curvature form of a hermitian metric $h_t$ on $-K_X$, and we assume $h_t$ can be normalized so that the total mass of the associated volume form on $X$ is $1$. Then  on the (holomorphic) anti-canonical section ring of $X$, we can consider:
\begin{equation}
    \|s\|_t^2:=\frac{1}{(-t)^n}\int_X |s|_{h_t}^2 \omega_t^n \text{ and } d(s):=\lim_{t\rightarrow -\infty} \frac{\log \|s\|^2_{t}}{\log(-t)}.
    \end{equation}
    We expect that $d$ is well-defined on the subspace consisting of sections $s$ with $\|s\|_t<\infty$,  and gives a K-polystable negative valuation.

Note that a ``no-semistability at infinity" phenomenon is expected in Conjecture \ref{tangent cone at infinity}, which is motivated by \cite{SunZ}. Moreover,  conjecturally non-collapsed ancient K\"ahler-Ricci flows can be characterized by the following algebraic data: 

\begin{itemize}
\item A choice of a K-polystable polarized Fano fibration $(\pi: \mathcal S\rightarrow \mathcal C, \xi)$.
    \item A choice of a K-polytable negative valuation on a non-collapsing weak Fano fibration $(\pi:X\rightarrow Y)$, which degenerates it to $(\pi: \mathcal S\rightarrow \mathcal C, \xi)$.
    \item A choice of a K\"ahler class on $X$ (corresponding to $[\omega(0)]$).
\end{itemize}

\begin{remark}
    The weighted volume of the K\"ahler-Ricci shrinkers obtained in Conjecture \ref{c:KRF vs weighted volume} and \ref{tangent cone at infinity} are related to Perelman's ${\mathcal W}$-entropy along the Ricci flow; see \cite{MM2015,GXu,MaZhang,Hallgren}.
\end{remark}

\appendix 

\section{Proof of Proposition \ref{lem-convergence of measures}}
 If $Y$ is a point, this has been proved in \cite{BoucChen2011}, so we only consider the case when $\dim Y>0$.
Note that 
\begin{equation}
    \mathrm{DH}_m=\frac{1}{(rm)^n}\frac{d}{d t}\dim_{\mathbb C}(R_{m}/\mathcal{F}_v^{trm} R_{m}).
\end{equation}In the following, for simplicity of notations, we assume $r=1$ and it follows from the proof that the limit measure does not change if we replace $L$ with a multiple.
    By the compactness of Radon measures and the uniform estimates \eqref{estimate for the length}, we know that $\mathrm {DH}_m$ is weakly precompact. As in \cite{BoucChen2011}, by the dominated convergence theorem, it suffices to show that the limit 
    \begin{equation}
        \lim_{m\rightarrow \infty}\frac{1}{m^n} \dim_{\mathbb C}(R_{m}/\mathcal{F}_v^{tm} R_{m})
    \end{equation}exists for  almost every $t\geq0$. Indeed, in the case $\dim Y>0$, we can show that the above limit exists for every $t\geq0$.

The strategy is to apply the following theorem  due to  \cite{KK08} and  \cite{LM2009}.   
\begin{theorem}[\cite{LM2009}]\label{okounkov body} Denote by $\mathbb N$ the set of non-negative integers and let $\Gamma \subset \mathbb N^{n+1}$ be a sub-semigroup. For $m\in \mathbb N$, we denote
$\Gamma_m=\Gamma\cap(\mathbb N^n\times \{m\}),
   $ which can be viewed as a subset of $\mathbb N^n$. Suppose $\Gamma$ satisfies the following three conditions: 
   \begin{itemize}
    \item [(1).] $\Gamma_0=\{0\}$;
    \item [(2).] There exist finitely many vectors $(v_i,1)$ spanning a semigroup $B\subset \mathbb N^{n+1}$ such that $\Gamma \subset B$;
    \item [(3).] $\Gamma$ generates $\mathbb Z^{n+1}$ as a group.
   \end{itemize}Then the limit
   $$\lim_{m\rightarrow \infty}\frac{\#\Gamma_m}{m^n}$$
exists and equals the Euclidean volume of the Okounkov body of the semigroup $\Gamma$.
 \end{theorem}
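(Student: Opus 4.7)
The plan is to bracket $\#\Gamma_m$ between two Ehrhart-type counts for the Okounkov body and to supply the lower bound by a Khovanskii-style semigroup fill-in lemma. Let $\Sigma \subset \mathbb{R}^{n+1}$ denote the closed convex cone generated by $\Gamma$. Condition (2) forces $\Sigma$ to sit inside the strictly convex cone spanned by the vectors $(v_i,1)$, so
\begin{equation*}
\Delta := \Sigma \cap (\mathbb{R}^n \times \{1\})
\end{equation*}
is a bounded convex body in $\mathbb{R}^n$; this is the Okounkov body of $\Gamma$. Since $\Gamma_m \subset m\Delta \cap \mathbb{Z}^n$, the classical Riemann-sum asymptotics $\#(mK \cap \mathbb{Z}^n) = m^n \Vol(K)+O(m^{n-1})$ for any convex body $K$ immediately give the easy upper bound $\limsup_{m\to\infty} \#\Gamma_m/m^n \leq \Vol(\Delta)$.

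The heart of the argument is the matching lower bound, which I would obtain via the following finiteness statement in the spirit of Khovanskii: there exist an integer $p \geq 1$ and a constant $C>0$, depending only on $\Gamma$, such that every lattice point $\alpha \in \Sigma \cap \mathbb{Z}^{n+1}$ whose distance to $\partial\Sigma$ is at least $C$ satisfies $p\alpha \in \Gamma$. To establish this, pick finitely many elements $\gamma_1,\ldots,\gamma_N \in \Gamma$ whose $\mathbb{Z}$-span equals $\mathbb{Z}^{n+1}$ (using condition (3)), and a fixed interior element $\gamma^* \in \Gamma$. A deep interior point $\alpha$ admits a Carath\'eodory-type decomposition as an $\mathbb{R}_{>0}$-combination of a spanning subset of the $\gamma_i$ with each coefficient bounded below by a fixed $\delta>0$, thanks to the strict convexity inherited from condition (2). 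Rounding the real coefficients to integers produces an error lying in a bounded region of $\mathbb{Z}^{n+1}$; adding a uniformly bounded multiple of $\gamma^*$ then absorbs this error while keeping the total inside $\Gamma$. The integer multiplier $p$ arises from clearing denominators in the expression of $\alpha$ in the chosen basis.

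With this lemma in hand, $\Gamma_{pm}$ contains every lattice point of $(pm)\Delta$ at distance $\geq C$ from the boundary, up to a set of cardinality $O((pm)^{n-1})$; the complementary boundary layer likewise contributes only $O(m^{n-1})$, and is negligible after dividing by $(pm)^n$. Hence $\liminf_{m \to \infty} \#\Gamma_{pm}/(pm)^n \geq \Vol(\Delta)$, and a coset comparison between $\Gamma_m$ and $\Gamma_{m+p}$ (via translation by a fixed element of $\Gamma_p$) removes the divisibility by $p$, yielding the full limit $\lim_{m\to\infty} \#\Gamma_m/m^n = \Vol(\Delta)$. I expect the main obstacle to be the Khovanskii fill-in lemma: both hypotheses (2) and (3) are genuinely essential, since dropping either allows pathological sub-semigroups whose complement inside $\Sigma \cap \mathbb{Z}^{n+1}$ contains arbitrarily deep points, breaking the inclusion that drives the lower bound.
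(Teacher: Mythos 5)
The paper does not prove this statement --- it is quoted from Lazarsfeld--Mustață \cite{LM2009} and Kaveh--Khovanskii \cite{KK08} --- so there is no internal proof to compare against; I am evaluating your argument on its own merits.

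Your overall strategy (upper bound by counting lattice points of a dilate of the Okounkov body, lower bound by a Khovanskii-type semigroup fill-in) is exactly the strategy used in those references, and the upper bound and the final $m \leftrightarrow m+p$ coset step are fine. The gap is in the precise form of the fill-in lemma you assert. You claim there are $p\geq 1$ and $C>0$ so that \emph{every} lattice point of $\Sigma\cap\mathbb Z^{n+1}$ at distance $\geq C$ from $\partial\Sigma$ has $p\alpha\in\Gamma$, and you try to prove it by a Carath\'eodory decomposition over a fixed finite spanning set $\gamma_1,\ldots,\gamma_N$. But $\text{cone}(\gamma_1,\ldots,\gamma_N)$ is in general a proper subcone of $\Sigma$: when $\Gamma$ is not finitely generated, $\Sigma$ is only the \emph{closure} of the cone spanned by $\Gamma$, and its boundary can contain irrational directions approached by elements of $\Gamma$ but never attained by any finite subset. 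In that situation the portions of $\Sigma$ near such a face escape $\text{cone}(\gamma_i)$ by an amount growing linearly with the height, so no bounded $C$ (and no fixed multiplier $p$, which just rescales the linear divergence) can work. A concrete counterexample in $\mathbb N^2$: take $\Gamma$ generated by $(0,1)$, $(1,1)$, and $(\lfloor\sqrt2\,2^k\rfloor,2^k)$ for all $k\geq 1$; then $\Sigma$ is the cone $\{0\leq a\leq \sqrt 2\,m\}$, condition (2) holds with $B$ generated by $(0,1),(2,1)$, and (3) holds, yet the rightmost element of $\Gamma_m$ lags $\lfloor\sqrt 2 m\rfloor$ by a quantity of order $\log m$, which eventually exceeds any fixed $C$ (and $p\Gamma_m\subset\Gamma_{pm}$ inherits the same unbounded lag). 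So your fill-in statement is false as stated, and the Carath\'eodory sketch cannot repair it because the decomposition necessarily uses generators approaching the irrational face, with no uniform lower bound $\delta$ on the coefficients.

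The correct route, which is what \cite{LM2009,KK08} actually do, is an approximation argument: for each $\epsilon>0$ choose a finite $S\subset\Gamma$ generating $\mathbb Z^{n+1}$ with $\Vol\big(\Delta(\text{cone}(S))\big)>\Vol(\Delta)-\epsilon$, and apply Khovanskii's theorem for the \emph{finitely generated} semigroup generated by $S$ (for which a translate $\gamma_0+\text{cone}(S)$ is genuinely filled). This yields $\liminf_m \#\Gamma_m/m^n \geq \Vol(\Delta)-\epsilon$ for every $\epsilon$, and combined with your upper bound gives the limit. Note also that since condition (3) asks for $\Gamma$ to generate $\mathbb Z^{n+1}$ itself (not a finite-index subgroup), the multiplier $p$ you introduce is unnecessary even in the finitely generated sub-step; Khovanskii's theorem gives a single translate $\gamma_0$ with $p=1$.
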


Take a log resolution $\varphi:\widetilde X\rightarrow X$, then the center of $v$ on $\widetilde X$ satisfies that  $\overline{\{c_{\widetilde X}(v)\}}\subset \varphi^{-1}(\pi^{-1}(p))$. Choose a point $\widetilde q\in \overline{\{c_{\widetilde X}(v)\}}$ such that $c_{\widetilde X}(v)$ is smooth in a neighborhood of $\tilde q$. In an affine chart around $\widetilde q$ we assume $c_{\widetilde X}(v)$ is cut out by $k$ regular functions $z_1, \cdots, z_k$. Then $v(z_i)>0$ for all $i=1,\cdots,k$.  We may add regular functions $z_{k+1}, \cdots, z_n$ such that $\{z_1, \cdots, z_n\}$ form local analytic coordinates around $\widetilde q$.

Choose  rationally independent positive real numbers $\alpha_1,\cdots,\alpha_n$, with 
\begin{equation}\label{upper bound for alpha_i}
    \alpha_i\leq \min \{v(z_i), 1\}, i=1\cdots,k.
\end{equation} Consider two quasi-monomial valuations $\mathrm{val}$ and $\widetilde v$ given by 
 \begin{equation}
     \mathrm{val}(f)=\min\{\sum^n_{i=1} \alpha_i\beta_i\mid c_{\beta}\neq 0\}. \quad \tilde v(f)=\min\{\sum^k_{i=1} \alpha_i\beta_i\mid c_{\beta}\neq 0\},
 \end{equation}
 for $0\neq f\in \oo_{\widetilde X,\widetilde q}$,  written in the completion  $\widehat{\oo_{\widetilde X,\widetilde q}}\simeq \mathbb C[[ z_1, \ldots, z_n ]]$ as $f=\sum_{\beta} c_\beta z^\beta$ with $c_{\beta}\in \mathbb C$. 
By the choice of $\alpha_i$ \eqref{upper bound for alpha_i} and \cite[Proposition 3.1]{JM2012}, we know that for $f\in \oo_{\widetilde X,c_{\widetilde X}(v)}$,
\begin{equation}\label{bounded below by quasimonomial}
    v(f)\geq \tilde v(f).
\end{equation}
Since the $\alpha_i$'s are rationally independent, for $f\in \oo_{\mathrm{val}}\setminus \{0\}$, we can define 
\begin{equation}\label{definition of vec valuation}
    \vv{\mathrm{val}}(f):=(\mathrm{val}_1(f),\cdots, \mathrm{val}_n(f))\in \mathbb N^n
\end{equation}
 if $ \mathrm{val}(f)=\mathrm{val}_1(f)\alpha_1+\cdots \mathrm{val}_n(f)\alpha_n$. Then we have the following result.
\begin{lemma}
    There exists a positive constant $C$ such that for any $m\geq 0$ and $0\neq s\in R_m$, we have 
    \begin{equation}\label{eq--upper bound of val}
          \sum_{i=1}^n\mathrm{val}_i(s)\leq C(\widetilde v(s)+m).
    \end{equation}
\end{lemma}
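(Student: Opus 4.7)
The plan is to reduce the statement to a bound on the usual multiplicity $\mathrm{ord}_{\widetilde q}(s)$ at $\widetilde q$, and then exploit the Fano fibration structure and the projectivity of the center $Z$ inside $\varphi^{-1}(\pi^{-1}(p))$. The starting point is the chain of elementary inequalities
\begin{equation*}
\sum_{i=1}^n \mathrm{val}_i(s) \leq \frac{1}{\alpha_{\min}}\, \mathrm{val}(s) \leq \frac{\alpha_{\max}}{\alpha_{\min}}\, \mathrm{ord}_{\widetilde q}(s),
\end{equation*}
where the second bound uses $\mathrm{val}(s)\leq \mathrm{val}(z^\beta)=\sum_i\alpha_i\beta_i\leq \alpha_{\max}\sum_i\beta_i$ applied to a monomial $z^\beta$ in the formal Taylor expansion of $s$ at $\widetilde q$ of minimal total degree. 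This reduces the lemma to showing $\mathrm{ord}_{\widetilde q}(s)\leq C'(\widetilde v(s)+m)$ with $C'$ independent of $s$ and $m$.

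Next I would split the vanishing at $\widetilde q$ into a ``along $Z$'' part and a ``tangential'' part. Let $a=\mathrm{ord}_Z(s)$ where $Z$ is the irreducible closure of $c_{\widetilde X}(v)$ through $\widetilde q$, locally cut out by $z_1,\ldots,z_k$. Since $\widetilde v(s)\geq \alpha_{\min}\cdot a$, we have $a\leq \alpha_{\min}^{-1}\widetilde v(s)$. Choosing a monomial in $s$ realizing $\mathrm{ord}_Z(s)=a$ with minimal tangential degree $b_s := \sum_{i>k}\gamma_i$, one has $\mathrm{ord}_{\widetilde q}(s)\leq a + b_s$. Sheaf-theoretically $b_s$ equals the order of vanishing at $\widetilde q$ of the nonzero leading term $\sigma_a(s)$ of $s$ along $Z$, which is a global section of $\mathrm{Sym}^a(N_{Z/\widetilde X}^{\vee})\otimes \varphi^*(mL)|_Z$ on the projective variety $Z\subset \varphi^{-1}(\pi^{-1}(p))$ (projective because $\pi$ is projective and $\varphi$ is proper).

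The central step is then to establish a uniform linear bound $b_s \leq C_1 a + C_2 m$. For this I would use that $L|_{\pi^{-1}(p)}$ is ample by the Fano fibration hypothesis, so $\varphi^*L|_Z$ is nef, and fix any ample line bundle $A$ on $Z$. After a suitable twist the conormal sheaf $N_{Z/\widetilde X}^{\vee}$ embeds into a direct sum of copies of $A$, and hence $\mathrm{Sym}^a(N_{Z/\widetilde X}^{\vee})\otimes \varphi^*(mL)|_Z$ embeds, for constants depending only on the fixed data, into a direct sum of copies of $A^{\otimes(C_1 a + C_2 m)}$. A standard Seshadri-type estimate on the projective $Z$ bounds the order of vanishing at $\widetilde q$ of any nonzero section of $A^{\otimes N}$ by $O(N)$, which yields the required control on $b_s$ and combines with the first reduction to give $\mathrm{ord}_{\widetilde q}(s)\leq (1+C_1/\alpha_{\min})\,\widetilde v(s) + C_2 m$.

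The step I expect to require the most care is the uniform vanishing-order estimate on $Z$: one must ensure that the constants $C_1$, $C_2$ depend only on the fixed geometric data $(X,Y,\pi,L,\varphi,Z,\widetilde q,A)$ and not on $a$, $m$, or the particular section $s$. A mild additional subtlety is that $N_{Z/\widetilde X}^{\vee}$ may be badly behaved where $Z$ is singular, which can be remedied by passing to a further log resolution adapted to $Z$ (or by working with the conormal sheaf on the smooth locus of $Z$ and extending across the singular locus).
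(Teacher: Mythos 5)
Your proposal takes essentially the same decomposition as the paper—split the vanishing order into a normal piece controlled by $\widetilde v(s)$ and a tangential piece controlled by positivity on a projective parameter space—but implements the tangential bound differently, and the implementation has a genuine gap. The paper blows up $Z=\overline{\{c_{\widetilde X}(v)\}}$, so that the leading term of $s$ is a section of a \emph{line bundle} $(\varphi'^*\varphi^*(mL)-c(s)E)|_E$ on the projective variety $E$, and then bounds the quasi-monomial valuation $\mathrm{val}'$ by appealing to \cite[Proposition 2.12]{BKMS}. You instead stay on $Z$ and work with the symbol $\sigma_a(s)$ as a section of $I_Z^a/I_Z^{a+1}\otimes\varphi^*(mL)|_Z$. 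This is where the trouble is: when $Z$ is singular away from $\widetilde q$, the natural map is a \emph{surjection} $\mathrm{Sym}^a(I_Z/I_Z^2)\twoheadrightarrow I_Z^a/I_Z^{a+1}$, not an isomorphism, so $\sigma_a(s)$ does not automatically sit inside $\mathrm{Sym}^a(N_{Z/\widetilde X}^\vee)\otimes mL$ and your subsequent embedding of this sheaf into $(A^{\otimes(C_1a+C_2m)})^{\oplus r}$ is not available. The sheaf $I_Z^a/I_Z^{a+1}$ can also have torsion at the singular points of $Z$, so it need not embed into a direct sum of line bundles at all, and the rank of the locally free part grows with $a$, making the uniform bound $b_s\leq C_1 a+C_2 m$ nontrivial to extract. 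Your proposed remedies—a further log resolution adapted to $Z$, or extending across the singular locus—are not as mild as you suggest: blowing up the singular locus of $Z$ changes $\widetilde X$ and hence may change the center $\overline{\{c_{\widetilde X}(v)\}}$, and passing to a reflexive hull of the conormal sheaf replaces $I_Z^a/I_Z^{a+1}$ by a sheaf that receives no obvious map from it. None of these issues arise on $E$, where everything is a rank-one sheaf by construction and the needed positivity bound is a citable statement about quasi-monomial valuations. In short: the reduction $\sum_i\mathrm{val}_i(s)\leq\frac{\alpha_{\max}}{\alpha_{\min}}\mathrm{ord}_{\widetilde q}(s)$, the bound $a\leq\alpha_{\min}^{-1}\widetilde v(s)$, the inequality $\mathrm{ord}_{\widetilde q}(s)\leq a+b_s$ (using that $Z$ is smooth at $\widetilde q$, so the local and divisorial vanishing orders along $Z$ agree there), and the Seshadri-type vanishing bound for sections of $A^{\otimes N}$ are all sound; what needs to be repaired is the step producing a global section of a sheaf with controlled positivity, and the cleanest repair is precisely the blowup construction that the paper uses.
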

\begin{proof}
    We consider  the blowup $\varphi': X'\rightarrow \widetilde X$ of $\widetilde X$ along the subvariety $\overline{\{c_{\widetilde X}(v)\}}$ and let $E$ denote the exceptional divisor.
Then there exists a positive constant $C$ such that for any $0\neq s\in R_m$, there exists a non-negative integer $c(s)$ satisfying \begin{equation}\label{upper bound for vanishing order}
    c(s)\leq C \widetilde v(s),
\end{equation} and $s$ induces a non-zero section $s'$ in 
\begin{equation}
    H^0(E,mL-c(s)E).
\end{equation}
Note that the valuation $\mathrm{val}$ also naturally induces a valuation $\mathrm{val}'$ on $E$. Without loss of generality, we may assume $\mathrm{val}(z_1)=\min\{\mathrm{val}(z_i):i=1,\cdots,k\}$. Let $q'\in X'$ denote the point, around which we have local analytic coordinates: $\{z_1,u_i=\frac{z_i}{z_1},  z_j\mid i=2,\cdots,k,j=k+1,\cdots,n\}$, where $E$ is locally given by $\{z_1=0\}$. Then we define $\mathrm{val}'$ to be the quasi-monomial valuation on $E$ centered at $q'$ with the following weights:
\begin{equation}\label{induced valuation}
    \mathrm{val}'(u_i)=\alpha_i-\alpha_1, i=1,\cdots,k, \quad \mathrm{val}'(z_j)=\alpha_j, j=k+1,\cdots,n.
\end{equation}
Then since $\alpha_1\leq 1$, we can easily check that 
\begin{equation}\label{decomposition of valuations}
    \sum_{i=1}^n\mathrm{val}_i(s)\leq \frac{\mathrm{val}(s)}{\min_{i=1}^n  \alpha_i}\leq  \frac{c(s)+\mathrm{val}'(s')}{\min_{i=1}^n  \alpha_i}.
\end{equation}Fix a very ample line bundle $A$ such that both $L+A$ and $A-E$ are very ample, then we have injective maps:\begin{equation}
    H^0(E,mL-c(s)E)\hookrightarrow H^0(E, (m+c(s))(L+2A-E)).
\end{equation}By definition, $\mathrm{val}'$ is a quasi-monomial valuation on $E$, then by \cite[Proposition 2.12]{BKMS},  there is a positive constant $C$ independent of $s\in R_m$ and $m$ such that
\begin{equation}\label{control on exceptional divisor}
\mathrm{val}'(s')\leq C(c(s)+m).
\end{equation}Then \eqref{eq--upper bound of val} follows from \eqref{upper bound for vanishing order}, \eqref{decomposition of valuations},  and \eqref{control on exceptional divisor}.
\end{proof}

Since the valuation $\mathrm{val}$ is centered at a closed point and the weights $\alpha_i$ are rationally independent, from \eqref{bounded below by quasimonomial} and \eqref{eq--upper bound of val}, we obtain that
 there exists a positive number $N$, independent of $m$ and $t$ such that \begin{equation}\label{conunting lattice points}
 \begin{aligned}
           \dim_{\mathbb C}(R_m/ \mathcal{F}_v^{tm}R_m)=&\#\{(d_1,\cdots, d_n)\in \vv{\mathrm{val}}(R_m)\mid \sum_{i=1}^n d_i\leq N(t+1)m\}\\
           &-\#\{(d_1,\cdots, d_n)\in \vv{\mathrm{val}}(\mathcal F_v^{tm}R_m)\mid \sum_{i=1}^n d_i\leq N(t+1)m\}.\\
 \end{aligned}
 \end{equation}
 Then we have two sub-semigroups of $\mathbb N^{n+1}$ given by 
 \begin{equation}
     \begin{aligned}
         &\Gamma=\{(d_1,\cdots, d_n,m)\mid (d_1,\cdots d_n)\in \vv{\mathrm{val}}(R_m), \sum_{i=1}^n d_i\leq N(t+1)m\},\\
         &\Gamma'=\{(d_1,\cdots, d_n,m)\mid (d_1,\cdots d_n)\in \vv{\mathrm{val}}(\mathcal F_v^{tm}R_m), \sum_{i=1}^n d_i\leq N(t+1)m\}.
     \end{aligned}
 \end{equation}
It is then clear that $\Gamma$ and $\Gamma'$ satisfy Condition (1) and Condition (2), since 
\begin{equation}\label{verify second condition}
\Gamma\subset \{(d_1,\cdots, d_n,m)\mid  \sum_{i=1}^n d_i\leq N(t+1)m\}.
\end{equation}
Therefore we have a uniform dimension estimate for $m\geq 1 $,
 \begin{equation}\label{estimate for the length}
	\dim_{\mathbb C}(R_m/ \mathcal F^{>\lambda m}_vR_{m})\leq C'm^n(\lambda+1)^n,
	\end{equation}
which implies that $\mathrm{DH}_m$ is well-defined.

Since $\Gamma'\subset \Gamma$, to check Condition (3), it suffices to show it holds for $\Gamma'$. We may also allow $N$ to increase if necessary.
 As $L$ is relatively ample, we can find  $m_0$ such that for all $m\geq m_0$, there exists an $f_m\in R_m$ with 
\begin{equation}\label{eq--holomorphic with zero valuation}
     \mathrm{val}(f_m)=0.
 \end{equation} Note that $v$ induces a valuation on $\mathcal O_{Y,p}$ as there is a natural inclusion $\mathcal O_{Y,p}\hookrightarrow K(X)$ via $\pi^*$. Since $\dim Y>0$ and $v$ is centered in $\pi^{-1}(p)$, we can choose a non-zero element $h\in \mathfrak m_p\subset \oo_{Y,p}$ such that 
 \begin{equation}
     v(h)\geq t+1.
 \end{equation}Note that by the choice of $h$, we have $h^mR_m\subset \mathcal{F}_v^{tm}R_m $ and for $m\geq t$, we have 
 \begin{equation}\label{eq--inclusion to high vanihsing}
     \quad h^mR_{m+1}\subset \mathcal{F}_v^{t(m+1)}R_{m+1}. 
 \end{equation}
 Therefore by choosing $m$ large, we get 
 \begin{equation}
     (\vv{0},1)=(\vv{\mathrm{val}}(h^mf_{m+1}), m+1)-(\vv{\mathrm{val}}(h^mf_{m}), m)
 \end{equation}is in the group generated by $\Gamma'$ (with a possibly larger $N$). Again, since $L$ is relatively ample, one can directly check that for each $z_i$, there are  $g_i, s_i\in R_{m_i}$  such that 
 \begin{equation}
     \mathrm{val}(z_i)=\mathrm{val}(g_i)-\mathrm{val}(s_i)
 \end{equation}and then we consider 
 \begin{equation}
     g_i'=h^{m_i}g_i, \text{ and } s_i'=h^{m_i}s_i,
 \end{equation}which are both contained in $\mathcal F_v^{tm_i}R_{m_i}$. Then we know that 
 \begin{equation}\label{eq--generate as a group}
     (e_i, 0)=(\vv{\mathrm{val}}(g_i'), m_i)-(\vv{\mathrm{val}}(s_i'),m_i)
 \end{equation}is in the group generated by $\Gamma'$ (with a possibly larger $N$), where $\{e_i\}_{i=1}^n$ is the standard basis of $\mathbb Z^{n}$. This shows that Condition (3) holds so one can apply Theorem \ref{okounkov body} to get the conclusion.

\section{Proof of Lemma \ref{lem--futaki is well-defined}}
When $\dim Y=0$ this is proved by \cite{BermanW} and it also follows from a more general result in  \cite{BLLZ2023} using Okounkov bodies.  In the following, we assume $\dim Y>0$ and we use Okounkov bodies as in \cite{BLLZ2023}.
We only deal with \eqref{eq--volume is differentiable} and a similar argument works for \eqref{eq-second order derivative}.
Let $\Lambda_{m}=\{\alpha \in\mathrm{Lie}(\mathbb T)^*\mid R_{m,\alpha}\neq 0\}$. In particular, $\Lambda_{0}$ is the weight lattice of the $\mathbb T$-action on $Y$.

Since $-K_X$ is relatively ample and $\xi$ is a polarization, there exist positive constants $A, B$ independent of $m\geq 0$, such that for $\alpha\in \Lambda_m$, we have
	\begin{equation}\label{eq--support of the measure}
	\langle\alpha,\xi\rangle\geq -A m, \text{ and }\left|\langle\alpha,\eta\rangle\right|\leq A \langle\alpha,\xi\rangle+Bm.
	\end{equation}Moreover, similar to \eqref{estimate for the length}, we have the following uniform dimension upper bound for $\lambda\geq 0$ and $m\geq 1$: 
\begin{equation}\label{dimension estimate}
     \sum_{\langle\alpha, \xi\rangle \leq \lambda m}\dim_{\mathbb C}(R_{m,\alpha})\leq C(\lambda+1)^nm^n.
 \end{equation}
We have a sequence of Radon measures on $\mathbb R^2$ given by\begin{equation}
\mathrm{DH}_m(\xi,\eta):=\frac{1}{m^n}\sum_{\alpha \in \Lambda_m}\dim_{\mathbb C}(R_{m,\alpha})\delta_{\left(\langle\frac{\alpha}{m},\xi\rangle,\langle\frac{\alpha}{m},\eta\rangle\right)}.
\end{equation}
Consider functions $H_m: \mathbb R^2\rightarrow \mathbb R$ defined by 
\begin{equation}
	H_m(x,y)=\frac{1}{m^n}\dim_{\mathbb C}\left(\mathcal F^{ym}_{\mathrm{wt}_\eta}R_m/\mathcal F^{xm}_{\mathrm{wt}_\xi}R_m\cap\mathcal F^{ym}_{\mathrm{wt}_\eta}R_m\right).
\end{equation} Then we obtain 
$\mathrm{DH}_m(\xi,\eta)=-\frac{\p^2}{\p x\p y}H_m$ in the sense of distributions.
 Using \eqref{dimension estimate} and some elementary arguments, to prove \eqref{eq--volume is differentiable}, it suffices to show that the right-hand side limit in \eqref{eq--volume is differentiable}
exists. It is clear that the limit depends linearly on $\eta$, so we may assume that there exists an $\alpha_\eta\in \Lambda_0$ such that $\langle\alpha_\eta,\eta\rangle>0$, i.e., there exists $f_{\eta}\in R_{\alpha_{\eta}}$ such that 
	\begin{equation}
		\mathrm{wt}_{\eta}(f_{\eta})>0.
	\end{equation}
As in the proof of Proposition \ref{lem-convergence of measures}, we again appeal to Theorem \ref{okounkov body}. One can extend $\mathrm{wt}_\xi$ to be a rank $n$ valuation $\vv{\mathrm {val}}$ and obtain 
\begin{equation}\label{computation for H-m}
     H_m(x,y)=\# \Gamma_m-\# \Gamma'_m,
 \end{equation}where $\Gamma$ and $\Gamma'$ are two sub-semigroups of 
$\mathbb N^{n+1}$ given by
 \begin{equation}\label{subsemigroups}
     \begin{aligned}
         &\Gamma=\{(d_1, \cdots, d_n, m)\mid (d_1, \cdots, d_n)\in\vv{\mathrm{val}}(\mathcal F^{ym}_{\mathrm{wt}_\eta}R_m\cap \bigoplus_{\langle{\alpha},\xi\rangle\leq Nm(|x|+1)}	R_{m,\alpha})\},\\
         &\Gamma'=\{(d_1, \cdots, d_n, m)\mid (d_1, \cdots, d_n)\in \vv{\mathrm{val}}(\mathcal F^{xm}_{\mathrm{wt}_\xi}R_m\cap\mathcal F^{ym}_{\mathrm{wt}_\eta}R_m\cap \bigoplus_{\langle{\alpha},\xi\rangle\leq Nm(|x|+1)}	R_{m,\alpha})\}.
     \end{aligned}
 \end{equation} 
It is clear that they satisfy Condition (1) and (2) of Theorem \ref{okounkov body}. Using $f_\eta$, one can choose $N\gg1$, and argue as in \eqref{eq--holomorphic with zero valuation}--\eqref{eq--generate as a group} to see that Condition (3) also holds.
	Then as in the proof of Proposition \ref{lem-convergence of measures}, the dominated convergence theorem implies the measure $\mathrm{DH}_m(\xi,\eta)$ converges weakly to a unique Radon measure $\mathrm{DH}(\xi,\eta)$ on $\mathbb R^2$. Moreover, by \eqref{eq--support of the measure} the support of $\mathrm{DH}(\xi,\eta)$ is contained in the region 
	\begin{equation*}
 		\{(x,y)\mid x\geq -A\text{ and }  |y|\leq Ax+B\}.
 	\end{equation*}
Then by \eqref{dimension estimate}, one can easily show that the right-hand side limit in \eqref{eq--volume is differentiable}
exists and equals $$-\int_{\mathbb R^2}ye^{-x}\mathrm{DH}(\xi, \eta).$$
  Lastly, we verify that the second derivative in \eqref{eq-second order derivative} is always positive. Indeed since $\eta$ is nonzero, up to replacing $\eta$ by $-\eta$, we can find $n$ algebraically independent sections $s_i\in R_{m_i,\alpha_i}$ such that 
  \begin{equation}
      \langle\frac{\alpha_i}{m},\eta\rangle>0.
  \end{equation}Using these sections, it is easy to see the right-hand side of \eqref{eq-second order derivative} is positive.

\bibliographystyle{alpha}
\bibliography{ref.bib}

\end{document}